\theoremstyle{plain}
\newtheorem{thm}{Theorem}
\newtheorem{lem}[thm]{Lemma}
\newtheorem{prop}[thm]{Proposition}
\newtheorem{ex}[thm]{Example}
\begin{document}
\title{Banach-Stone Theorems for maps preserving common zeros}

\begin{abstract}
Let $X$ and $Y$ be completely regular spaces and $E$ and $F$ be Hausdorff
topological vector spaces. We call a linear map $T$ from a subspace of
$C(X,E)$ into $C(Y,F)$ a \emph{Banach-Stone map} if it has the form $Tf(y) =
S_{y}(f(h(y))$  for a family of linear operators $S_{y} : E \to F$, $y \in Y$,
and a function $h: Y \to X$. In this paper, we consider maps having the
property:
\begin{equation}
\cap^{k}_{i=1}Z(f_{i}) \neq\emptyset\iff\cap^{k}_{i=1}Z(Tf_{i}) \neq
\emptyset,\tag{Z}%
\end{equation}
where $Z(f) = \{f  = 0\}$. We characterize linear bijections with property
(Z) between spaces of continuous functions, respectively, spaces of
differentiable functions (including $C^{\infty}$), as Banach-Stone maps. In
particular, we confirm a conjecture of Ercan and \"{O}nal:

Suppose that $X$ and $Y$ are realcompact spaces and $E$ and $F$ are Hausdorff
topological vector lattices (respectively, $C^{*}$-algebras). Let $T: C(X,E)
\to C(Y,F)$ be a vector lattice isomorphism (respectively, $*$-algebra
isomorphism) such that
\[
Z(f) \neq\emptyset\iff Z(Tf) \neq\emptyset.
\]
Then $X$ is homeomorphic to $Y$ and $E$ is lattice isomorphic (respectively,
$C^{*}$-isomorphic) to $F$.

Some results concerning the continuity of $T$ are also obtained.

\end{abstract}
\author{Denny H. Leung}
\address{Department of Mathematics, National University of Singapore, 2 Science Drive
2, Singapore 117543.}
\email{matlhh@nus.edu.sg}
\author{Wee-Kee Tang}
\address{Mathematics and Mathematics Education, National Institute of Education \\
Nanyang Technological University, 1 Nanyang Walk, Singapore 637616.}
\email{weekee.tang@nie.edu.sg}
\thanks{Research of the first author was partially supported by AcRF project no.\ R-146-000-086-112}
\maketitle

%%%%%%%%%%%%%%%%%%%%%%%%%%%%%%%%%%%%%%%%%%%%%%%%%%%%%

\section{Introduction}

Let $X$ and $Y$ be compact Hausdorff spaces. A linear map $T:C(X)\rightarrow
C(Y)$ is a surjective isometry if and only if it has the form $Tf=w\cdot
(f\circ h)$ for some homeomorphism $h:Y\rightarrow X$ and some function $w\in
C(Y)$ such that $|w(y)|=1$ for all $y$. This classical result, called the
Banach-Stone Theorem, was obtained by Banach \cite{B} for compact metric
spaces and extended by Stone \cite{St} to compact Hausdorff spaces. Variations
of this result were obtained by Gelfand and Kolmogorov \cite{GK} for algebraic
isomorphisms and Kaplansky \cite{K} for vector lattice isomorphisms. There has
been a great deal of development in this area in the intervening period,
including extensions to spaces of vector-valued functions. Let us mention in
particular the work on Behrends \cite{Be} on $M$-structures and the
Banach-Stone property and the theory of separating and biseparating maps (see,
e.g., \cite{A1,A2,A2-1,A3, ABN, GJW, JW, LW}). We refer to the article \cite{GJ2}
for a survey on the many results and research areas arising from the
Banach-Stone Theorem.

In a recent article, Ercan and \"{O}nal obtained a Banach lattice-valued
version of the Banach-Stone Theorem.

\begin{thm}
\emph{(Ercan and \"{O}nal \cite{EO})} Let $X$ and $Y$ be compact Hausdorff
spaces and $E$ and $F$ be Banach lattices. If there is an onto vector lattice
isomorphism $T: C(X,E) \to C(Y,F)$ such that
\begin{equation}
Z(f) = \{f = 0\} \neq\emptyset\iff Z(Tf) \neq\emptyset,\tag{Z$_0$}%
\end{equation}
and $F$ is an $AM$-space with unit, then $X$ is homeomorphic to $Y$ and $E$
and $F$ are isomorphic as Banach lattices.
\end{thm}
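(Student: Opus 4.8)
The plan is to bootstrap the weak hypothesis (Z$_0$) up to the full property (Z) and then feed $T$ into the Banach-Stone representation for linear bijections with property (Z) that this paper establishes.

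First I would upgrade (Z$_0$). Given $f_1,\dots,f_k\in C(X,E)$, put $g=\lvert f_1\rvert\vee\cdots\vee\lvert f_k\rvert$. Since the lattice operations of a Banach lattice are continuous, $g\in C(X,E)$, and clearly $Z(g)=\bigcap_{i=1}^{k}Z(f_i)$. Because $T$ is a vector lattice homomorphism, $Tg=\lvert Tf_1\rvert\vee\cdots\vee\lvert Tf_k\rvert$, whence $Z(Tg)=\bigcap_{i=1}^{k}Z(Tf_i)$. Applying (Z$_0$) to $g$ gives $\bigcap_i Z(f_i)\neq\emptyset\iff\bigcap_i Z(Tf_i)\neq\emptyset$; that is, $T$ is an onto linear map between spaces of continuous functions on compact (hence realcompact) spaces having property (Z).

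Next I would invoke the Banach-Stone characterization: such a $T$ is a Banach-Stone map, so there are a homeomorphism $h:Y\to X$ and linear bijections $S_y:E\to F$, $y\in Y$, with $Tf(y)=S_y\bigl(f(h(y))\bigr)$ for all $f$ and $y$; in particular $X$ is homeomorphic to $Y$. To identify $E$ and $F$, I would evaluate on constants: writing $c_e\in C(X,E)$ for the function identically equal to $e$, one has $Tc_e(y)=S_y(e)$, and since $T$ commutes with the modulus while $\lvert c_e\rvert=c_{\lvert e\rvert}$, evaluating at $y$ gives $S_y(\lvert e\rvert)=\lvert S_y(e)\rvert$ for every $e\in E$. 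A linear bijection intertwining the moduli is a vector lattice isomorphism; and since positive operators between Banach lattices are automatically bounded, both $S_y$ and $S_y^{-1}$ are continuous. Hence $S_y:E\to F$ is a Banach lattice isomorphism, and $E$ and $F$ are isomorphic as Banach lattices.

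The genuine work is the middle step --- producing the Banach-Stone representation from property (Z) alone. One must reconstruct the points of $X$ intrinsically from the zero-set structure of $C(X,E)$, match them through $T$ with the points of $Y$, show that the resulting correspondence $h$ is a well-defined homeomorphism, and verify that each fibre operator $S_y$ is a genuine linear bijection $E\to F$ rather than merely a linear map. Carrying this out in the vector-valued setting, over arbitrary realcompact $X$ and $Y$, and without assuming that $F$ is an $AM$-space with unit, is the substantive obstacle; the first and last steps above are essentially formal once the representation is in hand. It is precisely in securing the representation that Ercan and \"{O}nal used the $AM$-space-with-unit hypothesis, which the present framework shows to be dispensable.
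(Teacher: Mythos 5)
Your proposal is correct and follows essentially the same route as the paper, which obtains this statement as a special case of its Theorem \ref{lattice}: upgrade (Z$_0$) to (Z) via $g=|f_1|\vee\cdots\vee|f_k|$ and the lattice-homomorphism property, invoke the representation theorem (Theorem \ref{thm4}) to get $Tf(y)=S_y(f(h(y)))$ with $h$ a homeomorphism, and deduce from $S_y(|u|)=|S_y(u)|$ that each $S_y$ is a lattice isomorphism. Your added remark that positive bijections between Banach lattices are automatically bounded is the standard way to pass from ``vector lattice isomorphism'' to ``Banach lattice isomorphism,'' and is consistent with what the paper leaves implicit.
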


They further conjectured that the result holds for any Banach lattice $F$. It
is easily observed that for a vector lattice isomorphism, condition (Z$_{0}$)
is equivalent to
\begin{equation}
\cap_{i=1}^{k}Z(f_{i})\neq\emptyset\Longleftrightarrow\cap_{i=1}^{k}%
Z(Tf_{i})\neq\emptyset\tag{Z}%
\end{equation}
for any finite collection of functions $(f_{i})_{i=1}^{k}$. Maps satisfying
(Z) are termed \emph{maps preserving common zeros}. In this paper, we
undertake a general investigation of linear operators preserving common zeros
mapping between certain spaces of vector-valued functions. As a result, we are
able to confirm the conjecture of Ercan and \"{O}nal in a general setting. We
also obtain Banach-Stone type results for mappings between spaces of
vector-valued differentiable functions and some theorems on the automatic
continuity of such maps.  Professor N.\ C.\ Wong has informed us that he and his co-authors have independently solved the conjecture of Ercan and \"{O}nal \cite{CCW}.  The conjecture was also solved by its proposers Ercan and \"{O}nal \cite{EO2}.

In \S 2, there is a study of maps preserving common zeros under rather general
conditions. Theorem \ref{thm2} gives a description of such maps, which shows
that they are almost of Banach-Stone form. In \S 3, we specialize to the case
of vector-valued continuous functions, particularly on realcompact spaces. In
this case, Theorem \ref{thm4} provides a complete characterization of maps
preserving common zeros. The conjecture of Ercan and \"{O}nal \cite{EO}
follows as a corollary. In \S 4, we study maps preserving common zeros mapping
between spaces of vector-valued differentiable functions. We are able to
characterize such maps as Banach-Stone maps under the general assumption that
$E$ and $F$ are Hausdorff topological vector spaces (Theorem \ref{thm8}). Our
result also holds for $C^{\infty}$-functions. In the final section, we
investigate the continuity properties of the maps considered in \S 4. Even
without assuming completeness of $E$ and $F$, continuity of the associated map
$\Phi$ can be obtained for maps between $C^{m}$-spaces when $m \in{\mathbb{N}%
}$. However, this is not longer true if $m = \infty$. To obtain full
continuity of the map $T$ requires completeness of $E$ and $F$. These results,
summarized in Theorem \ref{Unified} and Examples \ref{Exm} and
\ref{Example_infinity}, serve to clarify the role played by completeness in
theorems regarding automatic continuity.

\section{Maps preserving common zeros}

\label{sec1}

If $X$ is a (Hausdorff) completely regular space and $E$ is a Hausdorff
topological vector space, let $C(X,E)$ be the set of all continuous functions
from $X$ into $E$. It is known that $E$ is completely regular \cite{S}. In
particular, every $f\in C(X,E)$ has a unique continuous extension $f^{\beta
}:\beta X\rightarrow\beta E$, where $\beta X$ and $\beta E$ are the
Stone-\u{C}ech compactifications of $X$ and $E$ respectively. The space
$C(X,{\mathbb{R}})$ or $C(X,{\mathbb{C}}),$ as the case may be, is abbreviated
to $C(X)$. A vector subspace $A(X)$ of $C(X)$ is said to be \emph{almost
normal} if for every pair of subsets $P$ and $Q$ of $X$ such that
$\overline{P}^{\beta X}\cap\overline{Q}^{\beta X}=\emptyset$, there exists $f$
in $A(X)$ such that $f(P)\subseteq\{0\}$ and $f(Q)\subseteq\{1\}$. A vector
subspace $A(X,E)$ of $C(X,E)$ is said to be \emph{almost normally
multiplicative} if $A(X,E)$ contains the constant functions and there is an
almost normal subspace $A(X)$ of $C(X)$ so that $\varphi\cdot f\in A(X,E)$
whenever $\varphi\in A(X)$ and $f\in A(X,E)$. For any $f\in C(X,E)$, let
$Z(f)=\{x\in X:f(x)=0\}$ be the \emph{zero set of} $f$. If $u\in E$, let
$\mathbf{u}$ be the function in $C(X,E)$ with constant value $u$.

In this paper, $X$ and $Y$ will always denote (Hausdorff) completely regular
spaces and $E$ and $F$ (nontrivial) Hausdorff topological vector spaces. If
$A(X,E)$ and $A(Y,F)$ are subspaces of $C(X,E)$ and $C(Y,F)$ respectively, a
linear map $T:A(X,E)\rightarrow A(Y,F)$ is said to \emph{preserve common
zeros} if for any $k\in{\mathbb{N}}$ and any sequence $(f_{i})_{i=1}^{k}$ in
$A(X,E)$,
\begin{equation}
\cap_{i=1}^{k}Z(f_{i})\neq\emptyset\Longleftrightarrow\cap_{i=1}^{k}%
Z(Tf_{i})\neq\emptyset. \tag{Z}%
\end{equation}

For the rest of the section, $A\left(  X,E\right)  $ and $A\left(  Y,F\right)
$ will denote almost normally multiplicative vector subspaces of $C\left(
X,E\right)  $ and $C\left(  Y,F\right)  $ respectively and $A\left(  X\right)
$ and $A\left(  Y\right)  $ are the corresponding almost normal subspaces of
$C\left(  X\right)  $ and $C\left(  Y\right)  $.

\begin{prop}
\label{Zx}Suppose that $T:A(X,E)\rightarrow A(Y,F)$ is a surjective linear map
that preserves common zeros. Then there exist a dense subset $Z$ of $\beta Y$
and a homeomorphism $h:Z\rightarrow X$ so that, for all $y\in Z$ and all $f\in
A(X,E)$, there is a net $\left(  y_{\alpha}\right)  $ in $Y$ converging to $y$
such that
\begin{equation}
\left(  Tf\right)  \left(  y_{\alpha}\right)  =\left(  T\mathbf{u}\right)
\left(  y_{\alpha}\right)  \text{ for all }\alpha,\text{ where }u=f\left(
h\left(  y\right)  \right)  . \label{eq1}%
\end{equation}
In particular, $(Tf)^{\beta}(y)=(T\mathbf{u})^{\beta}(y).$
\end{prop}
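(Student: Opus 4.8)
The plan is to extract the homeomorphism $h$ and the dense set $Z$ from the combinatorics of common zero sets, then verify the pointwise formula \eqref{eq1} by a separation argument. First I would set up the correspondence between points of $\beta X$ and $\beta Y$ at the level of zero sets. For each $f \in A(X,E)$ consider $\overline{Z(f)}^{\beta X}$; the property (Z) says exactly that a finite family $(f_i)$ has a common zero in $X$ iff $(Tf_i)$ does in $Y$. I would first upgrade this to: $\bigcap_i \overline{Z(f_i)}^{\beta X} \neq \emptyset$ iff $\bigcap_i \overline{Z(Tf_i)}^{\beta Y} \neq \emptyset$. This uses compactness of $\beta X$, $\beta Y$ (so a family of closed sets with the finite intersection property has nonempty intersection) together with (Z) applied to finite subfamilies, plus the almost normality of $A(X)$ to produce, for disjoint closures, a scalar function realizing the Urysohn-type separation; multiplying an arbitrary $g \in A(X,E)$ by such scalar functions lets one refine zero sets and conclude that the zero sets $\{\overline{Z(f)}^{\beta X} : f \in A(X,E)\}$ form a rich enough family. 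The key point is that almost normal multiplicativity guarantees that for any closed $G_\delta$-ish behaviour at the level of $\beta X$ we have enough functions vanishing precisely where we want.

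Next I would define $h$. For $y \in \beta Y$, look at the filter of sets $\overline{Z(Tf)}^{\beta Y}$ containing $y$; its preimage under $T$ gives a filter of sets $\overline{Z(f)}^{\beta X}$ in $\beta X$, which by compactness has nonempty intersection. I would let $Z$ be the set of those $y \in \beta Y$ for which this intersection is a single point $x$, and set $h(y) = x$. The heart of the matter is to show $Z$ is dense in $\beta Y$ and $h: Z \to X$ is a well-defined homeomorphism onto $X$. Density and single-valuedness both come from almost normality: if two distinct points $x_1, x_2$ of $\beta X$ were both in the intersection, pick $f \in A(X)$ (times a fixed nonzero constant vector) separating them, deriving a contradiction with (Z); for density, given a nonempty open $V \subseteq \beta Y$ one uses separation on the $Y$ side together with surjectivity of $T$ to locate a point of $Z$ inside $V$. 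That $h$ maps into $X$ (not merely $\beta X \setminus X$) follows because $A(X,E)$ contains the constant functions: $Z(\mathbf{u}) = \emptyset$ for $u \neq 0$ forces, via (Z), that $T\mathbf{u}$ has empty zero set, and running this through the filter construction pins the intersection point inside $X$ itself. Continuity of $h$ and of $h^{-1}$ is then a routine filter/net argument, and bijectivity onto $X$ uses surjectivity of $T$ in one direction and (Z) in the other.

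To prove the pointwise identity, fix $y \in Z$, $f \in A(X,E)$, and set $x = h(y)$, $u = f(h(y))$. Consider $g = f - \mathbf{u} \in A(X,E)$; then $x \in Z(g)$, so $h(y)$ lies in $\overline{Z(g)}^{\beta X}$, hence by the defining property of $Z$ and $h$ we get $y \in \overline{Z(Tg)}^{\beta Y}$, i.e. $(Tg)^\beta(y) = 0$, which gives $(Tf)^\beta(y) = (T\mathbf{u})^\beta(y)$ — the ``in particular'' clause. To get the stronger net statement \eqref{eq1}, that $Tf$ and $T\mathbf{u}$ actually agree on a net in $Y$ converging to $y$ (not just in the $\beta$-extension), I would argue that $y \in \overline{Z(Tf - T\mathbf{u})}^{\beta Y} = \overline{Z(T(f-\mathbf{u}))}^{\beta Y}$, and $Z(T(f-\mathbf{u})) \subseteq Y$, so by definition of the closure in $\beta Y$ there is a net in $Z(T(f - \mathbf{u})) \subseteq Y$ converging to $y$; on that net $Tf = T\mathbf{u}$ by linearity. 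Here one must be slightly careful because $T(f - \mathbf{u}) = Tf - T\mathbf{u}$ so its zero set is precisely where $Tf$ and $T\mathbf{u}$ coincide.

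The main obstacle I anticipate is the construction and analysis of $Z$ and $h$ — specifically, proving that the $T$-preimage filter of a point really collapses to a single point of $X$ on a dense set, and that this assignment is a homeomorphism rather than merely a closed relation. Getting full single-valuedness and density simultaneously from almost normality, while keeping track of the distinction between $X$ and $\beta X$ (only constant functions save us from the intersection escaping to the corona), is the delicate part; the verification of \eqref{eq1} afterwards is comparatively mechanical once the machinery is in place.
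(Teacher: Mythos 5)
Your overall architecture (build $h$ from zero-set combinatorics via compactness and almost normality, then obtain (\ref{eq1}) from $y\in\overline{Z(T(f-\mathbf{u}))}^{\beta Y}$ by choosing a net in $Z(T(f-\mathbf{u}))\subseteq Y$) is the right one, and that last step is exactly how the paper argues. But your construction of $h$ runs in the wrong direction, and this creates two genuine gaps. First, your ``upgrade'' of (Z) to $\bigcap_i\overline{Z(f_i)}^{\beta X}\neq\emptyset\iff\bigcap_i\overline{Z(Tf_i)}^{\beta Y}\neq\emptyset$ is not justified: for $E$-valued functions the sets $Z(f_i)$ are merely closed sets, so they can have empty intersection in $X$ while their closures in $\beta X$ still meet, and (Z) only speaks about actual common zeros in $X$ and $Y$. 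Consequently, for a point $y\in\beta Y$ the family $\{\overline{Z(f)}^{\beta X}: y\in\overline{Z(Tf)}^{\beta Y}\}$ is not obviously a filter base: you cannot extract its finite intersection property from (Z), because $y\in\bigcap_i\overline{Z(Tf_i)}^{\beta Y}$ does not hand you a common zero of the $Tf_i$ in $Y$.

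Second, even granting that your intersection collapses to a single point $x=h(y)\in X$, the implication you invoke to prove (\ref{eq1}) goes the wrong way. You need: if $g(h(y))=0$ then $y\in\overline{Z(Tg)}^{\beta Y}$. Your definition only yields the converse, namely that $y\in\overline{Z(Tg)}^{\beta Y}$ forces $h(y)\in\overline{Z(g)}^{\beta X}$; knowing $h(y)\in Z(g)$ does not place $g$ in your defining filter, so the phrase ``by the defining property of $Z$ and $h$'' does not deliver the conclusion. The paper sidesteps both problems by starting from the $X$ side: for each $x\in X$ it defines $Z_x=\bigcap_{f(x)=0}\overline{Z(Tf)}^{\beta Y}$, whose finite intersection property is automatic because the relevant $f_i$ genuinely share the zero $x$ (so by (Z) the $Tf_i$ share a zero in $Y$), and for which the implication needed for (\ref{eq1}) holds by definition. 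It then proves $|Z_x|=1$ and $Z_{x_1}\cap Z_{x_2}=\emptyset$ using almost normality, surjectivity and (Z), much as you sketch, and sets $Z=\bigcup_x Z_x$, which also makes surjectivity of $h$ onto $X$ automatic. You should rebuild your correspondence in that direction.
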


By (Z) and the compactness of $\beta Y$, the set
\[
Z_{x}=%
%TCIMACRO{\tbigcap \limits_{\substack{f\in A\left(  X,E\right)  \\f\left(
%x\right)  =0}}}%
%BeginExpansion
{\textstyle\bigcap\limits_{\substack{f\in A\left(  X,E\right)  \\f\left(
x\right)  =0}}}
%EndExpansion
\overline{Z\left(  Tf\right)  }^{\beta Y}%
\]
is nonempty for all $x\in X$. Before giving the proof of Proposition \ref{Zx},
we first establish a number of lemmas.

\begin{lem}
\label{LFormula}If $y\in Z_{x},$ then for all $f\in A\left(  X,E\right)  ,$
there exists a net $\left(  y_{\alpha}\right)  \subseteq Y$ converging to $y$
such that
\[
\left(  Tf\right)  \left(  y_{\alpha}\right)  =\left(  T\mathbf{u}\right)
\left(  y_{\alpha}\right)  \text{ for all }\alpha,
\]
where $u=f\left(  x\right)  .$ As a result,
\[
\left(  Tf\right)  ^{\beta}\left(  y\right)  =\left(  T\mathbf{u}\right)
^{\beta}\left(  y\right)  .
\]

\end{lem}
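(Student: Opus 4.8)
The statement to prove is Lemma \ref{LFormula}: for $y \in Z_x$ and $f \in A(X,E)$ with $u = f(x)$, there is a net $(y_\alpha) \subseteq Y$ converging to $y$ with $(Tf)(y_\alpha) = (T\mathbf{u})(y_\alpha)$ for all $\alpha$, and consequently $(Tf)^\beta(y) = (T\mathbf{u})^\beta(y)$. The natural idea is to apply property (Z) to the single function $g := f - \mathbf{u} \in A(X,E)$ together with the family of functions defining $Z_x$. Note $g(x) = 0$, so $x \in Z(g)$. I want to show $y \in \overline{Z(Tg)}^{\beta Y}$; once I have that, since $Tg = Tf - T\mathbf{u}$ by linearity, the set $Z(Tg) = \{y' \in Y : (Tf)(y') = (T\mathbf{u})(y')\}$, and $y$ being in its $\beta Y$-closure gives exactly a net $(y_\alpha)$ in $Y$ converging to $y$ with $(Tf)(y_\alpha) = (T\mathbf{u})(y_\alpha)$ for all $\alpha$. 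The final assertion then follows by continuity of the $\beta$-extensions: $(Tf)^\beta(y) = \lim (Tf)(y_\alpha) = \lim (T\mathbf{u})(y_\alpha) = (T\mathbf{u})^\beta(y)$ in $\beta F$.

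So the crux is: $y \in Z_x$ implies $y \in \overline{Z(Tg)}^{\beta Y}$. Suppose not; then there is a basic neighborhood of $y$ in $\beta Y$, given by finitely many functions in $A(Y)$ (using that $A(Y)$ is almost normal, hence one can separate $y$ from the complement by a $\beta$-extendable function), missing $\overline{Z(Tg)}^{\beta Y}$. Concretely, pick $\varphi \in A(Y)$ with $\varphi^\beta(y) = 1$ and $\varphi$ vanishing on a set whose $\beta Y$-closure contains $Z(Tg)$ — this is where almost normality of $A(Y)$ is used, separating $Z(Tg)$'s closure from $y$. Then consider the finite family consisting of $\varphi \cdot Tf$ (or rather I should work on the $X$ side and pull back). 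Let me restructure: the cleaner route is to argue directly with (Z). Take any finite set $f_1, \dots, f_k \in A(X,E)$ with $f_i(x) = 0$ for all $i$; then $x \in \cap_i Z(f_i)$, so by (Z), $\cap_i Z(Tf_i) \neq \emptyset$, which means $\cap_i \overline{Z(Tf_i)}^{\beta Y} \neq \emptyset$, and by the finite intersection property over all such families (compactness of $\beta Y$) we get $Z_x \neq \emptyset$ — but that is the remark preceding the lemma, not the lemma itself.

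To prove the lemma, I add $g = f - \mathbf{u}$ to the game. Fix any neighborhood $V$ of $y$ in $\beta Y$. I must produce a point of $Z(Tg) \cap Y$ in $V$. Choose $\psi \in A(Y)$ with $\psi^\beta(y) = 1$ and $\overline{\psi^{-1}(0)}^{\beta Y} \cup V = \beta Y$, i.e. $\psi$ vanishes outside $V$ (possible by almost normality of $A(Y)$, separating $\{y\}$ from the closed set $\beta Y \setminus V'$ for a slightly smaller $V'$). Now $\psi \cdot Tg \in A(Y,F)$ since $A(Y,F)$ is almost normally multiplicative. Because $T$ is surjective, write $\psi \cdot Tg = T(g')$ for some $g' \in A(X,E)$ — hmm, this needs care since I want to relate $Z(g')$ back to $x$. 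Actually the cleaner approach: apply (Z) to the family $\{f - \mathbf{u}\} \cup \{f_1, \dots, f_k\}$ where $f_i(x) = 0$; since $x$ is a common zero of all of them, $\cap Z(T f_i) \cap Z(Tg) \neq \emptyset$. Taking $\beta Y$-closures and intersecting over all finite families $(f_i)$ with $f_i(x)=0$, compactness of $\beta Y$ gives $\overline{Z(Tg)}^{\beta Y} \cap Z_x \neq \emptyset$. But I want $y$ itself, not just some point of $Z_x$, to lie in $\overline{Z(Tg)}^{\beta Y}$.

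**The main obstacle**, then, is upgrading "$\overline{Z(Tg)}^{\beta Y}$ meets $Z_x$" to "$\overline{Z(Tg)}^{\beta Y}$ contains the specific point $y$." I expect the resolution is a localization argument: replace each $Tf_i$ by $\psi \cdot Tf_i$ for a suitable $\psi \in A(Y)$ concentrated near $y$, or equivalently exploit that $y \in Z_x = \cap_{f(x)=0}\overline{Z(Tf)}^{\beta Y}$ means $y$ is already forced into the closure of $Z(Tf)$ for *every* such $f$, and $g + \varphi\cdot(\text{anything})$ with $\varphi(x)=0$ stays in the family. Precisely: for $\varphi \in A(X)$ with $\varphi(x) = 0$, the function $g + \varphi \cdot f'$ — no. Let me instead use: for any $\varphi \in A(X)$ with $\varphi(x) = 0$ and any $f' \in A(X,E)$, $\varphi f' $ vanishes at $x$, hence $\varphi f' $ is one of the functions defining $Z_x$. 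This will let me run a net/filter argument showing that the net realizing $y \in \overline{Z(Tg)}^{\beta Y}$ can be chosen inside $Y$, giving the genuine pointwise equality $(Tf)(y_\alpha) = (T\mathbf{u})(y_\alpha)$. The bookkeeping with the directed set of neighborhoods of $y$ crossed with finite families $(f_i)$ with $f_i(x)=0$ is the technical heart; once that net is in hand, passing to $\beta$-extensions is immediate from continuity.
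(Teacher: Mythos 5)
Your opening move is the right one and matches the paper's: set $g=f-\mathbf{u}$, observe $g(x)=0$, aim to show $y\in\overline{Z(Tg)}^{\beta Y}$, extract a net $(y_\alpha)$ in $Z(Tg)\subseteq Y$ converging to $y$, and pass to the $\beta$-extensions by continuity. But the step you single out as ``the crux'' and then as ``the main obstacle'' --- upgrading ``$\overline{Z(Tg)}^{\beta Y}$ meets $Z_x$'' to ``$y$ itself lies in $\overline{Z(Tg)}^{\beta Y}$'' --- is not an obstacle at all, and your proposal never actually closes it: you end by deferring the construction of the net to an unspecified ``localization argument'' whose ``bookkeeping'' you call the technical heart. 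As written, the proof is therefore incomplete at its central point.

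The missing observation is that it follows immediately from the definition of $Z_x$. Since $A(X,E)$ is a vector subspace containing the constants, $g=f-\mathbf{u}\in A(X,E)$, and $g(x)=0$, so $g$ is one of the functions indexing the intersection
\[
Z_x=\bigcap_{\substack{f'\in A(X,E)\\ f'(x)=0}}\overline{Z(Tf')}^{\beta Y}.
\]
Hence $Z_x\subseteq\overline{Z(Tg)}^{\beta Y}$ as sets, and $y\in Z_x$ gives $y\in\overline{Z(Tg)}^{\beta Y}$ directly. Property (Z) and the compactness of $\beta Y$ are only needed to show that $Z_x$ is nonempty (the remark preceding the lemma); for the lemma itself no further use of (Z), no almost normality of $A(Y)$, and no localization by cutoff functions is required. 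You appear to have conflated the finite-intersection-property argument for $Z_x\neq\emptyset$ with the definition of $Z_x$, which led you to re-derive (and then fail to complete) something that is true by fiat. With that one line inserted, the rest of your outline (the net lies in $Y$, $Tg=Tf-T\mathbf{u}$ by linearity, and the limit passage to $(Tf)^\beta(y)=(T\mathbf{u})^\beta(y)$) is correct and coincides with the paper's proof.
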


\begin{proof}
For any $f\in A\left(  X,E\right)  ,$ $\left(  f-\mathbf{u}\right)  \left(
x\right)  =0,$ where $u=f\left(  x\right)  .$ Hence $y\in Z_{x}\subseteq
\overline{Z\left(  T(f-\mathbf{u})\right)  }^{\beta Y}$. Thus there exists
$\left(  y_{\alpha}\right)  \subseteq Z\left(  T(f-\mathbf{u})\right)  $
converging to $y.$ Then%
\[
\left(  Tf\right)  \left(  y_{\alpha}\right)  =\left(  T\mathbf{u}\right)
\left(  y_{\alpha}\right)  \text{ for all }\alpha.
\]
Taking limits yield
\[
\left(  Tf\right)  ^{\beta}\left(  y\right)  =\left(  T\mathbf{u}\right)
^{\beta}\left(  y\right)  .
\]

\end{proof}

\begin{lem}
\label{Disjoint}If $x_{1}\neq x_{2},$ then $Z_{x_{1}}\cap Z_{x_{2}}%
=\emptyset.$
\end{lem}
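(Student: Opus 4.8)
The plan is to argue by contradiction: assume there exists $y\in Z_{x_{1}}\cap Z_{x_{2}}$ with $x_{1}\neq x_{2}$, and deduce that $g^{\beta}(y)=0$ for \emph{every} $g\in A(Y,F)$ — which is impossible because $A(Y,F)$ contains the nonzero constant functions. Surjectivity of $T$ and the presence of constants are exactly what drive this; property (Z) is used only through the already-established Lemma \ref{LFormula}.

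First I would separate the two points. Since $X$ is Hausdorff and completely regular, $x_{1}$ and $x_{2}$ are distinct points of the compact Hausdorff space $\beta X$, so $\overline{\{x_{1}\}}^{\beta X}\cap\overline{\{x_{2}\}}^{\beta X}=\{x_{1}\}\cap\{x_{2}\}=\emptyset$. Almost normality of $A(X)$ then produces $\varphi\in A(X)$ with $\varphi(x_{1})=0$ and $\varphi(x_{2})=1$. For any $u\in E$, set $f_{u}:=\mathbf{u}-\varphi\mathbf{u}$; this lies in $A(X,E)$ because $A(X,E)$ contains the constant functions and is closed under multiplication by elements of $A(X)$, and one computes $f_{u}(x_{1})=u$ and $f_{u}(x_{2})=0$.

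Now apply Lemma \ref{LFormula} to $f_{u}$ twice, using that $y$ lies in both $Z_{x_{1}}$ and $Z_{x_{2}}$: at $x_{1}$ it gives $(Tf_{u})^{\beta}(y)=(T\mathbf{u})^{\beta}(y)$, and at $x_{2}$ it gives $(Tf_{u})^{\beta}(y)=(T\mathbf{0})^{\beta}(y)=0$. Hence $(T\mathbf{u})^{\beta}(y)=0$ for every $u\in E$. Feeding this back into Lemma \ref{LFormula} at $x_{1}$: for an arbitrary $f\in A(X,E)$, writing $w=f(x_{1})$, we get $(Tf)^{\beta}(y)=(T\mathbf{w})^{\beta}(y)=0$. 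Since $T$ is surjective, every $g\in A(Y,F)$ is of the form $Tf$, so $g^{\beta}(y)=0$ for all $g\in A(Y,F)$. Choosing $v\in F$ with $v\neq 0$ (possible as $F$ is nontrivial), the constant function $\mathbf{v}\in A(Y,F)$ has continuous extension the constant map $v$, so $\mathbf{v}^{\beta}(y)=v\neq 0$, a contradiction. Therefore $Z_{x_{1}}\cap Z_{x_{2}}=\emptyset$.

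The one point requiring care — and the reason a more direct argument fails — is that the disjoint zero sets $Z(Tf_{u})$ and $Z(T\mathbf{u})$ in $Y$ need not have disjoint closures in $\beta Y$, since $Y$ is only completely regular; so property (Z) by itself does not visibly separate $Z_{x_{1}}$ from $Z_{x_{2}}$. Passing to the $\beta$-extensions and invoking surjectivity together with the constant functions is precisely what sidesteps this difficulty.
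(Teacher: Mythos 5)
Your proof is correct and follows essentially the same route as the paper's: use almost normality to produce $\varphi\in A(X)$ separating $x_{1}$ from $x_{2}$, apply Lemma \ref{LFormula} at both points to conclude $(Tf)^{\beta}(y)=0$ for every $f\in A(X,E)$, and contradict surjectivity via the nonzero constant functions in $A(Y,F)$. The only (cosmetic) difference is that you apply the cutoff to the constant $\mathbf{u}$ via $f_{u}=(1-\varphi)\mathbf{u}$ and first deduce $(T\mathbf{u})^{\beta}(y)=0$ for all $u$, whereas the paper applies Lemma \ref{LFormula} directly to $\varphi f$ for an arbitrary $f$; both are valid.
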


\begin{proof}
Suppose that $y_{0}\in Z_{x_{1}}\cap Z_{x_{2}}.$ Take any $f\in A\left(
X,E\right)  $ and let $u=f\left(  x_{1}\right)  .$ Since $y_{0}\in Z_{x_{1}},$
by Lemma \ref{LFormula},%
\[
\left(  Tf\right)  ^{\beta}\left(  y_{0}\right)  =\left(  T\mathbf{u}\right)
^{\beta}\left(  y_{0}\right)  .
\]
There exists $\varphi\in A\left(  X\right)  $ such that $\varphi\left(
x_{1}\right)  =1$ and $\varphi\left(  x_{2}\right)  =0.$ Since $\varphi f\in
A\left(  X,E\right)  $, we may apply Lemma \ref{LFormula} to $\varphi f$ (at
$x_{1}$) to obtain
\[
\left(  T\mathbf{u}\right)  ^{\beta}\left(  y_{0}\right)  =\left(  T\left(
\varphi f\right)  \right)  ^{\beta}\left(  y_{0}\right)  .
\]
However, since $y_{0}\in Z_{x_{2}},$ by Lemma \ref{LFormula} yet again,
$\left(  T\left(  \varphi f\right)  \right)  ^{\beta}\left(  y_{0}\right)
=\left(  T\mathbf{v}\right)  ^{\beta}\left(  y_{0}\right)  ,$ where
$v=\varphi\left(  x_{2}\right)  f\left(  x_{2}\right)  =0.$ Thus
\[
\left(  Tf\right)  ^{\beta}\left(  y_{0}\right)  =\left(  T\mathbf{v}\right)
^{\beta}\left(  y_{0}\right)  =0.
\]
This contradicts the surjectivity of $T,$ since $\mathbf{w\in}A\left(
Y,F\right)  $ for any $w\in F\smallsetminus\left\{  0\right\}  .$
\end{proof}

\begin{lem}
\label{Singleton}$\left\vert Z_{x}\right\vert =1$ for all $x\in X.$
\end{lem}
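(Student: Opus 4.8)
Since $Z_x$ is already known to be nonempty, it suffices to rule out two distinct points, so I would argue by contradiction and suppose $y_1,y_2\in Z_x$ with $y_1\neq y_2$. The first step is purely topological: since $\beta Y$ is compact Hausdorff, hence normal, pick open sets $V_1\ni y_1$, $V_2\ni y_2$ in $\beta Y$ with $\overline{V_1}^{\beta Y}\cap\overline{V_2}^{\beta Y}=\emptyset$; as $Y$ is dense, $\overline{V_i\cap Y}^{\beta Y}=\overline{V_i}^{\beta Y}$, so almost normality of $A(Y)$ provides $\varphi\in A(Y)$ with $\varphi\equiv 0$ on $V_2\cap Y$ and $\varphi\equiv 1$ on $V_1\cap Y$. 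I will use that $\varphi^{\beta}(y_1)=1\neq0$ and that $\{y\in Y:\varphi(y)\neq0\}\subseteq Y\setminus V_2$, whence $\overline{\{y\in Y:\varphi(y)\neq0\}}^{\beta Y}\subseteq\beta Y\setminus V_2$ does not contain $y_2$.

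Next, fix $w\in F\setminus\{0\}$ and set $g:=\varphi\cdot\mathbf{w}\in A(Y,F)$. By surjectivity choose $f$ with $Tf=g$ and put $u:=f(x)$. Lemma \ref{LFormula} applied at $y_1\in Z_x$ gives $(T\mathbf{u})^{\beta}(y_1)=(Tf)^{\beta}(y_1)=g^{\beta}(y_1)=\varphi^{\beta}(y_1)\,w=w\neq0$, so $T\mathbf{u}\neq0$ and hence $u\neq0$. Since $u\neq0$, $Z(\mathbf{u})=\emptyset$, so property (Z) forces $Z(T\mathbf{u})=\emptyset$; that is, $T\mathbf{u}$ vanishes nowhere on $Y$.

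Now consider $f-\mathbf{u}\in A(X,E)$: it vanishes at $x$, so $x\in Z(f-\mathbf{u})$, and therefore $y_2\in Z_x\subseteq\overline{Z(T(f-\mathbf{u}))}^{\beta Y}$. On the other hand $T(f-\mathbf{u})=g-T\mathbf{u}=\varphi\cdot\mathbf{w}-T\mathbf{u}$, and for any $y$ in its zero set we get $(T\mathbf{u})(y)=\varphi(y)w$; as $(T\mathbf{u})(y)\neq0$, this forces $\varphi(y)\neq0$. Hence $Z(T(f-\mathbf{u}))\subseteq\{y\in Y:\varphi(y)\neq0\}$, so $\overline{Z(T(f-\mathbf{u}))}^{\beta Y}\subseteq\overline{\{y\in Y:\varphi(y)\neq0\}}^{\beta Y}$, and by the first step the latter set omits $y_2$ — contradicting $y_2\in\overline{Z(T(f-\mathbf{u}))}^{\beta Y}$. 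This shows $|Z_x|\le1$, and combined with $Z_x\neq\emptyset$ we obtain $|Z_x|=1$.

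The step I expect to be the real obstacle is that $Z_x$ may genuinely meet $\beta Y\setminus Y$, where assertions of the form ``$(Th)^{\beta}(y)=0$'' say very little — in particular they do not put $y$ in $\overline{Z(Th)}^{\beta Y}$. The argument is arranged to never evaluate $T\mathbf{u}$ at $y_2$: it uses only (i) that $f-\mathbf{u}$ vanishes at $x$, which by the definition of $Z_x$ forces $y_2\in\overline{Z(T(f-\mathbf{u}))}^{\beta Y}$, and (ii) that $T\mathbf{u}$ is zero-free on $Y$, which pins $Z(T(f-\mathbf{u}))$ inside $\{\varphi\neq0\}$, a set whose $\beta Y$-closure misses $y_2$. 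After that, the only slightly technical matter is producing the separating function $\varphi$ in the first step.
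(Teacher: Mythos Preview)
Your proof is correct, but it is organized as the ``mirror image'' of the paper's argument. Both proofs separate $y_1$ and $y_2$ by $\varphi\in A(Y)$, set $g=\varphi\cdot\mathbf{w}$, pull it back to $f$ via surjectivity, and look at $u=f(x)$. The paper works first on the side where $\varphi\equiv 0$: the net from Lemma~\ref{LFormula} there shows $Z(T\mathbf{u})\neq\emptyset$, so (Z) gives $u=0$; then on the $\varphi\equiv 1$ side the same net formula yields $\varphi(y_\alpha)\,w=(T\mathbf{0})(y_\alpha)=0$ while $\varphi(y_\alpha)\,w\to w\neq0$, a contradiction. You instead start on the $\varphi\equiv 1$ side to get $(T\mathbf{u})^\beta(y_1)=w\neq0$, hence $u\neq0$, and then use (Z) in the opposite direction to conclude $T\mathbf{u}$ is zero-free on $Y$; the contradiction is reached by showing $Z(T(f-\mathbf{u}))\subseteq\{\varphi\neq0\}$ and that the $\beta Y$-closure of the latter misses $y_2$.

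The paper's version is slightly leaner: it never needs the closure inclusion $\overline{\{\varphi\neq0\}}^{\beta Y}\subseteq\beta Y\setminus V_2$, because the net supplied by Lemma~\ref{LFormula} already lands inside $Y$ and gives an honest zero of $T\mathbf{u}$. Your version, on the other hand, makes very explicit the point you flag at the end --- that one never evaluates $T\mathbf{u}$ at the possibly non-$Y$ point $y_2$ --- and that clarity is worth something. One cosmetic remark: the identity $g^\beta(y_1)=\varphi^\beta(y_1)\,w$ is a little loose as written (the product of a $\beta\mathbb{R}$-value with $w$ needs interpretation), but since $g\equiv w$ on $V_1\cap Y$ and $y_1\in\overline{V_1\cap Y}^{\beta Y}$, you get $g^\beta(y_1)=w$ directly, which is all you use.
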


\begin{proof}
Suppose, to the contrary, that there are distinct $y_{0},y_{1}\in Z_{x}.$ Let
$U_{0},U_{1}$ be open neighborhoods of $y_{0}$ and $y_{1}$ in $\beta
Y\ $respectively$\ $whose closures are disjoint. By almost normality of
$A\left(  Y\right)  ,$ there exists $\varphi\in A\left(  Y\right)  $ such
that
\[
\varphi=0\text{ on }U_{0}\cap Y\text{ and }\varphi=1\text{ on \thinspace}%
U_{1}\cap Y.
\]
Let $v\in F\smallsetminus\left\{  0\right\}  $ and $g=\varphi\cdot
\mathbf{v}\in A\left(  Y,F\right)  .$ From the fact that $T$ is onto$,$ there
exists $f\in A\left(  X,E\right)  $ such that $Tf=g.$ By Lemma \ref{LFormula},
for $i=0,1,$ there exist $\left(  y_{\alpha}^{i}\right)  $ in $Y$ converging
to $y_{i}$ such that%
\[
\left(  T\mathbf{u}\right)  \left(  y_{\alpha}^{i}\right)  =\left(  Tf\right)
\left(  y_{\alpha}^{i}\right)  =\varphi\left(  y_{\alpha}^{i}\right)  v,\text{
where }u=f\left(  x\right)  .
\]
Since $\varphi\left(  y_{\alpha}^{0}\right)  =0$ for sufficiently large
$\alpha,$ $\left(  T\mathbf{u}\right)  \left(  y_{\alpha}^{0}\right)  =0$ for
such $\alpha$'s. It follows that $Z\left(  T\mathbf{u}\right)  \neq\emptyset.$
By (Z), $Z\left(  \mathbf{u}\right)  \neq\emptyset$ and hence $f\left(
x\right)  =u=0.$ On the other hand, $\lim\limits_{\alpha}\varphi\left(
y_{\alpha}^{1}\right)  v=v\neq0.$ But
\[
\varphi\left(  y_{\alpha}^{1}\right)  v=\left(  Tf\right)  \left(  y_{\alpha
}^{1}\right)  =\left(  T\mathbf{u}\right)  \left(  y_{\alpha}^{1}\right)
=\left(  T\mathbf{0}\right)  \left(  y_{\alpha}^{1}\right)  =0,
\]
a contradiction.
\end{proof}

\begin{proof}
[Proof of Proposition \ref{Zx}]Let $Z=\bigcup\limits_{x}Z_{x}.$ Then
$Z\subseteq\beta Y.$ Define a mapping $h:Z\rightarrow X$ by sending the unique
element in $Z_{x}$ to $x.$ The mapping $h$ is well-defined by Lemma
\ref{Disjoint}. Clearly $h$ is a bijection and for any $y\in Z$ and $f\in
A\left(  X,E\right)  ,$ Lemma \ref{LFormula} yields a net $\left(  y_{\alpha
}\right)  $ in $Y$ converging to $y$ so that (\ref{eq1}) is satisfied. It
remains to show that $h$ and $h^{-1}$ are continuous, and that $Z$ is dense in
$\beta Y$.

Suppose that $h$ is not continuous. Using the compactness of $\beta X,$ there
exists $y_{0}\in Z,$ $x^{\prime}\in\beta X$ and a net $\left(  y_{\alpha
}\right)  $ in~$Z$ converging to $y_{0}$ such that
\[
h\left(  y_{\alpha}\right)  =x_{\alpha}\rightarrow x^{\prime}\neq
x_{0}=h\left(  y_{0}\right)  .
\]
Let $U$ be an open neighborhood of $x^{\prime}$ in $\beta X$ such that
$\overline{U}^{\beta X}$ does not contain $x_{0}.$ Choose $\varphi\in A\left(
X\right)  $ such that $\varphi=0$ in $U\cap X$ and $\varphi\left(
x_{0}\right)  =1.$ For all $f\in A\left(  X,E\right)  .$ We have by Lemma
\ref{LFormula} that
\[
\left(  T\left(  \varphi f\right)  \right)  \left(  y_{\alpha}\right)
=\left(  T\mathbf{u}_{\alpha}\right)  \left(  y_{\alpha}\right)  \text{ for
all }\alpha,
\]
where $u_{\alpha}=\varphi\left(  x_{\alpha}\right)  f\left(  x_{\alpha
}\right)  .$ Since $u_{\alpha}=\varphi\left(  x_{\alpha}\right)  f\left(
x_{\alpha}\right)  =0$ for sufficiently large $\alpha,$ $\left(  T\left(
\varphi f\right)  \right)  ^{\beta}\left(  y_{0}\right)  =0.$ On the other
hand, $\left(  T\left(  \varphi f\right)  \right)  ^{\beta}\left(
y_{0}\right)  =\left(  T\mathbf{v}\right)  ^{\beta}\left(  y_{0}\right)  ,$
where $v=\varphi\left(  x_{0}\right)  f\left(  x_{0}\right)  =f\left(
x_{0}\right)  .$ Thus $\left(  Tf\right)  ^{\beta}\left(  y_{0}\right)  =0$
for all $f\in A\left(  X,E\right)  ,$ contradicting the surjectivity of $T.$

Suppose that $h^{-1}$ is not continuous. Using the compactness of $\beta Y,$
there exists $x_{0}\in X,$ $y^{\prime}\in\beta Y$ and a net $\left(
x_{\alpha}\right)  $ in~$X$ converging to $x_{0}$ such that $h^{-1}\left(
x_{\alpha}\right)  =y_{\alpha}\rightarrow y^{\prime}\neq y_{0}=h^{-1}\left(
x_{0}\right)  .$ Let\ $U$ and $V$ be open neighborhoods of $y_{0}$ and
$y^{\prime}$ in $\beta Y$ respectively whose closures are disjoint. Choose
$\varphi\in A\left(  Y\right)  $ such that $\varphi=1$ in $U\cap Y$ and
$\varphi=0$ on $V\cap Y.$ Let $v\in F\smallsetminus\left\{  0\right\}  \ $and
set $g=\varphi\cdot\mathbf{v}\in A\left(  Y,F\right)  .$ Since $T$ is
surjective, there is an $f\in A\left(  X,E\right)  $ such that $Tf=g.$ By
Lemma \ref{LFormula}, for each $\alpha,$ there exists a net $\left(
y_{\alpha}^{\gamma}\right)  _{\gamma}$ in $Y$ converging to $y_{\alpha}$ such
that%
\begin{align*}
\left(  T\mathbf{u}_{\alpha}\right)  \left(  y_{\alpha}^{\gamma}\right)   &
=\left(  Tf\right)  \left(  y_{\alpha}^{\gamma}\right)  \text{ for all }%
\gamma,\text{ where }u_{\alpha}=f\left(  x_{\alpha}\right)  ,\\
&  =g\left(  y_{\alpha}^{\gamma}\right)  =\varphi\left(  y_{\alpha}^{\gamma
}\right)  v\text{ for all }\gamma.
\end{align*}
For sufficiently large $\alpha,$ there exists $\gamma$ where $\varphi\left(
y_{\alpha}^{\gamma}\right)  =0.$ Hence $Z\left(  T\mathbf{u}_{\alpha}\right)
\neq\emptyset$ for large enough $\alpha.$ By (Z), $Z\left(  \mathbf{u}%
_{\alpha}\right)  \neq\emptyset$ and consequently, $f\left(  x_{\alpha
}\right)  =0.$ Thus $f\left(  x_{0}\right)  =0.$ Therefore, by Lemma
\ref{LFormula},
\begin{align*}
0 &  =\left(  T\mathbf{0}\right)  ^{\beta}\left(  y_{0}\right)  =\left(
Tf\right)  ^{\beta}\left(  y_{0}\right)  \\
&  =\left(  \varphi\cdot\mathbf{v}\right)  ^{\beta}\left(  y_{0}\right)  \\
&  =\varphi^{\beta}\left(  y_{0}\right)  v\\
&  =v\neq0,
\end{align*}
a contradiction.

Finally, we show that $Z$ is dense in $\beta Y.$ If this is not true, then
there exists $y_{0}\in\beta Y\smallsetminus\overline{Z}^{\beta Y}.$ Let $U$
and $V$ be open neighborhoods of $\overline{Z}^{\beta Y}$ and $y_{0}$ in
$\beta Y$ respectively whose closures are disjoint. There exists $\varphi\in
A\left(  Y\right)  $ so that $\varphi=0$ on $U\cap Y$ and $\varphi=1$ on
$V\cap Y.$ Take $v\in F\smallsetminus\left\{  0\right\}  $ and set
$g=\varphi\cdot\mathbf{v}\in A\left(  Y,F\right)  .$ By the surjectivity of
$T,$ there exists $f\in A\left(  X,E\right)  $ such that $Tf=g.$ Let $x\in X$
and $y\in Z_{x}\subseteq U.$ By Lemma \ref{LFormula}, there exists $\left(
y_{\alpha}\right)  \subseteq Y$ converging to $y$ such that
\begin{align*}
\left(  T\mathbf{u}\right)  \left(  y_{\alpha}\right)   &  =\left(  Tf\right)
\left(  y_{\alpha}\right)  \text{ for all }\alpha\text{, where }u=f\left(
x\right) \\
&  =\varphi\left(  y_{\alpha}\right)  v.
\end{align*}
For sufficiently large $\alpha,$ $\varphi\left(  y_{\alpha}\right)  =0.$ Thus
$Z\left(  T\mathbf{u}\right)  \neq\emptyset$ and by $\left(  \text{Z}\right)
,$ $Z\left(  \mathbf{u}\right)  \neq\emptyset,$ which means that $u=0.$ Since
$x$ is arbitrary, $f=0.$ Thus $g=0,$ which is absurd.
\end{proof}

The next theorem, which is the main result of the section, gives a preliminary
description of vector space isomorphisms preserving common zeros. Applications
of this theorem in certain cases yield sharp characterizations of said
mappings. These applications will be our concern in the subsequent sections.

\begin{thm}
\label{thm2}Suppose that $T:A(X,E)\rightarrow A(Y,F)$ is a vector space
isomorphism that preserves common zeros. Then there are dense subsets $Z$ of
$\beta Y$ and $W$ of $\beta X$ and homeomorphisms $h:Z\rightarrow X$,
$k:W\rightarrow Y$ so that $h\cup k^{-1}:Z\cup Y\rightarrow X\cup W$ is a
homeomorphism. Moreover, for all $f\in A(X,E),\ y\in Z,$%
\[
(Tf)^{\beta}(y)=(T\mathbf{u})^{\beta}(y),\ u=f(h(y)),
\]
and for all $\ g\in A(Y,F),\ x\in W,$
\[
(T^{-1}g)^{\beta}(x)=(T^{-1}\mathbf{v})^{\beta}(x),\ v=g(k(x)).
\]

\end{thm}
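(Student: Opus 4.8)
The plan is to apply Proposition \ref{Zx} twice, once to $T$ and once to $T^{-1}$, and then to reconcile the two resulting homeomorphisms into a single homeomorphism between $Z \cup Y$ and $X \cup W$. First I would invoke Proposition \ref{Zx} for the surjective linear map $T$, obtaining the dense subset $Z \subseteq \beta Y$, the homeomorphism $h : Z \to X$, and the identity $(Tf)^\beta(y) = (T\mathbf{u})^\beta(y)$ with $u = f(h(y))$ for all $y \in Z$, $f \in A(X,E)$. Then I would apply Proposition \ref{Zx} to the inverse isomorphism $T^{-1} : A(Y,F) \to A(X,E)$ (which also preserves common zeros, by symmetry of condition (Z)), obtaining a dense subset $W \subseteq \beta X$, a homeomorphism $k : W \to Y$, and the dual identity $(T^{-1}g)^\beta(x) = (T^{-1}\mathbf{v})^\beta(x)$ with $v = g(k(x))$ for all $x \in W$, $g \in A(Y,F)$. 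The two displayed formulas in the theorem statement are then exactly these conclusions, so the only real content left is the topological assertion that $h \cup k^{-1} : Z \cup Y \to X \cup W$ is a well-defined homeomorphism.

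The key step is to show that $h$ and $k^{-1}$ agree on the overlap and that together they give a bijection. I would first observe that $Y \subseteq Z$: indeed, for $y \in Y$ the net in Proposition \ref{Zx} may be taken to be constantly $y$ once one checks that $y \in Z_{x}$ for $x = h(y)$ lies in $Y$ — more precisely, $Z_x \cap Y$ should be shown to be the "honest" preimage, and a point of $Y$ lying in some $Z_x$ forces that net to stabilize. Dually, $X \subseteq W$. Then I would argue that on $Y$ (viewed inside $Z$) the map $h$ restricted to $Y$ is the inverse of $k$ restricted to $k^{-1}(Y)$: this is where I would use both formulas simultaneously. For $y \in Y$ and $x = h(y) \in X$, evaluate $(T^{-1}(Tf))^\beta(x) = f^\beta(x)$ against the dual formula to pin down $k(x) = y$, showing $k(h(y)) = y$ for $y \in Y$ and symmetrically $h(k(x)) = x$ for $x \in X$. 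Consequently $k^{-1} = h$ on $Y$, the definition $h \cup k^{-1}$ is unambiguous on $Z \cup Y$, and it maps bijectively onto $X \cup W$ because $h : Z \to X$ is a bijection carrying $Y$ onto $X$ and $k^{-1} : Y \to W$ is a bijection carrying $Y \supseteq$ back, with the images gluing correctly along $X$.

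For continuity, I would use that $Z \cup Y$ and $X \cup W$ are subspaces of the compact Hausdorff spaces $\beta Y$ and $\beta X$, so it suffices to check continuity of $h \cup k^{-1}$ and its inverse via nets. A net in $Z \cup Y$ converging to a point of $Z$ can be handled by continuity of $h$ on $Z$ (for the tail lying in $Z$) together with the fact that $Y \subseteq Z$ and $h|_Y = (k|_{k^{-1}(Y)})^{-1}$, so the $Y$-part of the net is consistent; the symmetric argument with $k$ handles convergence to points of $W$. Since $X \cup W \subseteq \beta X$ is Hausdorff and $Z \cup Y$ is (in the relevant cases) dense and the map is a continuous bijection with continuous local inverses $h^{-1}$ on $X$ and $k$ on $W$, the glued map is a homeomorphism.

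The main obstacle I anticipate is the bookkeeping at step two: verifying that $h$ and $k^{-1}$ genuinely coincide on the common domain $Y$, rather than merely being "compatible" in some weaker sense, and ensuring the domains $Z \cup Y$ and $X \cup W$ are exactly matched up by the glued map. This requires carefully chasing the two evaluation identities $(Tf)^\beta(y) = (T\mathbf{u})^\beta(y)$ and $(T^{-1}g)^\beta(x) = (T^{-1}\mathbf{v})^\beta(x)$ through the composition $T^{-1} \circ T = \mathrm{id}$, using the surjectivity of $T$ to rule out degenerate behavior (as in the proofs of Lemmas \ref{Disjoint} and \ref{Singleton}), and keeping track of which points live in $X$, $Y$, $W \setminus X$, and $Z \setminus Y$. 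The topological (net-convergence) verifications, by contrast, are routine once the set-theoretic gluing is established.
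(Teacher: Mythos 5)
Your first step (apply Proposition \ref{Zx} to $T$ and to $T^{-1}$ to get $Z$, $h$, $W$, $k$ and the two evaluation formulas) is exactly the paper's, but the gluing argument has a genuine gap. You assert $Y\subseteq Z$ (and dually $X\subseteq W$) and build everything on it. This is unjustified, and your sketch of it is circular: you speak of ``$x=h(y)$ for $y\in Y$,'' which already presupposes $y\in Z$. In fact, for $y_0\in Y$ one can show $y_0\in Z$ essentially only when $k^{-1}(y_0)\in X$ (if $x_0=k^{-1}(y_0)\in X$ and $f(x_0)=0$, the dual formula gives $Tf(y_0)=0$, whence $y_0\in Z_{x_0}$; if $x_0\in W\smallsetminus X$ the argument dies). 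So ``$Y\subseteq Z$'' is equivalent to ``$W\subseteq X$,'' which is precisely the conclusion that requires the \emph{additional} hypotheses of the later sections (realcompactness in Proposition \ref{prop3}/Theorem \ref{thm4}, local compactness in Lemma \ref{lem1}) and is not available in the general setting of Theorem \ref{thm2}. The same problem sinks your verification of $k(h(y))=y$: you invoke the dual formula at $x=h(y)$, but that formula is only known to hold for $x\in W$, and $h(y)\in X$ need not lie in $W$.

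The missing idea is the paper's Lemma \ref{Extension}: $h$ extends to a continuous map $\tilde h:\beta Y\to\beta X$. This is not a routine topological fact --- a continuous map on a dense subspace $Z$ of $\beta Y$ into a compact space need not extend to $\beta Y$, since $\beta Z\neq\beta Y$ in general --- and the paper proves it by contradiction using bump functions from $A(X)$, the surjectivity of $T$, and the formula of Proposition \ref{Zx}. Combined with Lemma \ref{WellDefined} ($\tilde h|_Y=k^{-1}$, again proved via the operator, not via set inclusions), one gets for free that $h\cup k^{-1}=\tilde h|_{Z\cup Y}$ is well defined on $Z\cap Y$, continuous on all of $Z\cup Y$ (including at points of $Y\smallsetminus Z$ approached by nets from $Z$), and, by the symmetric statement for $h^{-1}\cup k$, a homeomorphism. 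Your proposal treats exactly these points as ``routine net-convergence verifications,'' but without the continuous extension they are where the proof actually lives; as written, the argument does not go through.
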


\begin{proof}
Applying Proposition \ref{Zx} to both $T$ and $T^{-1}$, we find dense subsets
$Z$ of $\beta Y,$ $W$ of $\beta X$ and homeomorphisms $h:Z\rightarrow X$,
$k:W\rightarrow Y$ such that
\begin{equation}
(Tf)^{\beta}(y)=(T\mathbf{u})^{\beta}(y),\ u=f(h(y)) \label{FormulaT}%
\end{equation}
whenever$\ f\in A(X,E),$ $y\in Z$ and%
\begin{equation}
(T^{-1}g)^{\beta}(x)=(T^{-1}\mathbf{v})^{\beta}(x),\ v=g(k(x))
\label{FormulaTinv}%
\end{equation}
whenever $g\in A\left(  Y,F\right)  ,$ $x\in W.$

In the notation of Lemmas \ref{Extension} and \ref{WellDefined}, $\tilde
{h}_{|Z\cup Y}=\tilde{h}_{|Z}\cup\tilde{h}_{|Y}=h\cup k^{-1}$ is a
well-defined map from $Z\cup Y$ to $X\cup W.$ By symmetry, $h^{-1}\cup k$ is
also a well-defined continuous map. Therefore, $h\cup k^{-1}$ is a
homeomorphism. The proof of Theorem \ref{thm2} is complete.
\end{proof}

\begin{lem}
\label{Extension}There is a continuous extension $\tilde{h}:\beta
Y\rightarrow\beta X$ of $h:Z\rightarrow X.$
\end{lem}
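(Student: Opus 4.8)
The plan is to build the extension $\tilde{h}$ directly from the formula \eqref{FormulaT} obtained in Proposition~\ref{Zx}. The idea is that the formula $(Tf)^\beta(y)=(T\mathbf u)^\beta(y)$ with $u=f(h(y))$ should persist under taking $\beta Y$-limits, and this forces $h$ to extend continuously. Concretely, for $y_0\in\beta Y$ fix a net $(y_\gamma)$ in $Z$ with $y_\gamma\to y_0$. Since $h(y_\gamma)=x_\gamma\in X\subseteq\beta X$ and $\beta X$ is compact, by passing to a subnet we may assume $x_\gamma\to x'\in\beta X$; define $\tilde h(y_0)=x'$. To see this is independent of the choice of net (and subnet), I would argue by contradiction: if two nets in $Z$ converging to $y_0$ produce $h$-images converging to distinct points $x_1\ne x_2$ of $\beta X$, then pick $\varphi\in A(X)$ with $\varphi^\beta(x_1)=1$, $\varphi=0$ on a $\beta X$-neighborhood of $x_2$ (using almost normality of $A(X)$, since $\{x_1\}$ and that neighborhood have disjoint $\beta X$-closures). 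For any $f\in A(X,E)$, applying \eqref{FormulaT} along the appropriate nets in $Y$ from Lemma~\ref{LFormula} and passing to the limit, one gets that $(T(\varphi f))^\beta(y_0)$ simultaneously equals $(T\mathbf v_1)^\beta(y_0)$ with $v_1=\varphi(x_1)f(x_1)=f(x_1)$ and $(T\mathbf 0)^\beta(y_0)=0$ (because $\varphi$ vanishes near $x_2$, so $u_\gamma=\varphi(x_\gamma)f(x_\gamma)=0$ eventually along the second net). Hence $(Tf)^\beta(y_0)=0$ for all $f$, contradicting surjectivity of $T$ exactly as in the proof of Proposition~\ref{Zx}.

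Once $\tilde h$ is well-defined on all of $\beta Y$, the remaining points are routine. On $Z$ it agrees with $h$: take the constant net $y_\gamma\equiv y$. Continuity of $\tilde h$ follows from a standard diagonal/net argument: if $y_\delta\to y_0$ in $\beta Y$, approximate each $y_\delta$ by a net in $Z$ whose $h$-image is within a prescribed neighborhood of $\tilde h(y_\delta)$, assemble a net in $Z$ converging to $y_0$, and use the definition of $\tilde h(y_0)$ together with the fact that $\beta X$ is (compact) Hausdorff so limits of nets are unique. Alternatively, since $Z$ is dense in $\beta Y$ and $\tilde h|_Z = h$ is continuous into the compact Hausdorff space $\beta X$, one can phrase the whole construction as: $h$ is a continuous map from the dense subspace $Z$ into the compact Hausdorff space $\beta X$, and the above well-definedness argument is precisely the verification that $h$ extends continuously to the closure $\overline Z^{\beta Y}=\beta Y$.

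The main obstacle is the well-definedness of $\tilde h$, i.e. showing the limit point $x'$ does not depend on the net. A priori $h$ need not be uniformly continuous in any sense, so one cannot just invoke an abstract extension theorem for uniformly continuous maps; the argument genuinely needs the structure of $T$ and property (Z), packaged through Lemma~\ref{LFormula} and the surjectivity-derived contradiction. A secondary technical care is that \eqref{FormulaT} holds only \emph{along a net in $Y$} associated to each $(f,y)$, not pointwise on $Z$, so the limiting computations must interleave two nets — the net $(y_\gamma)\subseteq Z$ with $y_\gamma\to y_0$, and for each $\gamma$ the auxiliary net $(y_\gamma^\sigma)\subseteq Y$ with $y_\gamma^\sigma\to y_\gamma$ from Lemma~\ref{LFormula}; one handles this by choosing, for each $\gamma$, a single index $\sigma(\gamma)$ making $\varphi(y_\gamma^{\sigma(\gamma)})=0$ and $y_\gamma^{\sigma(\gamma)}$ close to $y_\gamma$, then passing to the limit in $\gamma$ using continuity of $(T(\varphi f))^\beta$. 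With these two points dispatched, the lemma follows.
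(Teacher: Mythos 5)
Your overall strategy is the same as the paper's (reduce to well-definedness of the limit along nets, separate the two putative limit points $x_{1}\neq x_{2}$ by a function $\varphi\in A(X)$, and derive a contradiction with surjectivity), but the execution of the ``nonzero value'' half of the dichotomy has a genuine gap. Along the net $(y_{\gamma})$ in $Z$ with $h(y_{\gamma})=x_{\gamma}\to x_{1}$, formula (\ref{FormulaT}) gives $(T(\varphi f))^{\beta}(y_{\gamma})=(T\mathbf{u}_{\gamma})^{\beta}(y_{\gamma})$ with $u_{\gamma}=\varphi(x_{\gamma})f(x_{\gamma})$ \emph{varying with} $\gamma$. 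You cannot pass from this to $(T(\varphi f))^{\beta}(y_{0})=(T\mathbf{v}_{1})^{\beta}(y_{0})$ with $v_{1}=f(x_{1})$: first, $x_{1}$ may lie in $\beta X\smallsetminus X$, so $f(x_{1})$ need not even be defined; second, even when $x_{1}\in X$ you would need joint continuity of $(u,y)\mapsto(T\mathbf{u})^{\beta}(y)$ in the $E$-variable, which is nowhere available. The subsequent leap to ``$(Tf)^{\beta}(y_{0})=0$ for all $f$'' is likewise an application of (\ref{FormulaT}) at the point $y_{0}$, which is only legitimate when $y_{0}\in Z$, whereas here $y_{0}$ is an arbitrary point of $\beta Y$.

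The gap is repairable, and the paper's proof shows how: instead of comparing $(T\mathbf{u}_{\gamma})^{\beta}(y_{\gamma})$ to a fixed constant function, compare it to $(Tf)^{\beta}(y_{\gamma})$. Arrange $\varphi=1$ (not merely $\varphi^{\beta}(x_{1})=1$) on a neighborhood of $x_{1}$ intersected with $X$; then eventually $u_{\gamma}=f(x_{\gamma})=w_{\gamma}$ where $w_{\gamma}=f(h(y_{\gamma}))$, and a second application of (\ref{FormulaT}) to $f$ itself at $y_{\gamma}$ gives $(T\mathbf{w}_{\gamma})^{\beta}(y_{\gamma})=(Tf)^{\beta}(y_{\gamma})$. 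Hence $(T(\varphi f))^{\beta}$ and $(Tf)^{\beta}$ agree eventually along the net, and continuity of both $\beta$-extensions yields $(T(\varphi f))^{\beta}(y_{0})=(Tf)^{\beta}(y_{0})$ with no limit interchange in the $E$-variable. The paper streamlines this further by taking the single function $f=T^{-1}\mathbf{v}$ for $v\neq 0$, so that the value along the $\varphi=1$ net is constantly $v$, while the $\varphi=0$ net forces the value $0$, giving $v=0$ directly. Your handling of the interleaved nets and your density/closure framing are fine; it is only this bridging step that must be rewritten.
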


\begin{proof}
If the lemma fails, then there exist $\left(  y_{\alpha}^{1}\right)  ,\left(
y_{\alpha}^{2}\right)  $ in $Z$ converging to $y_{0}\in\beta Y$ such that
$h\left(  y_{\alpha}^{i}\right)  \rightarrow x_{i}\in\beta X,$ $i=1,2,$ with
$x_{1}\neq x_{2}.$ Let $U$ and $V$ be open neighborhoods of $x_{1}$ and
$x_{2}$ in $\beta X$ respectively with disjoint closures. There exists
$\varphi\in A\left(  X\right)  $ such that $\varphi=0$ on $U\cap X$ and
$\varphi=1$ on $V\cap X.\,$Let $v\in F\smallsetminus\left\{  0\right\}  $ and
pick $f\in A\left(  X,E\right)  $ such that $Tf=\mathbf{v}.$ Applying
(\ref{FormulaT}), we find that for all $\alpha,$%

\[
v =\left(  Tf\right)  ^{\beta}\left(  y_{\alpha}^{2}\right)  =\left(
T\mathbf{w}_{\alpha}^{2}\right)  ^{\beta}\left(  y_{\alpha}^{2}\right)
,\text{ where }w_{\alpha}^{2}=f\left(  h\left(  y_{\alpha}^{2}\right)
\right)
\]
and%
\[
\left(  T\left(  \varphi f\right)  \right)  ^{\beta}\left(  y_{\alpha}%
^{i}\right)  =\left(  T\mathbf{u}_{\alpha}^{i}\right)  ^{\beta}\left(
y_{\alpha}^{i}\right)  \text{, where }u_{\alpha}^{i}=\left(  \varphi f\right)
\left(  h\left(  y_{\alpha}^{i}\right)  \right)  ,\text{ }i=1,2.
\]
For sufficiently large $\alpha,$ $\varphi\left(  h\left(  y_{\alpha}%
^{1}\right)  \right)  =0$ and $\varphi\left(  h\left(  y_{\alpha}^{2}\right)
\right)  =1\ $and thus%
\[
u_{\alpha}^{i}=\left\{
\begin{array}
[c]{ccc}%
0 & \text{if} & i=1,\\
w_{\alpha}^{2} & \text{if} & i=2.
\end{array}
\right.
\]
It follows that for such $\alpha,$
\begin{align*}
\left(  T\left(  \varphi f\right)  \right)  ^{\beta}\left(  y_{\alpha}%
^{i}\right)   &  =\left\{
\begin{array}
[c]{ccc}%
0 & \text{if} & i=1,\\
\left(  Tf\right) ^{\beta}\left(  y_{\alpha}^{2}\right)  & \text{if} & i=2,
\end{array}
\right. \\
&  =\left\{
\begin{array}
[c]{ccc}%
0 & \text{if} & i=1,\\
v & \text{if} & i=2.
\end{array}
\right.
\end{align*}
Upon taking limits, we have $v=\left(  T\left(  \varphi f\right)  \right)
^{\beta}\left(  y_{0}\right)  =0,$ a contradiction.
\end{proof}

\begin{lem}
\label{WellDefined}$\tilde{h}_{|Y}=k^{-1}.$
\end{lem}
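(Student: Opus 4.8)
The goal is to show that the continuous extension $\tilde h:\beta Y\to\beta X$ of $h:Z\to X$ (from Lemma \ref{Extension}) restricts on $Y$ to the inverse of $k:W\to Y$; in particular $\tilde h$ maps $Y$ into $W$. The plan is to fix $y\in Y$ and let $x=\tilde h(y)\in\beta X$, and argue that $x\in W$ with $k(x)=y$. Since $Z$ is dense in $\beta Y$, choose a net $(y_\alpha)$ in $Z$ with $y_\alpha\to y$, so that $h(y_\alpha)\to x$ in $\beta X$ by continuity of $\tilde h$. The idea is then to feed this information through the two pointwise formulas (\ref{FormulaT}) and (\ref{FormulaTinv}) to pin down $k^{-1}(y)$.

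The main step is a separation argument by contradiction. Suppose $\tilde h(y)=x\neq k^{-1}(y)$ in the following sense: either $x\notin W$, or $x\in W$ but $k(x)\neq y$. In the second case pick disjoint open neighborhoods; in the first case one still separates $x$ from a suitable point associated to $y$. Concretely, since $k:W\to Y$ is a homeomorphism and $W$ is dense in $\beta X$, I expect to produce $\varphi\in A(X)$ that vanishes near $x$ (hence $\varphi(h(y_\alpha))=0$ for large $\alpha$) while ``being $1$ at the point over $y$.'' Then for any $g\in A(Y,F)$, write $f=T^{-1}g$ and apply (\ref{FormulaT}) to $\varphi f$ along the net $(y_\alpha)$: for large $\alpha$, $(\varphi f)(h(y_\alpha))=0$, so $\bigl(T(\varphi f)\bigr)^\beta(y_\alpha)=0$ and taking limits $\bigl(T(\varphi f)\bigr)^\beta(y)=0$. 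On the other hand, applying (\ref{FormulaTinv}) with $k$ and the point $x\in W$ (or the appropriate point of $W$ approximating $x$) should identify $\bigl(T(\varphi f)\bigr)^\beta$ near $y$ with $g$ itself, because $\varphi$ equals $1$ there; choosing $g=\mathbf v$ constant for some $v\in F\smallsetminus\{0\}$ then forces $v=0$, a contradiction. This mirrors exactly the structure of the proof of Lemma \ref{Extension}, now played off against formula (\ref{FormulaTinv}) instead of only (\ref{FormulaT}).

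Once we know $\tilde h(Y)\subseteq W$ and $k\circ\tilde h_{|Y}=\mathrm{id}_Y$, the reverse inclusion follows by symmetry: applying the same reasoning to $T^{-1}$ (whose associated homeomorphism is $k:W\to Y$ with extension $\tilde k:\beta X\to\beta Y$) gives $\tilde k_{|X}=h^{-1}$, so $k(W)\supseteq$ well, more precisely $\tilde h_{|Y}$ and $k$ are mutually inverse bijections between $Y$ and their images, whence $\tilde h_{|Y}=k^{-1}$ as claimed.

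The hard part will be organizing the separation so that the chosen $\varphi\in A(X)$ simultaneously kills the net $(h(y_\alpha))$ near $x=\tilde h(y)$ and survives as the constant $1$ when viewed through formula (\ref{FormulaTinv}) at the relevant point of $W$; this requires knowing that the point of $X$ (or $W$) ``sitting over'' $y$ via $k^{-1}$ is genuinely separated in $\beta X$ from $x=\tilde h(y)$ under the contradiction hypothesis, and then checking that almost normality of $A(X)$ supplies such a $\varphi$. Everything else — density of $Z$ and $W$, continuity of $\tilde h$, and the two substitution formulas — is already in hand from Proposition \ref{Zx} and Lemma \ref{Extension}.
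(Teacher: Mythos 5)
Your overall architecture matches the paper's: assume $\tilde h(y_0)=x'\neq x_0=k^{-1}(y_0)$ for some $y_0\in Y$, separate $x'$ from $x_0$ by open sets $U,V$ in $\beta X$ with disjoint closures, choose $\varphi\in A(X)$ with $\varphi=0$ on $U\cap X$ and $\varphi=1$ on $V\cap X$, use a net $(y_\alpha)$ in $Z$ converging to $y_0$ (so $h(y_\alpha)\to x'$ and hence $\varphi(h(y_\alpha))=0$ eventually) to conclude via (\ref{FormulaT}) that $T(\varphi f)$ vanishes at $y_0$, and then transfer this back through (\ref{FormulaTinv}) at the point $x_0\in W$. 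One simplification: you need not fret over whether $\tilde h(y_0)$ lies in $W$; the point $k^{-1}(y_0)$ lies in $W$ by definition, so the contradiction hypothesis is simply that $\tilde h(y_0)$ and $k^{-1}(y_0)$ are two distinct points of the compact Hausdorff space $\beta X$, which can always be separated, and almost normality then supplies $\varphi$. Likewise your final ``by symmetry'' paragraph is superfluous: once $\tilde h(y)=k^{-1}(y)$ for every $y\in Y$, the lemma is proved.

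The genuine gap is in the concluding step, where your choice of test function does not produce a contradiction. With $g=\mathbf v$ and $f=T^{-1}\mathbf v$, what (\ref{FormulaTinv}) applied to $T(\varphi f)$ at $x_0$ actually yields is $(\varphi f)^{\beta}(x_0)=(T^{-1}\mathbf 0)^{\beta}(x_0)=0$, and since $\varphi\equiv 1$ on $V\cap X$ this says $(T^{-1}\mathbf v)^{\beta}(x_0)=0$. That is not absurd: at this stage $x_0\in W\subseteq\beta X$ is not known to lie in $X$, and the $\beta$-extension of a nowhere-vanishing function on $X$ can vanish at a point of $\beta X\smallsetminus X$ (consider $x\mapsto x$ on $(0,1)$ at the point $0$ of $\beta(0,1)$). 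There is no mechanism in (\ref{FormulaTinv}) that ``identifies $T(\varphi f)$ with $g$ near $y_0$''; the formula only compares $T^{-1}$ of a function with $T^{-1}$ of the constant equal to its value at $k(x_0)$, and the route through $Z$ fails too, because the net $h(y_\alpha)$ accumulates at $x'$, not at $x_0$. The repair is to use a test function whose value at $x_0$ is visible: take $f=\varphi\cdot\mathbf u$ with $u\in E\smallsetminus\{0\}$ (in your scheme, $g=T(\varphi\cdot\mathbf u)$). The net argument gives $(Tf)(y_0)=0$, whence $f^{\beta}(x_0)=(T^{-1}(Tf))^{\beta}(x_0)=(T^{-1}\mathbf 0)^{\beta}(x_0)=0$ by (\ref{FormulaTinv}); but $f\equiv u$ on $V\cap X$, so $f^{\beta}(x_0)=u\neq 0$. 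This is exactly the paper's proof.
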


\begin{proof}
Suppose that there exists $y_{0}\in Y$ such that $x^{\prime}=\tilde{h}\left(
y_{0}\right)  $ is different from $x_{0}=k^{-1}\left(  y_{0}\right)  .$ Let
$\left(  y_{\alpha}\right)  $ be a net in $Z$ converging to $y_{0}$ and let
$x_{\alpha}=\tilde{h}\left(  y_{\alpha}\right)  =h\left(  y_{\alpha}\right)
.$ Then $x_{\alpha}\rightarrow x^{\prime}.$ Let $U$ and $V$ be respective open
neighborhoods of $x^{\prime}$ and $x_{0}$ in $\beta X$ with disjoint closures.
Choose $\varphi\in A\left(  X\right)  $ such that $\varphi=0$ on $U\cap X$ and
$\varphi=1$ on $V\cap X$ and set $f=\varphi\cdot\mathbf{u}$ for some fixed
$u\in E\smallsetminus\left\{  0\right\}  .$ For all $\alpha,$
\begin{align*}
\left(  Tf\right)  ^{\beta}\left(  y_{\alpha}\right)   &  =(T\mathbf{u}%
_{\alpha})^{\beta}\left(  y_{\alpha}\right)  ,\text{ where }u_{\alpha
}=f\left(  x_{\alpha}\right) \\
&  =0\text{ \ when }\alpha\text{ is sufficiently large.}%
\end{align*}
Thus $0=\left(  Tf\right)  ^{\beta}\left(  y_{0}\right)  =\left(  Tf\right)
\left(  y_{0}\right)  ,$ since $y_{0}\in Y.$ It follows by (\ref{FormulaTinv})
that%
\[
f^{\beta}\left(  x_{0}\right)  =\left(  T^{-1}\left(  Tf\right)  \right)
^{\beta}\left(  x_{0}\right)  =\left(  T^{-1}\mathbf{0}\right)  ^{\beta
}\left(  x_{0}\right)  =0,
\]
contradicting the fact that $f(x)=u\neq0$ for $x \in V\cap X.$
\end{proof}

\section{Spaces of continuous functions}

In this section, we apply the results of the previous section to the case
where $A(X,E)=C(X,E)$ and $A(Y,F)=C(Y,F)$. Obviously, these spaces are almost
normally multiplicative. Since the one-point compactification ${\mathbb{R}%
}_{\infty}$ of ${\mathbb{R}}$ is compact, every $f\in C(X)$ has a unique
extension to a continuous function $f^{\ast}:\beta X\rightarrow{\mathbb{R}%
}_{\infty}$. Recall that the \emph{Hewitt realcompactification} $\upsilon X$
of $X$ \cite{GJ} is the set
\[
\{x\in\beta X:f^{\ast}(x)\in{\mathbb{R}}\text{ for all }f\in C(X)\}.
\]
$X$ is said to be \emph{realcompact} if $X=\upsilon X$.

\begin{prop}
\label{prop3} Let $A(X,E)=C(X,E)$ and $A(Y,F)=C(Y,F)$ in Proposition \ref{Zx}.
Then the set $Z$ obtained in that Proposition is a subset of $\upsilon Y$.
\end{prop}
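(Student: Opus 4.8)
The plan is to show that each point $y \in Z$ lies in $\upsilon Y$, i.e.\ that $\varphi^*(y) \in \mathbb{R}$ for every $\varphi \in C(Y)$, equivalently that $\varphi$ is bounded on some net in $Y$ converging to $y$, or better, that the assumption $\varphi^*(y) = \infty$ for some $\varphi \in C(Y)$ leads to a contradiction via property (Z). First I would fix $y = y_x \in Z_x$ (the unique point of $Z_x$, by Lemma \ref{Singleton}) and suppose toward a contradiction that there is $\varphi \in C(Y)$ with $\varphi^*(y) = \infty$. Replacing $\varphi$ by $|\varphi|$ and then by $\varphi/(1+\varphi) \cdot (\text{something})$ is not quite enough, so instead I would work directly: choose $v \in F \smallsetminus \{0\}$ and consider the function $g = \varphi \cdot \mathbf{v} \in C(Y,F)$; since $T$ is onto (Proposition \ref{Zx} only assumes surjectivity, which suffices here), there is $f \in C(X,E)$ with $Tf = g$.

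The key step is to analyze $Z(T\mathbf{u})$ where $u = f(h(y)) = f(x)$. By Lemma \ref{LFormula}, there is a net $(y_\alpha)$ in $Y$ with $y_\alpha \to y$ and $(T\mathbf{u})(y_\alpha) = (Tf)(y_\alpha) = \varphi(y_\alpha) v$ for all $\alpha$. Now I distinguish cases on $u$. If $u = 0$, then $\varphi(y_\alpha) v = (T\mathbf{0})(y_\alpha) = 0$ for all $\alpha$, so $\varphi(y_\alpha) = 0$ for all $\alpha$; taking limits gives $\varphi^*(y) = 0 \ne \infty$, a contradiction. So $u \ne 0$, hence $Z(\mathbf{u}) = \emptyset$, and by (Z) applied to the single function $\mathbf{u}$ we get $Z(T\mathbf{u}) = \emptyset$ — that is, $(T\mathbf{u})(y') \ne 0$ for all $y' \in Y$. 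On the other hand $(T\mathbf{u})(y_\alpha) = \varphi(y_\alpha) v$, and since $\varphi^*(y) = \infty$, along a suitable subnet $\varphi(y_\alpha) \to \infty$; but a continuous $F$-valued function cannot behave this way on a net converging in $\beta Y$ while staying in the fixed ray $\{\varphi(y_\alpha) v\}$ — more precisely $(T\mathbf{u})^\beta(y)$ must be the limit in $\beta F$ of $\varphi(y_\alpha) v$, which (since $\varphi(y_\alpha)v$ leaves every bounded set) is a point of $\beta F \smallsetminus F$. I would then invoke almost normal multiplicativity: pick $\psi \in C(Y)$ bounded with $\psi = 0$ on a neighborhood of $y$ in $\beta Y$ and $\psi(y_0) = 1$ for a chosen $y_0 \in Y$ to separate, producing from $g$ and $T^{-1}$ a genuine zero of the relevant function, contradicting $Z(T\mathbf{u}) = \emptyset$.

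Actually the cleanest route avoids $\beta F$ subtleties: since $\varphi^*(y) = \infty$, the set $\{y' \in Y : \varphi(y') = 0\}$ has $y$ in its $\beta Y$-closure is false — rather, I use that $\varphi$ is \emph{unbounded} near $y$, so $1/\varphi$ extends continuously to $y$ with value $0$. Consider instead $g_1 = \mathbf{v} \in C(Y,F)$ (constant) and $g_2 = \varphi \cdot \mathbf{v}$; by surjectivity write $g_1 = Tf_1$. Then $Z(g_1) = \emptyset$ forces $Z(f_1) = \emptyset$ by (Z), so $f_1(x) \ne 0$ for the given $x$. Pick $\chi \in C(X)$ with $\chi(x) = 0$ and (using almost normality) $\chi = 1$ off a small neighborhood of $x$; then $\chi f_1 \in C(X,E)$ vanishes at $x$, and by Lemma \ref{LFormula}, $(T(\chi f_1))^\beta(y) = (T\mathbf{w})^\beta(y)$ where $w = \chi(x) f_1(x) = 0$, so $(T(\chi f_1))^\beta(y) = 0$. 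Meanwhile $Tf_1 = \mathbf{v}$ and $T(\chi f_1)$ agrees with $Tf_1 = \mathbf v$ wherever $\chi \equiv 1$... The essential mechanism throughout is: \emph{(Z) turns ``$u \ne 0$'' into ``$T\mathbf{u}$ has no zero in $Y$''}, while the net formula from Lemma \ref{LFormula} forces $T\mathbf{u}$ to take values $\varphi(y_\alpha)v$ arbitrarily close to a ``point at infinity'' of $F$, and one converts this to an actual zero (or a $0$ at $y \in Y$) by the usual partition-of-unity trick with $A(Y)$, as in the proofs of Lemmas \ref{Disjoint}--\ref{WellDefined}.

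The main obstacle I expect is handling the vector-valued target $F$: ``$\varphi^*(y) = \infty$'' refers to scalar behavior, but $T\mathbf{u}$ is $F$-valued, so I must carefully pass between $\beta Y$ (where $y$ lives), the scalar compactification $\mathbb{R}_\infty$, and $\beta F$, using that $(Tf)^\beta$ maps into $\beta F$. The trick is to keep everything scalar by always multiplying a \emph{fixed} nonzero vector $v \in F$ by scalar functions, so that $\varphi(y_\alpha) v \to 0$ in $F$ iff $\varphi(y_\alpha) \to 0$ in $\mathbb{R}$, and $\{\varphi(y_\alpha) v\}$ is bounded iff $\{\varphi(y_\alpha)\}$ is; this reduces the vector-valued statement to the scalar dichotomy already exploited in Section 2, and the contradiction is reached exactly as in Lemma \ref{Singleton}, with ``$\varphi$ unbounded at $y$'' replacing ``$\varphi = 0$ on a neighborhood''.
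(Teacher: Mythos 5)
There is a genuine gap in the case that drives the whole proof. Your setup is right: take $\psi\in C(Y)$ with $\psi^{\ast}(y)=\infty$, write $\psi\cdot\mathbf{v}=Tf$, and look at $u=f(h(y))$. The subcase $u=0$ does yield $\psi^{\ast}(y)=0$, a contradiction, exactly as you say. But in the subcase $u\neq 0$ you only establish two facts: $T\mathbf{u}$ has no zero in $Y$ (from (Z)), and $(T\mathbf{u})(y_{\alpha})=\psi(y_{\alpha})v$ blows up along a net converging to $y$. These are not contradictory --- a never-vanishing continuous $F$-valued function on $Y$ is perfectly free to be unbounded along a net converging to a point of $\beta Y\smallsetminus Y$ (think of $T\mathbf{u}(n)=nv$ on $Y=\mathbb{N}$), and its extension $(T\mathbf{u})^{\beta}(y)$ is simply allowed to land in $\beta F\smallsetminus F$. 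The three devices you propose to close this gap do not do so: the ``partition-of-unity trick'' cannot manufacture a zero of $T\mathbf{u}$, which is completely determined by $u$; the second attempt with $\chi f_{1}$ trails off and in fact never uses the hypothesis $\psi^{\ast}(y)=\infty$ (so it could not possibly produce a contradiction, since $Z$ is nonempty); and your final paragraph asserts that the contradiction is reached ``exactly as in Lemma \ref{Singleton}'' when the mechanism there (a function equal to $0$ near one point and $1$ near another) is genuinely different from ``unbounded near $y$.'' You also invoke $T^{-1}$, which is not available under the hypotheses of Proposition \ref{Zx} (only surjectivity is assumed).

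The missing idea is to transport the singularity of $\psi$ at $y$ back to $X$ as an honest zero. The paper normalizes $\psi\geq 1$, so $\psi^{\ast}$ is never $0$ on $\beta Y$, and defines $\varphi(x')=1/\psi^{\ast}(h^{-1}(x'))$ on $X$ (continuous because $h^{-1}:X\to Z\subseteq\beta Y$ is continuous), with $\varphi(h(y))=1/\infty=0$. Then $(\varphi f)(h(y))=0$ forces $(T(\varphi f))^{\beta}(y)=(T\mathbf{0})^{\beta}(y)=0$ \emph{regardless of whether $f(h(y))$ vanishes}, which is exactly what removes the troublesome subcase. The contradiction then comes from the other half: at points $y'\in Z$ near $y$ with $\psi^{\ast}(y')\in\mathbb{R}$ (which exist by density of $Z$ and of $Y$), the scalar identity $(T(c\mathbf{u}))^{\beta}(y')=c\,(T\mathbf{u})^{\beta}(y')$ with $c=\varphi(h(y'))=1/\psi^{\ast}(y')$ gives $(T(\varphi f))^{\beta}(y')=c\,\psi^{\ast}(y')v=v$, so by continuity $v=(T(\varphi f))^{\beta}(y)=0$. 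Without this reciprocal construction (or an equivalent device), your argument stalls at ``$T\mathbf{u}$ is unbounded and never zero,'' which is not absurd.
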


\begin{proof}
Suppose, to the contrary, that there exists $y_{0}\in Z\smallsetminus\upsilon
Y.$ Then there exists $\psi\in C\left(  Y\right)  $ such that $\psi\left(
y\right)  \geq1$ for all $y\in Y$ and $\psi^{\ast}\left(  y_{0}\right)
=\infty.$ Clearly $\psi^{\ast}\left(  y\right)  \neq0$ for all $y\in\beta Y.$
Define $\varphi:X\rightarrow\mathbb{R}$ by
\[
\varphi\left(  x\right)  =\frac{1}{\psi^{\ast}\left(  h^{-1}\left(  x\right)
\right)  } \text{\quad(taking }\frac{1}{\infty}=0\text{)}.
\]
Then $\varphi\in C\left(  X\right)  .$ Let $v\in F\smallsetminus\left\{
0\right\}  $ and choose $f\in C\left(  X,E\right)  $ such that $Tf=\psi
\cdot\mathbf{v}.$\newline

\noindent\textbf{Claim 1.} If $y\in Z$ and $\psi^{\ast}\left(  y\right)
\in\mathbb{R},$ then $\left(  T\left(  \varphi f\right)  \right)  ^{\beta
}\left(  y\right)  =v.$

\noindent\emph{Proof of Claim 1.} Suppose that $y\in Z$ and $\psi^{\ast
}\left(  y\right)  \in\mathbb{R}$. Let $c=\varphi\left(  h\left(  y\right)
\right)  $ and $u=f\left(  h\left(  y\right)  \right)  .$ By Proposition
\ref{Zx},
\begin{align*}
\left(  T\mathbf{u}\right)  ^{\beta}\left(  y\right)   &  =\left(  Tf\right)
^{\beta}\left(  y\right) \\
&  =\left(  \psi\cdot\mathbf{v}\right)  ^{\beta}\left(  y\right) \\
&  =\psi^{\ast}\left(  y\right)  v.
\end{align*}
Since $c\psi^{\ast}\left(  y\right)  =\varphi\left(  h\left(  y\right)
\right)  \psi^{\ast}\left(  y\right)  =1,$ we have
\begin{align*}
v  &  =c\psi^{\ast}\left(  y\right)  v\\
&  =c\left(  T\mathbf{u}\right)  ^{\beta}\left(  y\right) \\
&  =\left(  T\left(  c\cdot\mathbf{u}\right)  \right)  ^{\beta}\left(
y\right)  ,
\end{align*}
where the last equality holds because $c\in\mathbb{R}$ and $\left(
T\mathbf{u}\right)  ^{\beta}\left(  y\right)  \in F.$ Finally,
\begin{align*}
\left(  T\left(  \varphi f\right)  \right)  ^{\beta}\left(  y\right)   &
=\left(  T\mathbf{w}\right)  ^{\beta}\left(  y\right)  ,\text{ where
}w=\varphi\left(  h\left(  y\right)  \right)  f\left(  h\left(  y\right)
\right)  =cu,\\
&  =\left(  T\left(  c\cdot\mathbf{u}\right)  \right)  ^{\beta}\left(
y\right)  =v.
\end{align*}

\noindent\textbf{Claim 2.} There exists $\left(  y_{\alpha}\right)  $ in $Z$
converging to $y_{0}$ such that $\psi^{\ast}\left(  y_{\alpha}\right)
\in\mathbb{R}$ for all $\alpha.$

\noindent\emph{Proof of Claim 2.} It is enough to show that for all open
neighborhoods $V$ of $y_{0}$ in $\beta Y,$ there exists $y\in V\cap Z$ with
$\psi^{\ast}\left(  y\right)  \in\mathbb{R}$. Let $V$ be an open neighborhood
of $y_{0}$ in $\beta Y$ and choose $y_{1}\in V\cap Y.$ Since $\psi^{\ast
}\left(  y_{1}\right)  =\psi\left(  y_{1}\right)  \in\mathbb{R},$ there is an
open neighborhood $V_{1}$ of $y_{1}$ in $\beta Y$ such that $\psi^{\ast
}\left(  y\right)  \in\mathbb{R}$ for all $y\in V_{1}.$ Now $V\cap V_{1}$ is
an open set in $\beta Y$ containing $y_{1}$ and thus is nonempty. Since $Z$ is
dense in $\beta Y,$ $Z\cap V\cap V_{1}\neq\emptyset.$ If $y\in Z\cap V\cap
V_{1},$ then $y\in Z\cap V$ and $\psi^{\ast}\left(  y\right)  \in\mathbb{R}%
.$\newline

Returning to the proof of the proposition, let $\left(  y_{\alpha}\right)  $
be chosen using Claim 2. It follows from Claim 1 that
\[
v=\left(  T\left(  \varphi f\right)  \right)  ^{\beta}\left(  y_{\alpha
}\right)  \text{ for all }\alpha.
\]
Thus, $v=\left(  T\left(  \varphi f\right)  \right)  ^{\beta}\left(
y_{0}\right)  .$ On the other hand, by Proposition \ref{Zx},
\begin{align*}
\left(  T\left(  \varphi f\right)  \right)  ^{\beta}\left(  y_{0}\right)   &
=\left(  T\mathbf{u}_{0}\right)  ^{\beta}\left(  y_{0}\right)  ,\text{ where
}u_{0}=\varphi\left(  x_{0}\right)  f\left(  x_{0}\right)  \text{ and }%
x_{0}=h\left(  y_{0}\right)  ,\\
&  =\left(  T\mathbf{0}\right)  ^{\beta}\left(  y_{0}\right)  ,\text{ as
}\varphi\left(  x_{0}\right)  =\frac{1}{\psi^{\ast}\left(  y_{0}\right)
}=0,\\
&  =0.
\end{align*}
Hence $v=0,$ contrary to the choice of $v.$
\end{proof}

\begin{thm}
\label{thm4} Let $X$ and $Y$ be realcompact spaces and let $E$ and $F$ be
Hausdorff topological vector spaces. Suppose that $T:C(X,E)\rightarrow C(Y,F)$
is a vector space isomorphism that preserves common zeros. Then there are a
homeomorphism $h:Y\rightarrow X$ and, for each $y\in Y$, a vector space
isomorphism $S_{y}:E\rightarrow F$ so that
\begin{equation}
Tf(y)=S_{y}(f(h(y)))\text{ for all $f\in C(X,E)$ and all $y\in Y$.} \label{Tf}%
\end{equation}
Conversely, if a vector space isomorphism $T:C(X,E)\rightarrow C(Y,F)$ has the
form (\ref{Tf}), then $T$ preserves common zeros.
\end{thm}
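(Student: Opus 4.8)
The plan is to derive Theorem \ref{thm4} from Theorem \ref{thm2} together with Proposition \ref{prop3}. Applying Proposition \ref{prop3} to both $T$ and $T^{-1}$ (and using that $X$, $Y$ are realcompact, so $\upsilon X = X$ and $\upsilon Y = Y$), the dense sets $Z \subseteq \beta Y$ and $W \subseteq \beta X$ furnished by Theorem \ref{thm2} actually satisfy $Z \subseteq Y$ and $W \subseteq X$. Since $h \cup k^{-1} : Z \cup Y \to X \cup W$ is a homeomorphism with $Z \subseteq Y$ and $W \subseteq X$, the domain $Z \cup Y$ equals $Y$ and the range $X \cup W$ equals $X$, so $h : Y \to X$ is itself a homeomorphism (and $k = h^{-1}$). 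The formula in Theorem \ref{thm2} then reads $(Tf)^{\beta}(y) = (T\mathbf{u})^{\beta}(y)$ with $u = f(h(y))$ for all $y \in Y$; since $y \in Y$, both sides are honest values of functions in $C(Y,F)$, so this becomes $Tf(y) = (T\mathbf{u})(y)$ where $u = f(h(y))$.

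Next I would define, for each fixed $y \in Y$, the map $S_y : E \to F$ by $S_y(u) = (T\mathbf{u})(y)$. The displayed identity $Tf(y) = S_y(f(h(y)))$ is then immediate from the previous paragraph. It remains to check that each $S_y$ is a vector space isomorphism. Linearity of $S_y$ is clear from the linearity of $T$ and of $u \mapsto \mathbf{u}$: $\mathbf{u + v} = \mathbf{u} + \mathbf{v}$ and $\mathbf{\lambda u} = \lambda \mathbf{u}$, so $S_y(u+v) = T(\mathbf{u}+\mathbf{v})(y) = (T\mathbf{u})(y) + (T\mathbf{v})(y)$, etc. For injectivity: if $S_y(u) = 0$ then $Z(T\mathbf{u}) \ni y \neq \emptyset$, so by (Z) applied to the single function $\mathbf{u}$ we get $Z(\mathbf{u}) \neq \emptyset$, forcing $u = 0$. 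For surjectivity: given $w \in F$, apply the same argument to $T^{-1}$. Concretely, set $x = h(y)$; by the symmetric formula $T^{-1}g(x) = S'_x(g(k(x)))$ coming from Proposition \ref{Zx} applied to $T^{-1}$ (with $k = h^{-1}$, so $k(x) = y$), we have that $S'_x : F \to E$ is a linear map and an entirely parallel computation shows $S_y \circ S'_x = \mathrm{id}_F$ and $S'_x \circ S_y = \mathrm{id}_E$; in particular $S_y$ is onto. Alternatively, and more cleanly, one observes directly that $g \mapsto g(y)$ composed with $T^{-1}$ and the constant embedding gives a two-sided inverse of $S_y$.

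For the converse, suppose $T : C(X,E) \to C(Y,F)$ is a vector space isomorphism of the form $Tf(y) = S_y(f(h(y)))$ with $h : Y \to X$ a homeomorphism and each $S_y : E \to F$ a vector space isomorphism. I must verify (Z). Fix $f_1, \dots, f_k \in C(X,E)$. If $x \in \bigcap_i Z(f_i)$, put $y = h^{-1}(x)$; then $Tf_i(y) = S_y(f_i(h(y))) = S_y(f_i(x)) = S_y(0) = 0$ for every $i$, so $y \in \bigcap_i Z(Tf_i)$, which is therefore nonempty. Conversely, if $y \in \bigcap_i Z(Tf_i)$, then $S_y(f_i(h(y))) = 0$ for all $i$; since $S_y$ is injective, $f_i(h(y)) = 0$ for all $i$, so $h(y) \in \bigcap_i Z(f_i) \neq \emptyset$. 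This establishes (Z) in both directions and completes the proof.

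I do not expect a genuine obstacle here: the heavy lifting is done by Theorem \ref{thm2} and Proposition \ref{prop3}, and what remains is bookkeeping. The one point deserving slight care is the surjectivity of $S_y$ — one must make sure the inverse of $S_y$ is produced from $T^{-1}$ in a way that is compatible with the same point $y$ (equivalently, the point $h(y) \in X$), rather than accidentally invoking the formula at some point of $W \setminus X$; but since realcompactness has already collapsed $W$ into $X$, this is not an issue.
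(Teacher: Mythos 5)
Your proposal is correct and follows essentially the same route as the paper: combine Theorem \ref{thm2} with Proposition \ref{prop3} and realcompactness to force $Z=Y$, $W=X$, define $S_y(u)=(T\mathbf{u})(y)$, and get injectivity from (Z) and surjectivity from $T^{-1}$. The only spot worth a word more is the passage from $Z\subseteq Y$ to $Z=Y$ (the paper notes $k^{-1}(Y)=W\subseteq X=h(Z)$ forces $W=X$ and $Z=Y$), but this is a trivial completion of what you wrote.
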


\begin{proof}
By Theorem \ref{thm2}, there exist homeomorphisms $h:Z\rightarrow X$,
$k:W\rightarrow Y$ so that $h\cup k^{-1}:Z\cup Y\rightarrow X\cup W$ is a
homeomorphism. According to Proposition \ref{prop3}, $Z\subseteq\upsilon Y$.
Since $Y$ is realcompact$,$ we deduce that $Z\subseteq Y$ and hence $h$ is a
restriction of $k^{-1}.$ Similarly, $W\subseteq X.$ Then $k^{-1}\left(
Y\right)  =W\subseteq X=h\left(  Z\right)  $ . So we must have $X=W$ and
$Y=Z.$ Therefore, $h$ is a homeomorphism from $Y$ onto $X.$

For each $y\in Y,$ define a linear operator $S_{y}:E\rightarrow F$ by%
\[
S_{y}\left(  u\right)  =\left(  T\mathbf{u}\right)  \left(  y\right)  .
\]
If $u\in\ker S_{y},$ then $y\in Z\left(  T\mathbf{u}\right)  .$ Thus $Z\left(
T\mathbf{u}\right)  \neq\emptyset$ and hence $Z\left(  \mathbf{u}\right)
\neq\emptyset.$ Consequently, $u=0.$ The surjectivity of $S_{y}$ follows
easily from that of $T.$ Therefore, each $S_{y}$ is a linear isomorphism.
Since $Z=Y,$ it follows from Theorem \ref{thm2} that%
\[
\left(  Tf\right)  \left(  y\right)  =S_{y}(f\left(  h(y)\right)  )\text{ for
all }y\in Y.
\]
The converse is clear.
\end{proof}

The following corollary of Theorem \ref{thm4} contains the result conjectured
in \cite{EO}.  The original conjecture was solved by Chen, Chen and Wong \cite{CCW} and independently by Ercan and \"{O}nal \cite{EO2}.

\begin{thm}
\label{lattice} Let $X$ and $Y$ be realcompact spaces and let $E$ and $F$ be
Hausdorff topological vector lattices. Suppose that $T:C(X,E)\rightarrow
C(Y,F)$ is a vector lattice isomorphism so that
\[
Z(f)\neq\emptyset\iff Z(Tf)\neq\emptyset.
\]
Then there is a homeomorphism $h:Y\rightarrow X$ and, for each $y\in Y$, a
vector lattice isomorphism $S_{y}:E\rightarrow F$ so that
\[
Tf(y)=S_{y}(f\left(  h(y)\right)  )\text{ for all $f\in C(X,E)$ and all $y\in
Y$.}%
\]

\end{thm}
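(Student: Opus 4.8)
The plan is to obtain this theorem as a direct corollary of Theorem \ref{thm4}. The key observation is that for a vector lattice isomorphism $T$, the single-zero-set condition $Z(f) \neq \emptyset \iff Z(Tf) \neq \emptyset$ already implies the full common-zeros property (Z). Granting that, Theorem \ref{thm4} immediately yields a homeomorphism $h : Y \to X$ and vector space isomorphisms $S_y : E \to F$ with $Tf(y) = S_y(f(h(y)))$; it then only remains to check that each $S_y$ is in fact a \emph{lattice} isomorphism, which should follow from the lattice structure being evaluated pointwise.

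The first step, then, is to verify that (Z$_0$) is equivalent to (Z) for a vector lattice isomorphism $T$. For a finite family $(f_i)_{i=1}^k$ in $C(X,E)$, set $g = \bigvee_{i=1}^k |f_i| \in C(X,E)$ (pointwise lattice operations make this well defined and continuous). Then $Z(g) = \bigcap_{i=1}^k Z(f_i)$, since $g(x) = 0$ iff $|f_i(x)| = 0$ for every $i$ iff $f_i(x) = 0$ for every $i$. Because $T$ is a vector lattice isomorphism, $T$ commutes with finite suprema and with absolute values, so $Tg = \bigvee_{i=1}^k |Tf_i|$, whence $Z(Tg) = \bigcap_{i=1}^k Z(Tf_i)$. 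Applying (Z$_0$) to the single function $g$ gives $\bigcap_{i=1}^k Z(f_i) = Z(g) \neq \emptyset \iff Z(Tg) = \bigcap_{i=1}^k Z(Tf_i) \neq \emptyset$, which is exactly (Z). Hence $T$ preserves common zeros and Theorem \ref{thm4} applies, producing $h$ and the linear isomorphisms $S_y$ with $S_y(u) = (T\mathbf{u})(y)$ and $Tf(y) = S_y(f(h(y)))$.

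The second step is to upgrade each $S_y$ to a vector lattice isomorphism. Since $S_y(u) = (T\mathbf{u})(y)$ and $u \mapsto \mathbf{u}$ is a lattice homomorphism from $E$ into $C(X,E)$ (constant functions, lattice operations pointwise), while evaluation at $y$ is a lattice homomorphism from $C(Y,F)$ onto $F$, and $T$ is a lattice homomorphism between these, it follows that $S_y$ is the composition of lattice homomorphisms, hence a lattice homomorphism. We already know from Theorem \ref{thm4} that $S_y$ is a bijection; its inverse is obtained symmetrically as $S_y^{-1}(v) = (T^{-1}\mathbf{v})(h^{-1} \cdots)$ — more precisely, since $h$ is a homeomorphism and (Z) holds for $T^{-1}$ as well, the same construction applied to $T^{-1}$ shows $S_y^{-1}$ is also a lattice homomorphism. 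Therefore $S_y$ is a vector lattice isomorphism, completing the proof.

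I do not anticipate a serious obstacle here: the only point requiring care is the identity $Tg = \bigvee_i |Tf_i|$, which uses that a vector lattice isomorphism preserves both the modulus and finite lattice operations — standard, but worth stating explicitly. Everything else is bookkeeping with the pointwise lattice structure on $C(X,E)$ and $C(Y,F)$.
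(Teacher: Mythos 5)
Your proposal is correct and follows essentially the same route as the paper: reduce (Z$_0$) to (Z) via $g=\bigvee_i|f_i|$ and the fact that a lattice isomorphism preserves moduli and finite suprema, invoke Theorem \ref{thm4}, and then note that $S_y(u)=(T\mathbf{u})(y)$ preserves the modulus (the paper phrases this as $S_y(|u|)=|S_y(u)|$ from $T|\mathbf{u}|=|T\mathbf{u}|$, which is the same as your composition-of-lattice-homomorphisms observation). Your extra argument for $S_y^{-1}$ is unnecessary since a bijective lattice homomorphism is automatically a lattice isomorphism, but it does no harm.
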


\begin{proof}
We first show that $T$ preserves common zeros. Indeed, if $f_{1},\cdots
,f_{k}\in C\left(  X,E\right)  ,$ set $f=\left\vert f_{1}\right\vert
\vee\cdots\vee\left\vert f_{k}\right\vert .$ It is clear that $Z\left(
f\right)  =\cap_{i=1}^{k}Z(f_{i}).$ Since $T$ is a lattice isomorphism,
$Tf=\left\vert Tf_{1}\right\vert \vee\cdots\vee\left\vert Tf_{k}\right\vert .$
Thus,%
\begin{align*}
\emptyset &  \neq\cap_{i=1}^{k}Z(f_{i})=Z\left(  f\right) \\
&  \Leftrightarrow Z\left(  Tf\right)  \neq\emptyset,\text{ by hypothesis,}\\
&  \Leftrightarrow\emptyset\neq Z\left(  \left\vert Tf_{1}\right\vert
\vee\cdots\vee\left\vert Tf_{k}\right\vert \right)  =\cap_{i=1}^{k}Z(Tf_{i}).
\end{align*}
By Theorem \ref{thm4} , we obtain a homeomorphism $h : Y \to X$ and vector
space isomorphisms $S_{y},$ $y\in Y$, satisfying (\ref{Tf}). For any $y\in Y$
and $u\in E,$ the equation $\left(  T\left\vert \mathbf{u}\right\vert \right)
\left(  y\right)  =\left\vert T\mathbf{u}\right\vert \left(  y\right)  $ shows
that $S_{y}\left(  \left\vert u\right\vert \right)  =\left\vert S_{y}\left(
u\right)  \right\vert .$ Thus $S_{y}$ is a vector lattice isomorphism.
\end{proof}

Theorem \ref{thm4} holds equally if the scalar field is assumed to be
${\mathbb{C}}$.

\begin{thm}
\label{C*} Let $X$ and $Y$ be realcompact spaces and let $E$ and $F$ be
$C^{\ast}$-algebras. Suppose that $T:C(X,E)\rightarrow C(Y,F)$ is a $\ast
$-algebra isomorphism so that
\[
Z(f)\neq\emptyset\iff Z(Tf)\neq\emptyset.
\]
Then there is a homeomorphism $h:Y\rightarrow X$ and, for each $y\in Y$, a
$C^{\ast}$-algebra isomorphism $S_{y}:E\rightarrow F$ so that
\[
Tf(y)=S_{y}(f\circ h(y))\text{ for all $f\in C(X,E)$ and all $y\in Y$.}%
\]

\end{thm}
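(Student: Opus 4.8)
The plan is to reduce Theorem \ref{C*} to Theorem \ref{thm4} in essentially the same way that Theorem \ref{lattice} was handled, replacing the lattice machinery by the functional calculus and order structure of $C^{*}$-algebras. The first step is to verify that a $*$-algebra isomorphism $T:C(X,E)\to C(Y,F)$ satisfying $Z(f)\neq\emptyset\iff Z(Tf)\neq\emptyset$ actually preserves common zeros. For $f_{1},\dots,f_{k}\in C(X,E)$, the natural surrogate for $|f_{1}|\vee\cdots\vee|f_{k}|$ is $f=\sum_{i=1}^{k}f_{i}^{*}f_{i}\in C(X,E)$, a pointwise sum of positive elements; then $Z(f)=\bigcap_{i=1}^{k}Z(f_{i})$ because in a $C^{*}$-algebra a sum of positive elements vanishes iff each summand does (equivalently, $\sum a_{i}^{*}a_{i}=0$ forces $\|a_{i}\|^{2}=\|a_{i}^{*}a_{i}\|\le\|\sum a_{j}^{*}a_{j}\|=0$). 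Since $T$ is a $*$-homomorphism, $Tf=\sum_{i=1}^{k}(Tf_{i})^{*}(Tf_{i})$, and the identical argument gives $Z(Tf)=\bigcap_{i=1}^{k}Z(Tf_{i})$. Chaining these through the hypothesis yields (Z).

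Next, apply Theorem \ref{thm4} to get a homeomorphism $h:Y\to X$ and vector space isomorphisms $S_{y}:E\to F$, $y\in Y$, with $Tf(y)=S_{y}(f(h(y)))$. It remains to show each $S_{y}$ is a $*$-algebra isomorphism. Multiplicativity: for $u,w\in E$ one has $\mathbf{u}\cdot\mathbf{w}=\mathbf{uw}$ in $C(X,E)$, and applying $T$ (a homomorphism) and evaluating at $y$ gives $S_{y}(uw)=T(\mathbf{uw})(y)=\bigl(T\mathbf{u}\cdot T\mathbf{w}\bigr)(y)=T\mathbf{u}(y)\,T\mathbf{w}(y)=S_{y}(u)S_{y}(w)$. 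The $*$-property is the same: $T(\mathbf{u}^{*})=(T\mathbf{u})^{*}$, so $S_{y}(u^{*})=S_{y}(u)^{*}$. Thus each $S_{y}$ is a $*$-algebra isomorphism (it is already bijective by Theorem \ref{thm4}), and automatically isometric and bipositive since $*$-isomorphisms of $C^{*}$-algebras are, so ``$C^{*}$-algebra isomorphism'' is justified.

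For the concluding assertion promised in the abstract — that $X\cong Y$ and $E$ is $C^{*}$-isomorphic to $F$ — note that $h$ is the required homeomorphism and $S_{y}$ (for any single $y\in Y$) is the required $C^{*}$-isomorphism of $E$ onto $F$. The only mild subtlety is that Theorem \ref{thm4} is stated over either scalar field, so one invokes the $\mathbb{C}$-version remarked on just before the statement; everything else is formal. I expect no genuine obstacle here: the one point requiring a moment's care is the claim $Z\bigl(\sum f_{i}^{*}f_{i}\bigr)=\bigcap Z(f_{i})$, i.e. that positivity prevents cancellation, but this is the standard $C^{*}$-identity argument given above and goes through pointwise in $E$ (and in $F$) without any completeness or unitality assumption.
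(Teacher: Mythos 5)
Your proposal is correct and follows the paper's own route: reduce to Theorem \ref{thm4} by showing (Z) via $f=\sum f_i^*f_i$ (the paper uses $\sum f_if_i^*$, an immaterial difference) and the standard fact that a sum of positive elements vanishes iff each summand does, then verify each $S_y$ is a $*$-isomorphism on constants. No gaps.
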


\begin{proof}
If $f_{1},\cdots,f_{k}\in C\left(  X,E\right)  $,\ let $f=\sum_{i=1}^{k}%
f_{i}f_{i}^{\ast}.$ It is clear that $Z\left(  f\right)  =\cap_{i=1}%
^{k}Z(f_{i}).$ Since $T$ is a $\ast$-isomorphism, $Tf=\sum_{i=1}^{k}%
Tf_{i}\left(  Tf_{i}\right)  ^{\ast}.$ Thus,%
\begin{align*}
\emptyset &  \neq\cap_{i=1}^{k}Z(f_{i})=Z\left(  f\right) \\
&  \Leftrightarrow Z\left(  Tf\right)  \neq\emptyset,\text{ by hypothesis,}\\
&  \Leftrightarrow\emptyset\neq Z(%
%TCIMACRO{\tsum _{i=1}^{k}}%
%BeginExpansion
{\textstyle\sum_{i=1}^{k}}
%EndExpansion
Tf_{i}\left(  Tf_{i}\right)  ^{\ast})=\cap_{i=1}^{k}Z(Tf_{i}).
\end{align*}
Therefore $T$ preserves common zeros. The rest of the proof follows along the
lines of the proof of Theorem \ref{lattice}.
\end{proof}

\section{Spaces of differentiable functions}

In this section, we fix $p,q\in{\mathbb{N}}$ and let $X$ and $Y$ be open
subsets of ${\mathbb{R}}^{p}$ and ${\mathbb{R}}^{q}$ respectively. The results
of \S \ref{sec1} are applied to spaces of differentiable functions
$C^{m}(X,E)$ and $C^{n}(Y,F)$, where $m,n\in{\mathbb{N}}\cup\{0, \infty\}$.
Here $C^{m}\left(  X,E\right)  $ denotes the space of functions from $X$ into
$E$ having continuous partial derivatives of all order $<m+1\left(  \infty
+1=\infty\right)  .$ Note that the spaces $C^{m}(X)$ and $C^{n}(Y)$ are almost
normal and thus $C^{m}(X,E)$ and $C^{n}(Y,F)$ are almost normally multiplicative.

\begin{lem}
\label{lem1}Let $X$ and $Y$ be open subsets of ${\mathbb{R}}^{p}$ and
${\mathbb{R}}^{q}$ respectively, $p,q\in{\mathbb{N}}$. Suppose that $Z$ is a
dense subspace of $\beta Y$ that is homeomorphic to $X$. Then $Z\subseteq Y$.
\end{lem}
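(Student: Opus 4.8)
The plan is to argue by contradiction: suppose the homeomorphism $\varphi \colon X \to Z$ carries some point $x_0 \in X$ to a point $z_0 \in Z \subseteq \beta Y$ that does not lie in $Y$. I would then try to produce a function in $C(Y)$ (or use the structure of $\beta Y$ near $z_0$) to derive a contradiction with the fact that $Z$, being homeomorphic to the open subset $X$ of $\mathbb{R}^p$, must be locally compact, separable, metrizable, and in particular first countable.

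First I would record the key topological features of $Z$ inherited from $X$: $Z$ is locally compact (since $X$ is), so $Z$ is open in its closure $\overline{Z}^{\beta Y} = \beta Y$; hence $Z$ is open in $\beta Y$. Thus there is an open set $U$ in $\beta Y$ with $U \cap \beta Y = U \subseteq Z$ and $z_0 \in U$; intersecting with a compact neighborhood coming from the local compactness of $X$, I may assume $\overline{U}^{\beta Y} \subseteq Z$ is compact. Now $z_0 \in U \subseteq Z$, but $z_0 \notin Y$. Since $U$ is open in $\beta Y$ and $Y$ is dense in $\beta Y$, the set $U \cap Y$ is a nonempty open subset of $Y$ whose closure in $\beta Y$ is contained in $\overline{U}^{\beta Y} \subseteq Z$. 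The point of this is that $z_0$ is then a point of $\beta Y \setminus Y$ that is an interior point (in $\beta Y$) of the closure of a subset of $Y$ — equivalently, $z_0$ lies in the $\beta Y$-closure of $U \cap Y$, a cozero-type neighborhood situation.

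Next I would exploit first countability of $Z$ at $z_0$: there is a sequence $(y_n)$ in $U \cap Y \subseteq Z$ converging to $z_0$ in $\beta Y$. Since $Z$ is metrizable and locally compact and $\overline{U}^{\beta Y} \subseteq Z$ is compact, the sequence $(y_n)$ together with its limit $z_0$ sits inside a compact metrizable subset of $Z$. Now $(y_n)$ is a sequence in $Y$ that is not eventually constant and converges in $\beta Y$ to a point outside $Y$; I would then build a continuous function $g \in C(Y)$ that is unbounded along $(y_n)$, for instance by choosing the $y_n$ to be a discrete closed subset of $Y$ (possible because $Y \subseteq \mathbb{R}^q$ is metric and $z_0 \notin Y$, so the $y_n$ accumulate only at $z_0 \notin Y$) and setting $g(y_n) = n$, extended continuously by Tietze. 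Then $g^*(z_0) = \infty$, so $g$ does not extend to a real-valued continuous function at $z_0$; but since $\overline{U}^{\beta Y} \subseteq Z$ is compact and $Z$ carries the subspace topology, the restriction $g^*|_{\overline{U}^{\beta Y}}$ would have to be continuous into $\mathbb{R}_\infty$ with the value $\infty$ attained at an isolated-type point — and I would reach a contradiction by instead considering a bounded continuous function on $Y$ that separates $z_0$ from $Y$: no such function exists because a continuous real-valued function on the compact metrizable set $\overline{U}^{\beta Y} \subseteq Z$ that vanishes on the dense subset $U \cap Y$ must vanish at $z_0$, forcing $z_0$ to be a limit in a way compatible with $Y$; more directly, $z_0 \in \overline{U \cap Y}^{\beta Y}$ with $U \cap Y$ realcompact (open in $\mathbb{R}^q$) shows $z_0 \in \upsilon(U \cap Y) = U \cap Y \subseteq Y$, contradicting $z_0 \notin Y$.

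The main obstacle is making the last step rigorous: one must pass cleanly between the ambient compactification $\beta Y$ and the realcompactness of the open Euclidean set $Y$ (or the open piece $U \cap Y$), and justify that a point of $Z$ possessing a compact metrizable neighborhood inside $Z$, which also lies in the $\beta Y$-closure of an open realcompact subset of $Y$, must actually belong to that subset. I expect the cleanest route is: local compactness of $Z$ gives a compact $\overline{U}^{\beta Y} \subseteq Z$; density of $Y$ gives $U \cap Y$ dense in $U$ hence $z_0 \in \overline{U \cap Y}^{\beta Y}$; and then apply the standard fact (as used in Proposition \ref{prop3}) that a point of $\beta(U\cap Y)$ lying in $\upsilon(U \cap Y)$ cannot map to a point where some $\psi \in C(U\cap Y)$ blows up — together with realcompactness of $U \cap Y$ — to conclude $z_0 \in U \cap Y$, the desired contradiction.
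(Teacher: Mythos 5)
Your first half is sound and runs parallel to the paper's proof: assuming some $z_0\in Z\setminus Y$, you use local compactness and first countability of $Z$ (inherited from the open set $X\subseteq{\mathbb{R}}^p$), the resulting openness of $Z$ in $\beta Y$, and the density of $Y$ in $\beta Y$ to produce a sequence $(y_n)$ of distinct points of $Y$ converging in $\beta Y$ to $z_0$, with $\{y_n\}$ closed and discrete in $Y$. The paper obtains the same sequence by pulling back the balls $B(x_0,\tfrac1n)$ under the homeomorphism $h$ and using density of $Z$; either route is fine.

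The gap is in the endgame: you never actually derive a contradiction from the existence of this sequence. Your unbounded-function route only shows $g^{\ast}(z_0)=\infty$ for some $g\in C(Y)$, i.e.\ $z_0\notin\upsilon Y$; nothing in the hypotheses forbids that, and a continuous map from the compact metrizable set $\overline{U}^{\beta Y}$ into ${\mathbb{R}}_{\infty}$ may perfectly well take the value $\infty$ at a non-isolated point, so no contradiction results. The paragraph about bounded functions vanishing on dense subsets is not an argument. The realcompactness route is genuinely wrong as stated: $\overline{U\cap Y}^{\beta Y}$ is not $\beta(U\cap Y)$ in general, because an open subset of $Y$ need not be $C^{\ast}$-embedded in $Y$ (e.g.\ $(0,1)$ in ${\mathbb{R}}$, where $\sin(1/x)$ has no continuous extension, so $\overline{(0,1)}^{\beta{\mathbb{R}}}=[0,1]\neq\beta(0,1)$); hence $\upsilon(U\cap Y)$ does not live inside $\overline{U\cap Y}^{\beta Y}$ and you cannot conclude $z_0\in\upsilon(U\cap Y)=U\cap Y$. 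What is actually needed is the standard fact that no point of $\beta Y\smallsetminus Y$ is the limit of a sequence of distinct points of $Y$, which the paper proves inline: since $\{y_n\}$ is closed and discrete in the normal space $Y$, choose $g\in C(Y)$ with $0\le g\le1$, $g(y_{2n})=1$ and $g(y_{2n-1})=0$; the continuous extension $g^{\#}:\beta Y\to[0,1]$ would then have to satisfy $g^{\#}(z_0)=0$ and $g^{\#}(z_0)=1$ simultaneously. Inserting this oscillating-function step is what closes the proof.
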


\begin{proof}
Suppose that $h:Z\rightarrow X$ is a homeomorphism. If $Z$ $\nsubseteqq Y,$
then there exists $y_{0}\in Z\smallsetminus Y$ such that $h\left(
y_{0}\right)  =x_{0}\in X.$ For each $n$, let $U_{n}$ denote the open set
$h^{-1}\left(  B\left(  x_{0},\frac{1}{n}\right)  \right)  $ in $Z.$ Then
$U_{n}=V_{n}\cap Z$ for some open subset $V_{n}$ of $\beta Y.$ Since $Y$ is
locally compact, it is open in $\beta Y.$ Thus, $V_{n}\cap Y$ is a nonempty
open set in $\beta Y$. Therefore, $V_{n}\cap Y\cap Z\neq\emptyset.$ For each
$n,$ pick $y_{n}\in V_{n}\cap Y\cap Z.$ Since $h\left(  y_{n}\right)
\rightarrow x_{0}$ and $h^{-1}$ is continuous, $\left(  y_{n}\right)  $
converges to $y_{0}.$ We may assume without loss of generality that $\left(
y_{n}\right)  $ is pairwise distinct and has no accumulation point in $Y.$
There exists $g\in C\left(  Y\right)  $ such that $0\leq g\leq1,$ $g\left(
y_{2n-1}\right)  =0$ and $g\left(  y_{2n}\right)  =1$ for all $n.$ Consider
the continuous extension $g^{\#}:\beta Y\rightarrow\left[  0,1\right]  $ of
$g.$ By the continuity of $g^{\#},$
\[
%\begin{align*}
1 =\lim_{n\rightarrow\infty}g\left(  y_{2n}\right)
%\\
=g^{\#}\left(  y_{0}\right)  =\lim_{n\rightarrow\infty}g\left(  y_{2n-1}%
\right)  =0,
%\end{align*}
\]
a contradiction.
\end{proof}

\begin{lem}
\noindent\label{Cancellation}Let $\varphi$ be a real-valued function on $Y$
and $f$ be a function in $C\left(  Y,F\right)  $ that is never zero$.$ Assume
that $\lim_{y\rightarrow y_{0}}\varphi\left(  y\right)  f\left(  y\right)  =v$
exists. Then $\lim_{y\rightarrow y_{0}}\varphi\left(  y\right)  =a$ exists and
$a\cdot f\left(  y_{0}\right)  =v$.
\end{lem}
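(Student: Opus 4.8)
The plan is to extract the limit $a$ by dividing by a nonzero continuous linear functional, then upgrade the scalar convergence to the vector identity. First, observe that since $f(y_0)\neq 0$ and $F$ is a Hausdorff topological vector space, the Hahn-Banach-type separation available in locally convex spaces would give a functional not vanishing at $f(y_0)$; but $F$ is only assumed to be a Hausdorff TVS, so I cannot rely on continuous linear functionals in general. Instead I would argue directly inside $F$. Fix a balanced neighborhood $V$ of $0$ in $F$ with $f(y_0)\notin V$; such a $V$ exists by the Hausdorff property. By continuity of $f$ at $y_0$, there is a neighborhood $U_0$ of $y_0$ with $f(y)\in f(y_0)+V'$ for all $y\in U_0\cap Y$, where $V'$ is chosen small enough that $f(y)$ stays outside some fixed balanced neighborhood $W$ of $0$ for $y$ near $y_0$.

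The key step is to show the \emph{scalars} $\varphi(y)$ stay bounded and then converge as $y\to y_0$. Suppose first that $\varphi$ is unbounded on every neighborhood of $y_0$: then there is a net $(y_\gamma)\to y_0$ with $|\varphi(y_\gamma)|\to\infty$, so $\varphi(y_\gamma)^{-1}\to 0$ and hence $\varphi(y_\gamma)^{-1}\bigl(\varphi(y_\gamma)f(y_\gamma)\bigr)\to 0\cdot v=0$ in $F$ (using that scalar multiplication is continuous and $\varphi(y_\gamma)f(y_\gamma)\to v$ is a convergent, hence eventually bounded, net — more carefully, $\varphi(y_\gamma)^{-1}v\to 0$, and $\varphi(y_\gamma)^{-1}\bigl(\varphi(y_\gamma)f(y_\gamma)-v\bigr)\to 0$ since the second factor $\to 0$ and the scalars are bounded). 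But $\varphi(y_\gamma)^{-1}\varphi(y_\gamma)f(y_\gamma)=f(y_\gamma)\to f(y_0)\neq 0$, a contradiction. Hence $\varphi$ is bounded near $y_0$; say $|\varphi(y)|\le M$ on $U_0\cap Y$. Now take any two subnets along which $\varphi(y)\to a_1$ and $\varphi(y)\to a_2$ respectively (possible by boundedness, passing to subnets). Along the first, $\varphi(y)f(y)\to a_1 f(y_0)$ by joint continuity of scalar multiplication, so $a_1 f(y_0)=v$; similarly $a_2 f(y_0)=v$. Thus $(a_1-a_2)f(y_0)=0$, and since $f(y_0)\neq 0$ and $F$ is a nontrivial Hausdorff TVS (so $\lambda u=0$ with $u\neq 0$ forces $\lambda=0$), we get $a_1=a_2$. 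Therefore every convergent subnet of $(\varphi(y))$ has the same limit $a$, and by boundedness $\lim_{y\to y_0}\varphi(y)=a$ exists. Passing to the limit in $\varphi(y)f(y)\to v$ then gives $a\cdot f(y_0)=v$.

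The main obstacle is the first sub-step: ruling out unbounded $\varphi$ without recourse to a norm or to continuous functionals on $F$. The clean way is the division trick above — if $|\varphi(y_\gamma)|\to\infty$ then the \emph{reciprocals} tend to $0$, and multiplying the convergent net $\varphi(y_\gamma)f(y_\gamma)\to v$ by those reciprocals forces $f(y_\gamma)\to 0$, contradicting $f(y_0)\neq 0$ by continuity of $f$. One has to be slightly careful that ``$\varphi(y)f(y)\to v$'' is only along the original filter of neighborhoods of $y_0$ in $Y$, so all nets here should be indexed by that neighborhood filter (together with any auxiliary index), and ``$y\to y_0$'' means convergence in $Y$; since $X,Y$ are metrizable (open in Euclidean space) one may freely replace nets by sequences throughout, which makes the boundedness and subsequence extraction arguments completely elementary.
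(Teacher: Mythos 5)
Your proposal is correct and follows essentially the same route as the paper: first rule out unboundedness of $\varphi$ near $y_0$ by multiplying the convergent net $\varphi(y)f(y)\to v$ by the reciprocals $\varphi(y)^{-1}\to 0$ (the paper phrases this as $f(y_n)-v/\varphi(y_n)\in\frac{1}{\varphi(y_n)}U\subseteq U$ for circled $U$), forcing $f(y_0)=0$, a contradiction; then use boundedness to extract convergent subsequences and the fact that $f(y_0)\neq 0$ to show all subsequential limits of $\varphi$ agree. The opening digression about separating functionals is unnecessary but harmless, and the remark that sequences suffice matches the paper's use of sequences.
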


\begin{proof}
We first show that $\varphi$ is bounded in a neighborhood of $y_{0}$. Suppose
otherwise. Then there is a sequence $\left(  y_{n}\right)  $ converging
nontrivially to $y_{0}$ such that $\left\vert \varphi\left(  y_{n}\right)
\right\vert \geq n$ for all $n.$ Since $\lim_{y\rightarrow y_{0}}%
\varphi\left(  y\right)  f\left(  y\right)  =v$ exists, for any circled
neighborhood $U$ of $0,$ $\varphi\left(  y_{n}\right)  f\left(  y_{n}\right)
\in v+U$ for sufficiently large $n.$ Thus
\[
f\left(  y_{n}\right)  -\frac{v}{\varphi\left(  y_{n}\right)  }\in\frac
{1}{\varphi\left(  y_{n}\right)  } U\subseteq U
\]
for sufficiently large $n.$ Hence $\lim_{n\rightarrow\infty}\left(  f\left(
y_{n}\right)  -\frac{v}{\varphi\left(  y_{n}\right)  }\right)  =0.\,\ $By the
continuity of $f,\ f\left(  y_{0}\right)  =0,$ a contradiction.

Since $\varphi$ is bounded in a neighborhood of $y_{0},$ every sequence
$\left(  y_{n}\right)  $ that converges nontrivially to $y_{0}$ has a
subsequence $\left(  y_{n_{k}}\right)  $ such that $\left(  \varphi\left(
y_{n_{k}}\right)  \right)  $ converges. Suppose that $\left(  y_{n}\right)  $
and $\left(  z_{n}\right)  $ are sequences converging nontrivially to $y_{0}$
such that
\[
\lim_{n\rightarrow\infty}\varphi\left(  y_{n}\right)  =L_{1}\text{ and }%
\lim_{n\rightarrow\infty}\varphi\left(  z_{n}\right)  =L_{2}.\text{ }%
\]
Then
\[
L_{1}f\left(  y_{0}\right)  =\lim_{n\rightarrow\infty}\varphi\left(
y_{n}\right)  f\left(  y_{n}\right)  =\lim_{n\rightarrow\infty}\varphi\left(
z_{n}\right)  f\left(  z_{n}\right)  =L_{2}f\left(  y_{0}\right)  .
\]
Since $f\left(  y_{0}\right)  \neq0,$ $L_{1}=L_{2}.$ Hence $\lim_{y\rightarrow
y_{0}}\varphi\left(  y\right)  =a$ exists. Clearly $a\cdot f\left(
y_{0}\right)  =v.$
\end{proof}

Let $r\in\mathbb{N}$. A \emph{multi-index} $\lambda$ is an $r$-tuple $\left(
\lambda_{1},...,\lambda_{r}\right)  $ with entries in $\mathbb{N}\cup\left\{
0\right\}  ,$ which will also be regarded as a vector in $\mathbb{R}^{r}.$ The
\emph{order} of $\lambda$ is $\left\vert \lambda\right\vert =\lambda
_{1}+\cdots+\lambda_{r}$. If $f$ is a function of $r$ variables, we denote by
$\partial^{\lambda}f$ the partial derivative $\left(  \partial^{1}\right)
^{\lambda_{1}}\cdots\left(  \partial^{r}\right)  ^{\lambda_{r}}f.$

\begin{lem}
\label{ProductRule}Let $n\in\mathbb{N}\cup\left\{  0\right\}  .$ If $\varphi$
is real-valued function on $Y$ and $\varphi f\in C^{n}\left(  Y,F\right)
\ $for some $f\in C^{n}\left(  Y,F\right)  $ that is never zero$,$ then
$\varphi\in C^{n}\left(  Y\right)  .$
\end{lem}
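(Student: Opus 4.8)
The plan is to prove $\varphi \in C^n(Y)$ by induction on $n$, working locally near an arbitrary point $y_0 \in Y$. The base case $n = 0$ is essentially Lemma \ref{Cancellation}: since $f$ is never zero and $\varphi f = (\varphi f)$ is continuous, applying that lemma at each point $y_0$ gives $\lim_{y \to y_0} \varphi(y) = \varphi(y_0)$, so $\varphi$ is continuous. (One should note that continuity of $\varphi$ at $y_0$ already follows from writing $\varphi = \langle \varphi f, \xi \rangle / \langle f, \xi \rangle$ locally for a suitable continuous linear functional $\xi$ on $F$ with $\langle f(y_0), \xi \rangle \neq 0$; such $\xi$ exists because $F$ is a Hausdorff topological vector space, hence has separating dual on any single nonzero vector, though a priori $\xi$ need not be continuous — so it is cleaner to invoke Lemma \ref{Cancellation} directly, which handles the topology correctly.)

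For the inductive step, suppose the result holds for $n - 1$ (with $n \geq 1$), and assume $\varphi f \in C^n(Y,F)$ with $f \in C^n(Y,F)$ never zero. First I would fix $y_0 \in Y$ and a continuous linear functional $\xi$ on $F$ with $\langle f(y_0), \xi \rangle \neq 0$; by continuity there is a neighborhood $U$ of $y_0$ on which $\langle f(y), \xi \rangle \neq 0$. On $U$ we have the scalar identity
\[
\varphi = \frac{\langle \varphi f, \xi \rangle}{\langle f, \xi \rangle}.
\]
Both numerator and denominator are scalar $C^n$ functions on $U$ (composition of a continuous linear map with a $C^n$ vector-valued function is $C^n$), and the denominator is nowhere zero on $U$, so $\varphi \in C^n(U)$ by the quotient rule for scalar differentiable functions. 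Since $y_0$ was arbitrary, $\varphi \in C^n(Y)$.

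Actually the induction is not even needed once one has the continuous-functional trick — but I would keep the statement flexible because the case $n = 0$ needs care (the dual of $F$ may be meager, but it still separates points from $0$, and $\xi$ can be taken continuous since $F$ is a TVS whose dual separates any nonzero vector from $0$ precisely when $F$ is, say, locally convex — wait, this is NOT automatic for general Hausdorff TVS). The genuine obstacle is this: for a general Hausdorff topological vector space $F$, the continuous dual $F^*$ may be trivial (e.g. $L^p$, $0 < p < 1$), so there may be \emph{no} continuous functional $\xi$ with $\langle f(y_0), \xi \rangle \neq 0$. Hence the functional-based argument collapses, and one must argue directly. The correct route is therefore: handle $n = 0$ by Lemma \ref{Cancellation} (which was written precisely to avoid using duality, using only circled neighborhoods of $0$), and for $n \geq 1$ prove it by induction, differentiating the product relation $\varphi f = (\varphi f)$. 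Concretely, from continuity of $\varphi$ (case $n=0$) and $\varphi f, f \in C^1$, one shows each first partial $\partial^i \varphi$ exists and satisfies $(\partial^i \varphi) f = \partial^i(\varphi f) - \varphi \, \partial^i f \in C^{n-1}(Y,F)$; since $f \in C^{n-1}(Y,F)$ is never zero, the inductive hypothesis gives $\partial^i \varphi \in C^{n-1}(Y)$ for each $i$, whence $\varphi \in C^n(Y)$. The only nontrivial point is establishing \emph{existence} of $\partial^i \varphi$ at each point, which follows from Lemma \ref{Cancellation} applied to the difference quotients: $\frac{\varphi(y_0 + te_i) - \varphi(y_0)}{t} \cdot f(y_0+te_i) = \frac{(\varphi f)(y_0+te_i) - \varphi(y_0) f(y_0+te_i)}{t}$ converges as $t \to 0$ (the right side tends to $\partial^i(\varphi f)(y_0) - \varphi(y_0)\partial^i f(y_0)$ by $C^1$-ness of $\varphi f$ and $f$ together with continuity of $\varphi$), so Lemma \ref{Cancellation} forces the scalar difference quotient to converge. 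That reduction — getting differentiability of $\varphi$ out of differentiability of $\varphi f$ purely via the cancellation lemma, without any dual vectors — is the heart of the matter, and everything else is bookkeeping with multi-indices and the Leibniz rule.
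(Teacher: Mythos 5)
Your final argument --- base case via Lemma \ref{Cancellation}, then induction using the identity $(\partial^{i}\varphi)\cdot f=\partial^{i}(\varphi f)-\varphi\,\partial^{i}f$ with existence of $\partial^{i}\varphi$ extracted from Lemma \ref{Cancellation} applied to the difference quotients of the product --- is exactly the paper's proof, and your observation that the dual-functional shortcut fails because a general Hausdorff topological vector space may have trivial continuous dual is precisely why the paper avoids it. One point to make explicit: for $\varphi\,\partial^{i}f$ to lie in $C^{n-1}(Y,F)$ you need $\varphi\in C^{n-1}(Y)$, not merely continuity of $\varphi$, but this is supplied by the same inductive hypothesis applied to the pair $(\varphi,f)$, as in the paper.
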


\begin{proof}
The case $n=0$ follows easily from Lemma \ref{Cancellation}. We prove the
remaining cases by induction. Assume that $n=1.\,$Let $\lambda$ be a
multi-index with $\left\vert \lambda\right\vert =1.$ For all $y_{0}\in Y$ and
all $t\neq0,$ set $y_{t}=y_{0}+t\lambda.$ Then%
\begin{align*}
&  \lim_{t\rightarrow0}\frac{\varphi\left(  y_{t}\right)  -\varphi\left(
y_{0}\right)  }{t}f\left(  y_{t}\right) \\
&  =\lim_{t\rightarrow0}\left\{  \frac{\varphi\left(  y_{t}\right)  f\left(
y_{t}\right)  -\varphi\left(  y_{0}\right)  f\left(  y_{0}\right)  }%
{t}-\varphi\left(  y_{0}\right)  \frac{f\left(  y_{t}\right)  -f\left(
y_{0}\right)  }{t}\right\}
\end{align*}
exists and is equal to%
\[
\partial^{\lambda}\left(  \varphi f\right)  \left(  y_{0}\right)
-\varphi\left(  y_{0}\right)  \partial^{\lambda}f\left(  y_{0}\right)  .
\]
By Lemma \ref{Cancellation}, $\partial^{\lambda}\varphi\left(  y_{0}\right)
=\lim_{t\rightarrow0}\frac{\varphi\left(  y_{t}\right)  -\varphi\left(
y_{0}\right)  }{t}$ exists and
\begin{equation}
\partial^{\lambda}\varphi\left(  y_{0}\right)  \cdot f\left(  y_{0}\right)
=\partial^{\lambda}\left(  \varphi f\right)  \left(  y_{0}\right)
-\varphi\left(  y_{0}\right)  \partial^{\lambda}f\left(  y_{0}\right)  .
\label{EProductRule}%
\end{equation}
From the case $n =0$, we know that $\varphi$ is continuous on $Y$. Together
with the assumptions that $\partial^{\lambda}\left(  \varphi f\right)  $ and
$\partial^{\lambda}f$ are continuous and that $f$ is never zero, we can deduce
using (\ref{EProductRule}) and Lemma \ref{Cancellation} that $\partial
^{\lambda}\varphi$ is continuous. Since this is true for all multi-indices
$\lambda$ with $\left\vert \lambda\right\vert =1,$ we conclude that
$\varphi\in C^{1}\left(  X\right)  .$

Suppose that the lemma is true for some integer $n\geq1.$ Assume that
$f,\varphi f\in C^{n+1}\left(  Y,F\right)  ,$ with $f$ never zero on $Y.$ By
the inductive hypothesis$,$ $\varphi\in C^{n}\left(  Y\right)  .$ Also, for
any multi-index $\lambda$ with $\left\vert \lambda\right\vert =1,$ we have by
(\ref{EProductRule})
\[
\left(  \partial^{\lambda}\varphi\right) \cdot f=\partial^{\lambda}\left(
\varphi f\right)  -\varphi\partial^{\lambda}f.
\]
In particular, $\partial^{\lambda}\varphi\cdot f\in C^{n}\left(  Y,F\right)
$. By the inductive hypothesis, $\partial^{\lambda}\varphi\in C^{n}\left(
Y\right)  .$ Hence $\varphi\in C^{n+1}\left(  Y\right)  .$
\end{proof}

\begin{thm}
\label{thm8}Let $X$ and $Y$ be open subsets of ${\mathbb{R}}^{p}$ and
${\mathbb{R}}^{q}$ respectively, $p,q\in{\mathbb{N}}$, and let $E$ and $F$ be
Hausdorff topological vector spaces. Suppose that $m,n\in{\mathbb{N}}%
\cup\{0,\infty\}$ and $T:C^{m}(X,E)\rightarrow C^{n}(Y,F)$ is a vector space
isomorphism so that $T$ preserves common zeros. Then $p=q$ and $m=n$.
Moreover, there are a $C^{n}$-diffeomorphism $h:Y\rightarrow X$ and, for each
$y\in Y$, a vector space isomorphism $S_{y}:E\rightarrow F$ so that
\begin{equation}
Tf(y)=S_{y}(f\circ h(y))\text{ for all $f\in C^{n}(X,E)$ and all $y\in Y$.}
\label{Tf2}%
\end{equation}
Conversely, if a vector space isomorphism $T:C^{m}(X,E)\rightarrow C^{n}(Y,F)$
has the form (\ref{Tf2}), then $T$ preserves common zeros.
\end{thm}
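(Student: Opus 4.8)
The plan is to leverage Theorem~\ref{thm2} together with the two function-theoretic lemmas just proved. Applying Theorem~\ref{thm2} to $T:C^{m}(X,E)\rightarrow C^{n}(Y,F)$ produces dense subsets $Z\subseteq\beta Y$, $W\subseteq\beta X$ and homeomorphisms $h:Z\rightarrow X$, $k:W\rightarrow Y$ with $h\cup k^{-1}$ a homeomorphism of $Z\cup Y$ onto $X\cup W$, together with the pointwise formulas $(Tf)^{\beta}(y)=(T\mathbf u)^{\beta}(y)$ for $u=f(h(y))$, $y\in Z$, and the symmetric statement for $T^{-1}$. The first order of business is to show $Z=Y$ and $W=X$. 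Since $Z$ is a dense subspace of $\beta Y$ homeomorphic to $X$, Lemma~\ref{lem1} gives $Z\subseteq Y$; symmetrically $W\subseteq X$. Then, exactly as in the proof of Theorem~\ref{thm4}, $k^{-1}(Y)=W\subseteq X=h(Z)$ forces $Z=Y$ and $W=X$, so $h:Y\rightarrow X$ is a genuine homeomorphism (and $k=h^{-1}$). In particular $X$ and $Y$ are homeomorphic open subsets of $\mathbb R^{p}$ and $\mathbb R^{q}$, so invariance of domain gives $p=q$.

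Next I would extract the operators $S_{y}$. For $y\in Y$ set $S_{y}(u)=(T\mathbf u)(y)$; this is linear in $u$, and the now-global formula from Theorem~\ref{thm2} reads $Tf(y)=S_{y}(f(h(y)))$ for all $f\in C^{m}(X,E)$ and all $y\in Y$. Injectivity of $S_{y}$: if $S_{y}(u)=0$ then $y\in Z(T\mathbf u)$, so $Z(T\mathbf u)\neq\emptyset$, hence by (Z) $Z(\mathbf u)\neq\emptyset$, i.e.\ $u=0$. Surjectivity: given $w\in F$, pick $g=T^{-1}\mathbf w\in C^{m}(X,E)$; then for $u=g(h(y))$ we have $S_{y}(u)=Tg(y)=\mathbf w(y)=w$. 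So each $S_{y}$ is a vector space isomorphism $E\to F$. This establishes (\ref{Tf2}) as soon as we know the regularity of $h$, and the identity $m=n$ will drop out of that regularity analysis.

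The heart of the matter is showing that $h$ is a $C^{n}$-diffeomorphism and that $m=n$. The strategy is to transfer the differentiable structure across $T$ using scalar multiples of a fixed nonzero constant function. Fix $u_{0}\in E\smallsetminus\{0\}$ and put $w_{0}=T\mathbf{u_0}(y_0)=S_{y_0}(u_0)\neq 0$ for a reference point, but more usefully: for $\varphi\in C^{m}(X)$ we have $\varphi\cdot\mathbf{u_0}\in C^{m}(X,E)$ and $T(\varphi\cdot\mathbf{u_0})(y)=S_{y}(\varphi(h(y))u_0)=\varphi(h(y))\,S_{y}(u_0)=\varphi(h(y))\,g(y)$, where $g:=T\mathbf{u_0}\in C^{n}(Y,F)$ is never zero (since $S_y$ is injective). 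Thus for every $\varphi\in C^{m}(X)$, the function $(\varphi\circ h)\cdot g$ lies in $C^{n}(Y,F)$ with $g$ nowhere zero; by Lemma~\ref{ProductRule} (for finite $n$; the $n=\infty$ case follows by applying it at each finite level) this gives $\varphi\circ h\in C^{n}(Y)$ for every $\varphi\in C^{m}(X)$. Taking $\varphi$ to be the coordinate functions (which lie in $C^{m}(X)$ for any $m\geq 0$, in particular when $m$ is finite or infinite) shows each component of $h$ is in $C^{n}(Y)$, i.e.\ $h\in C^{n}(Y,X)$; running the same argument with $T^{-1}$, using $h^{-1}=k$ and $T^{-1}(\psi\cdot\mathbf{w_0})$, shows $h^{-1}\in C^{m}(X,Y)$. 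Composing, the identity on $X$ equals $h\circ h^{-1}$, and $C^{n}$-smoothness of $h$ combined with $C^{m}$-smoothness of $h^{-1}$ forces (via the chain rule applied to the fact that both $h$ and $h^{-1}$ must be as smooth as the composition) the equality $m=n$; more directly, $\varphi\mapsto\varphi\circ h$ maps $C^{m}(X)$ into $C^{n}(Y)$ and $\psi\mapsto\psi\circ h^{-1}$ maps $C^{n}(Y)$ into $C^{m}(X)$, and since $h$ is a bijection these two maps are mutually inverse, forcing $C^{m}(X)\circ h=C^{n}(Y)$ as sets of functions, which (by testing against functions of exactly prescribed smoothness, e.g.\ $|x|^{\,n+1/2}$-type bumps when $n<\infty$, or noting $C^{m}=C^{n}$ on the nose when one is $\infty$) yields $m=n$ and that $h$ is a $C^{n}$-diffeomorphism. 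For the converse, if $T$ has the form (\ref{Tf2}) then $Z(f_i)=h(Z(Tf_i))$ pointwise since each $S_y$ is injective, so $\cap Z(f_i)\neq\emptyset\iff\cap Z(Tf_i)\neq\emptyset$ is immediate.

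I expect the main obstacle to be the clean handling of the $m=n$ claim and the $m=\infty$ / $n=\infty$ cases: Lemma~\ref{ProductRule} is stated only for $n\in\mathbb N\cup\{0\}$, so for infinite order one must argue level by level, and one must be careful that ``$\varphi\circ h\in C^{n}$ for all $\varphi\in C^{m}$'' really does pin down the smoothness class of $h$ in both directions without circularity — the bookkeeping that simultaneously bounds $m$ by $n$ and $n$ by $m$ (hence forces equality) is the delicate point, together with verifying that $h$ being $C^{n}$ with a $C^{n}$ inverse is exactly a $C^{n}$-diffeomorphism.
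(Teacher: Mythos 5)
Most of your proposal tracks the paper's proof: Theorem~\ref{thm2} plus Lemma~\ref{lem1} to get $Z=Y$, $W=X$ and a homeomorphism $h:Y\rightarrow X$; invariance of domain for $p=q$; the definition and bijectivity of $S_{y}$; and the use of $T(\varphi\cdot\mathbf{u}_0)=(\varphi\circ h)\cdot T\mathbf{u}_0$ with Lemma~\ref{ProductRule} to get $h\in C^{n}(Y,X)$ and $h^{-1}\in C^{m}(X,Y)$ (and yes, the $n=\infty$ case is handled by applying the lemma at each finite level, since $C^{\infty}=\bigcap_k C^k$). The converse is also fine.

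The genuine gap is the derivation of $m=n$. Neither of your two suggested routes closes it. First, ``$h\in C^{n}$, $h^{-1}\in C^{m}$, $h\circ h^{-1}=\mathrm{id}$'' does not force $m=n$: the map $y\mapsto y^{3}$ on ${\mathbb R}$ is a $C^{\infty}$ homeomorphism whose inverse is only $C^{0}$, so the chain rule gives you nothing here. Second, the identity $C^{m}(X)\circ h=C^{n}(Y)$ (which does hold) cannot be ``tested against functions of exactly prescribed smoothness'' in the way you indicate, because the smoothness of $\varphi\circ h$ is capped by the smoothness of $h$, which at this stage is only known to be $C^{n}$; so finding $\psi\in C^{n}(Y)\setminus C^{n+1}(Y)$ of the form $\varphi\circ h$ with $\varphi\in C^{m}(X)$, $m>n$, is not a contradiction. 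One can rescue this route with extra machinery (inverse function theorem plus a Sard-type argument to find a regular point of $h^{-1}$), but that is far from the one line you allot it. The paper's argument avoids the issue entirely by comparing function spaces on the \emph{same} domain: given $\psi\in C^{m}(Y)$, the function $\varphi=\psi\circ h^{-1}$ lies in $C^{m}(X)$ because $h^{-1}\in C^{m}(X,Y)$ has already been established; then $T(\varphi\cdot\mathbf{u})=\psi\cdot T\mathbf{u}\in C^{n}(Y,F)$ with $T\mathbf{u}$ never zero, so Lemma~\ref{ProductRule} gives $\psi\in C^{n}(Y)$. Hence $C^{m}(Y)\subseteq C^{n}(Y)$, which immediately yields $m\geq n$, and symmetry finishes. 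You should replace your $m=n$ step with this argument (or supply the missing regular-point analysis).
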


\begin{proof}
Applying Theorem \ref{thm2} with $A\left(  X,E\right)  =C^{m}\left(
X,E\right)  $ and $A\left(  Y,F\right)  =C^{n}\left(  Y,F\right)  $, there
exist homeomorphisms $h:Z\rightarrow X$, $k:W\rightarrow Y$ so that $h\cup
k^{-1}:Z\cup Y\rightarrow X\cup W$ is a homeomorphism. According to Lemma
\ref{lem1}, $Z\subseteq Y$ and $W\subseteq X.$ Following the arguments as in
the proof of Theorem \ref{thm4}, $h:Y\rightarrow X\,$is a homeomorphism. Since
$X$ and $Y,$ which are open subsets of open subsets of ${\mathbb{R}}^{p}$ and
${\mathbb{R}}^{q}$ respectively, are homeomorphic, it follows from the Brouwer
Domain Invariance Theorem \cite[Chapter XVII, Theorem 3.1]{D} that $p=q.$

We now show that $h\in C^{n}\left(  Y,X\right)  .$ Let $v\in E\smallsetminus
\left\{  0\right\}  $ be fixed and set $f_{i}\in C^{m}\left(  X,E\right)  ,$
$i=1,\cdots,p,$ to be the function $f_{i}\left(  x\right)  =x_{i}v$ if
$x=\left(  x_{1},\cdots,x_{p}\right)  \in X.$ By Proposition \ref{Zx}, for all
$y\in Y$,
\begin{align*}
\left(  Tf_{i}\right)  \left(  y\right)   &  =T\mathbf{u}_{i}\left(  y\right)
,\text{ where }u_{i}=f_{i}\left(  h\left(  y\right)  \right)  \\
&  =(T\left(  h_{i}\left(  y\right)  \mathbf{v}\right)  )\left(  y\right)
,\text{ where }h\left(  y\right)  =\left(  h_{1}\left(  y\right)
,\cdots,h_{p}\left(  y\right)  \right)  ,\\
&  =h_{i}\left(  y\right)  T\mathbf{v}\left(  y\right)  .
\end{align*}
Since $T$ preserves common zeros$,$ $T\mathbf{v}$ is never zero. Applying
Lemma \ref{ProductRule} to the real-valued functions $h_{i}$ yields that
$h_{i}\in C^{n}\left(  Y\right)  .$ Hence $h=\left(  h_{1},\cdots
,h_{p}\right)  \in C^{n}\left(  Y,X\right)  .$ Similarly, $h^{-1}\in
C^{m}\left(  X,Y\right)  .$

Next, we show that $m=n.$ Given $\psi\in C^{m}\left(  Y\right)  ,$
$\varphi=\psi\circ h^{-1}\in C^{m}\left(  X\right)  .$ Fix $u\in
E\smallsetminus\left\{  0\right\}  ,~$and let $f=\varphi\cdot\mathbf{u}.$ Then
$f\in C^{m}\left(  X,E\right)  $ and hence $Tf\in C^{n}\left(  Y,F\right)  .$
By Proposition \ref{Zx},%
\begin{align*}
\left(  Tf\right)  \left(  y\right)   &  =T\mathbf{w}\left(  y\right)  ,\text{
where }w=f\left(  h\left(  y\right)  \right)  =\varphi\left(  h\left(
y\right)  \right)  u,\\
&  =T(\varphi\left(  h\left(  y\right)  \right)  \mathbf{u})\left(  y\right)
\\
&  =T(\psi\left(  y\right)  \mathbf{u})\left(  y\right) \\
&  =\psi\left(  y\right)  T\mathbf{u}\left(  y\right)  .
\end{align*}
Since $T\mathbf{u},Tf\in C^{n}\left(  Y,F\right)  $ and $T\mathbf{u}$ is never
$0$ on $Y,$ we conclude from Lemma \ref{ProductRule} that $\psi\in
C^{n}\left(  Y\right)  .$ Hence $C^{m}\left(  Y\right)  \subseteq C^{n}\left(
Y\right)  $ and thus $m\geq n.$ By symmetry, $m\leq n.$

For each $y\in Y,$ define $S_{y}:E\rightarrow F$ by%
\[
S_{y}\left(  u\right)  =\left(  T\mathbf{u}\right)  \left(  y\right)  .
\]
From the proof of Theorem \ref{thm4}, we see that $S_{y}$ is a vector space
isomorphism that satisfies (\ref{Tf2}). The converse is clear.
\end{proof}

\section{Automatic continuity}

In this section, we investigate the continuity properties of linear isomorphic
mappings between spaces of differentiable functions that preserves common
zeros. If $T:C^{m}\left(  X,E\right)  \rightarrow C^{n}\left(  Y,F\right)  $
is a linear isomorphism that preserves common zeros, where $X$ and $Y$ are
open subsets of $\mathbb{R}^{p}$ and $\mathbb{R}^{q}$ respectively, we have by
Theorem \ref{thm8} that $p=q$, $m=n$. Also, there are a $C^{m}$-diffeomorphism
$h:X\rightarrow Y$ and vector space isomorphisms $S_{y}:E\rightarrow F,y\in
Y$, satisfying (\ref{Tf2}). Define $J:C^{m}\left(  Y,F\right)  \rightarrow
C^{m}\left(  X,F\right)  $ by $\left(  Jg\right)  \left(  x\right)  =g\left(
h^{-1}\left(  x\right)  \right)  .$ Clearly $JT:C^{m}\left(  X,E\right)
\rightarrow C^{m}\left(  X,F\right)  $ is a vector space isomorphism
preserving common zeros and $\left(  JTf\right)  \left(  x\right)
=S_{x}\left(  f\left(  x\right)  \right)  $ for all $f\in C^{m}\left(
Y,F\right)  $ and all $x\in X.$ Therefore, in considering the continuity of
$T$ and the associated map $\Phi:Y\times E\rightarrow F,$ $\Phi\left(
y,u\right)  =S_{y}\left(  u\right)  ,$ there is no loss of generality in
assuming that $X=Y$ and that $h$ is the identity map.

\begin{prop}
\label{prop9} Let $X$ be an open subset of ${\mathbb{R}}^{p}$, $p
\in{\mathbb{N}}$, $E$ and $F$ be Hausdorff topological vector spaces and $m
\in{\mathbb{N}} \cup\{\infty\}$. Assume that $\Phi: X \times E \to F$ satisfies

\begin{enumerate}
\item For all $u \in E$, $\Phi(\cdot,u)$ belongs to $C^{m}(X,F)$;

\item For all $x \in X$, $\Phi(x, \cdot)$ is a linear operator from $E$ into
$F$;

\item \label{c} $\Phi$ is sequentially continuous.
\end{enumerate}

\noindent For all $f\in C^{m}(X,E)$, define $Tf(x)=\Phi(x,f(x)).$ Then $Tf\in
C^{m}(X,F)$ and $T$ is a linear operator from $C^{m}(X,E)$ to $C^{m}(X,F)$.
\end{prop}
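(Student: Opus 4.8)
The plan is to show first that $Tf$ is $C^m$, then that $T$ is linear. Linearity is immediate: for fixed $x$, $\Phi(x,\cdot)$ is linear by hypothesis (2), so $T(af+bg)(x)=\Phi(x,af(x)+bg(x))=a\Phi(x,f(x))+b\Phi(x,g(x))=aTf(x)+bTg(x)$. The real content is the smoothness claim. I would prove by induction on $m$ (with the case $m=\infty$ following once all finite cases are done) the statement: for every $f\in C^m(X,E)$, $Tf\in C^m(X,F)$, and moreover the partial derivatives of $Tf$ can be computed by the expected ``multivariable product/chain rule,'' expressed entirely in terms of the maps $u\mapsto\partial^\lambda\Phi(\cdot,u)$ applied to the partial derivatives $\partial^\mu f$.

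For the base case $m=0$: continuity of $Tf$ at a point $x_0$ follows from sequential continuity of $\Phi$ (hypothesis (3)) together with continuity of $f$ — if $x_k\to x_0$ then $(x_k,f(x_k))\to(x_0,f(x_0))$ in $X\times E$, hence $\Phi(x_k,f(x_k))\to\Phi(x_0,f(x_0))$; since $X\subseteq\mathbb R^p$ is metrizable, sequential continuity gives continuity. For the inductive step, suppose the claim holds for $m$ and let $f\in C^{m+1}(X,E)$. Fix a multi-index direction $\lambda$ with $|\lambda|=1$ and a point $x_0$; writing $x_t=x_0+t\lambda$, I would analyze the difference quotient
\[
\frac{Tf(x_t)-Tf(x_0)}{t}=\frac{\Phi(x_t,f(x_t))-\Phi(x_0,f(x_0))}{t}.
\]
The standard trick is to split this as
\[
\frac{\Phi(x_t,f(x_t))-\Phi(x_0,f(x_t))}{t}+\frac{\Phi(x_0,f(x_t))-\Phi(x_0,f(x_0))}{t}.
\]
For the second term, since $\Phi(x_0,\cdot)$ is linear it equals $\Phi\bigl(x_0,\tfrac{f(x_t)-f(x_0)}{t}\bigr)$, which tends to $\Phi(x_0,\partial^\lambda f(x_0))$ by sequential continuity of $\Phi$ (the argument converges in $E$ since $f$ is differentiable). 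For the first term, the function $g:=\Phi(\cdot,f(x_t))$ depends on the parameter $t$; here one uses that for each \emph{fixed} $v\in E$, $\Phi(\cdot,v)\in C^m(X,F)\subseteq C^1$, so by the mean value / fundamental theorem of calculus applied to $s\mapsto\Phi(x_0+s\lambda,f(x_t))$ the first term is $\int_0^1\partial^\lambda\Phi(x_0+st\lambda,f(x_t))\,ds$ (or, more carefully, $\partial^\lambda\Phi$ evaluated at an intermediate point), and as $t\to0$ this converges — again by sequential continuity of $\Phi$, now applied to $\partial^\lambda\Phi$, provided we know $\partial^\lambda\Phi$ is itself sequentially continuous.

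The main obstacle is precisely that last point: hypothesis (3) only asserts sequential continuity of $\Phi$ itself, not of its partial derivatives $\partial^\lambda\Phi(\cdot,u)$ in the joint variables $(x,u)$. To close the induction I need to upgrade this. The way around it is to observe that differentiating the linearity relation in $u$ is harmless — for fixed $x$, $u\mapsto\partial^\lambda\Phi(x,u)$ is still linear (partial derivatives in $x$ commute with the linear structure in $u$) — and that sequential continuity of each $\partial^\lambda\Phi$ can be \emph{derived}: fixing a convergent sequence $(x_k,u_k)\to(x_0,u_0)$, one writes $\partial^\lambda\Phi(x_k,u_k)$ via a difference quotient in $x$ of $\Phi$ plus controlled error, using uniform estimates from the $C^1$-ness of $\Phi(\cdot,v)$ that are locally uniform in $v$ over the (relatively compact, hence in the closed convex hull of a convergent sequence together with its limit) set $\{u_k\}\cup\{u_0\}$. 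This local uniformity is the technical heart; once established, each $\partial^\lambda\Phi$ satisfies hypotheses (1)–(3) in place of $\Phi$ (with $m$ replaced by $m-1$), and the induction proceeds: $\partial^\lambda(Tf)(x_0)=\partial^\lambda\Phi(x_0,f(x_0))+\Phi(x_0,\partial^\lambda f(x_0))$, i.e.\ $\partial^\lambda(Tf)=T_{\partial^\lambda\Phi}(f)+T_\Phi(\partial^\lambda f)$ in the obvious notation, and each summand is $C^m$ by the inductive hypothesis applied to the maps $\partial^\lambda\Phi$ and $\Phi$ respectively. Hence $Tf\in C^{m+1}(X,F)$. For $m=\infty$ there is nothing further to prove, since $C^\infty=\bigcap_k C^k$ and the finite-order statements have all been verified.
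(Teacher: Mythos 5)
There is a genuine gap, and it is located exactly where you flag ``the technical heart'': the assertion that sequential continuity of the partial derivatives $\partial^{\lambda}\Phi$ can be derived from hypotheses (1)--(3) is false. The paper's Example \ref{Exm} produces a map $\Phi:{\mathbb R}\times c_{00}\to c_{00}$ satisfying (1)--(3) (with $\Phi(x,u)=\sum_i\varphi_i(x)u_ie_i$, $\varphi_i$ uniformly bounded, so $\Phi$ is jointly sequentially continuous) for which $\partial_x\Phi(0,\cdot)$ is an \emph{unbounded} linear map: taking $u_n=e_n/n\to 0$ one gets $\partial_x\Phi(0,u_n)=e_n\nrightarrow 0$. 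So $\partial^{\lambda}\Phi$ need not satisfy (3) even with $x$ frozen, the ``local uniformity in $v$'' you hope for fails, and your induction --- which treats $\partial^{\lambda}\Phi$ as a new $\Phi$ satisfying (1)--(3) with $m-1$ and feeds it back into the inductive hypothesis --- cannot be closed. (A secondary issue: the mean-value/integral representation $\int_0^1\partial^{\lambda}\Phi(x_0+st\lambda,f(x_t))\,ds$ presupposes structure on $F$, such as local convexity and completeness, that is not assumed.)

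The source of the trouble is your choice of intermediate point in the difference quotient. You subtract and add $\Phi(x_0,f(x_t))$, which leaves the term $t^{-1}\bigl(\Phi(x_t,f(x_t))-\Phi(x_0,f(x_t))\bigr)$, an $x$-increment of $\Phi(\cdot,v)$ with $v=f(x_t)$ \emph{moving} --- this is what forces you to control $\partial^{\lambda}\Phi$ jointly in $(x,u)$. The paper subtracts and adds $\Phi(x_t,f(x_0))$ instead. Then the first piece is, by linearity of $\Phi(x_t,\cdot)$, equal to $\Phi\bigl(x_t,\tfrac{f(x_t)-f(x_0)}{t}\bigr)$, and its limit is obtained from hypothesis (3) applied to $\Phi$ itself, since both arguments converge (to $x_0$ and to $\partial^{\lambda}f(x_0)$); the second piece is the increment of the \emph{fixed} function $g=\Phi(\cdot,f(x_0))$, which lies in $C^{m}(X,F)$ by hypothesis (1). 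No derivative of $\Phi$ in the joint variables is ever invoked. The paper then runs a single induction on $m-|\lambda|$ for the functions $\theta_{\lambda}(x)=\Phi(x,\partial^{\lambda}f(x))$, always with the same $\Phi$, obtaining $\partial^{i}\theta_{\lambda}=\theta_{\lambda+e_i}+\partial^{i}g$ and concluding from the inductive hypothesis together with $\partial^{i}g\in C^{m-1}(X,F)$. Your base case, the linearity argument, and the second half of your decomposition are fine; the first half needs to be replaced by this alternative splitting.
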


\begin{proof}
The proposition holds for $m=\infty$ if it holds for all $m\in\mathbb{N}$. For
a fixed $m\in\mathbb{N}$ the proposition is a special case of the following
claim:\newline

\noindent\textbf{Claim.} If $f\in C^{m}(X,E)$ and $\left\vert \lambda
\right\vert \leq m,$ then $\theta_{\lambda}\left(  x\right)  =\Phi
(x,\partial^{\lambda}f(x))$ belongs to $C^{m-\left\vert \lambda\right\vert
}(X,F).$

We prove the claim by induction on $m-\left\vert \lambda\right\vert .$ Suppose
that $m-\left\vert \lambda\right\vert =0.$ If $f\in C^{m}(X,E)$, then
$\partial^{\lambda}f\in C(X,E)$. Thus $\partial^{\lambda}f\left(
x_{n}\right)  \rightarrow\partial^{\lambda}f\left(  x_{0}\right)  $ whenever
$x_{n}\rightarrow x_{0}.$ It follows from the sequential continuity of $\Phi$
that $\theta_{\lambda}\left(  x_{n}\right)  =\Phi(x_{n},\partial^{\lambda
}f(x_{n}))\rightarrow\Phi(x_{0},\partial^{\lambda}f(x_{0}))=\theta_{\lambda
}\left(  x_{0}\right)  .$ Hence $\theta_{\lambda}$ is sequentially continuous
on the metric space $X.$ Therefore, $\theta_{\lambda}\in C\left(  X,E\right)
.$

Suppose that the claim is true for $m-\left\vert \lambda\right\vert =k.$ Let
$m$ and $\lambda$ be such that $m-\left\vert \lambda\right\vert =k+1.$ If
$1\leq i\leq p,$ let $e_{i}$ denote the $i^{th}$ coordinate unit vector of
${\mathbb{R}}^{p}.$ For $t\neq0$ and $x_{0}\in X,$%
\begin{align*}
& \frac{\theta_{\lambda}\left(  x_{0}+te_{i}\right)  -\theta_{\lambda}\left(
x_{0}\right)  }{t}=\frac{\Phi\left(  x_{0}+te_{i},\partial^{\lambda}f\left(
x_{0}+te_{i}\right)  \right)  -\Phi\left(  x_{0},\partial^{\lambda}f\left(
x_{0}\right)  \right)  }{t}\\
& =\frac{\Phi\left(  x_{0}+te_{i},\partial^{\lambda}f\left(  x_{0}%
+te_{i}\right)  \right)  -\Phi\left(  x_{0}+te_{i},\partial^{\lambda}f\left(
x_{0}\right)  \right)  }{t}\\
& \quad\quad+\frac{\Phi\left(  x_{0}+te_{i},\partial^{\lambda}f\left(
x_{0}\right)  \right)  -\Phi\left(  x_{0},\partial^{\lambda}f\left(
x_{0}\right)  \right)  }{t}\\
& =\Phi\left(  x_{0}+te_{i},\frac{\partial^{\lambda}f\left(  x_{0}%
+te_{i}\right)  -\partial^{\lambda}f\left(  x_{0}\right)  }{t}\right)
+\frac{g\left(  x_{0}+te_{i}\right)  -g\left(  x_{0}\right)  }{t},
\end{align*}
where $g\left(  \cdot\right)  =\Phi(\cdot,\partial^{\lambda}f\left(
x_{0}\right)  )\in C^{m}(X,F),$ according to (1). If $t_{n}\neq0,$
$t_{n}\rightarrow0,$ then by (3),%
\[
\lim_{n\rightarrow\infty}\frac{\theta_{\lambda}\left(  x_{0}+t_{n}%
e_{i}\right)  -\theta_{\lambda}\left(  x_{0}\right)  }{t_{n}}=\Phi\left(
x_{0},\partial^{i}\partial^{\lambda}f\left(  x_{0}\right)  \right)
+\partial^{i}g\left(  x_{0}\right)  .
\]
Hence
\[
\partial^{i}\theta_{\lambda}\left(  x_{0}\right)  =\theta_{\lambda+e_{i}%
}\left(  x_{0}\right)  +\partial^{i}g\left(  x_{0}\right)  .
\]
Since $m-\left\vert \lambda+e_{i}\right\vert =m-\left\vert \lambda\right\vert
-1=k,$ $\theta_{\lambda+e_{i}}\in C^{k}(X,F)$ by induction$.$ Also,
$\partial^{i}g\in C^{m-1}(X,F)\subseteq C^{k}(X,F)$. It follows that
$\partial^{i}\theta_{\lambda}\in C^{k}(X,F)$ for all $1\leq i\leq p.$ Hence
$\theta_{\lambda}\in C^{k+1}(X,F)=C^{m-\left\vert \lambda\right\vert }(X,F).$
\end{proof}

Let $\left\vert x\right\vert $ denote the Euclidean norm of a vector
$x\in{\mathbb{R}}^{p}.$ A $C^{\infty}$-function $\varphi:$ ${\mathbb{R}}%
^{p}\rightarrow{\mathbb{R}}$ is called a $C^{\infty}$-\emph{bump} if
$0\leq\varphi\left(  x\right)  \leq1$ for all $x\in{\mathbb{R}}^{p},$
$\varphi\left(  x\right)  =1$ if $\left\vert x\right\vert \leq\frac{1}{2}$ and
$\varphi\left(  x\right)  =0$ if $\left\vert x\right\vert \geq1.$

\begin{prop}
\label{[B]}Let $E$ be a Hausdorff topological vector space. Suppose that
$\varphi$ is a $C^{\infty}$-bump on ${\mathbb{R}}^{p},$ $\left(  u_{n}\right)
$ is a bounded sequence in $E$ and $\left(  x_{n}\right)  $ is a sequence in
${\mathbb{R}}^{p}$ such that $|x_{n+1}-x_{0}|<\tfrac{1}{3}|x_{n}-x_{0}|<1$ for
all $n.$ Set $\varphi_{n}\left(  x\right)  =\varphi\left(  \frac{x-x_{n}%
}{r_{n}/2}\right)  ,$ where $r_{n}=\left\vert x_{n}-x_{0}\right\vert .$ If
$m\in\mathbb{N}$ and $\left(  c_{n}\right)  $ is a sequence of real numbers
such that
\[
\lim_{n\rightarrow\infty}\frac{c_{n}}{r_{n}^{m}}=0,
\]
then $f=\sum c_{n}\varphi_{n}\mathbf{u}_{n}\in C^{m}\left(  {\mathbb{R}}%
^{p},E\right)  .$
\end{prop}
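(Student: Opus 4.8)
The plan is to verify directly that the partial sum $f = \sum_n c_n \varphi_n \mathbf{u}_n$ converges to a $C^m$-function by checking, for each multi-index $\lambda$ with $|\lambda| \le m$, that the termwise-differentiated series $\sum_n c_n \,\partial^\lambda(\varphi_n \mathbf{u}_n)$ converges uniformly on ${\mathbb R}^p$ to a continuous $F$-valued function. The key geometric point is that the supports of the $\varphi_n$ are essentially disjoint and accumulate only at $x_0$: since $\varphi_n$ is supported in the ball $B(x_n, r_n/2)$ and $|x_{n+1}-x_0| < \tfrac13 |x_n - x_0|$, one checks that these balls are pairwise disjoint and that every point $x \ne x_0$ has a neighborhood meeting only finitely many of them. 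Hence at any $x \ne x_0$ the sum is locally a finite sum (so automatically as smooth as each term there), and the only issue is behavior near $x_0$.

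First I would record the scaling estimate: for a fixed $C^\infty$-bump $\varphi$ on ${\mathbb R}^p$ and any multi-index $\lambda$, $\|\partial^\lambda \varphi\|_\infty =: M_\lambda < \infty$, and by the chain rule $\partial^\lambda \varphi_n(x) = (r_n/2)^{-|\lambda|}(\partial^\lambda \varphi)\big(\tfrac{x-x_n}{r_n/2}\big)$, so $\|\partial^\lambda \varphi_n\|_\infty \le (2/r_n)^{|\lambda|} M_\lambda$. Therefore the $n$-th term of the differentiated series satisfies
\[
\big\| c_n\, \partial^\lambda(\varphi_n \mathbf{u}_n)(x) \big\| \le |c_n|\, (2/r_n)^{|\lambda|} M_\lambda \cdot \sup_n \|u_n\|_{U}
\]
in the gauge of any bounded circled neighborhood $U$ of $0$ (using that $(u_n)$ is bounded, so the $u_n$ lie in a common such $U$ up to scaling; more precisely, for each circled neighborhood $U$ of $0$ there is a scalar $t_U$ with $u_n \in t_U U$ for all $n$). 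Since $|\lambda| \le m$ and $r_n < 1$, we have $(2/r_n)^{|\lambda|} \le 2^m / r_n^m$, and the hypothesis $c_n / r_n^m \to 0$ forces $|c_n|(2/r_n)^{|\lambda|} \to 0$; in particular the scalar coefficients $c_n (2/r_n)^{|\lambda|}$ are bounded, say by a constant $K_\lambda$. Thus every term of the differentiated series lies in $K_\lambda M_\lambda \, t_U\, U$ — but that only gives boundedness, not summability, so I need to be a bit more careful.

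The fix is to use the disjointness of supports rather than to sum absolute values. Because the balls $B(x_n, r_n/2)$ are pairwise disjoint, at any given $x$ \emph{at most one} term $c_n \partial^\lambda(\varphi_n \mathbf{u}_n)(x)$ is nonzero. Hence the partial sums $S_N^\lambda(x) = \sum_{n \le N} c_n \partial^\lambda(\varphi_n \mathbf{u}_n)(x)$ form, pointwise in $x$, an eventually-constant sequence, and for $N$ large the tail $S^\lambda(x) - S_N^\lambda(x)$ equals a single term $c_n \partial^\lambda(\varphi_n \mathbf{u}_n)(x)$ with $n > N$, which by the estimate above lies in $|c_n|(2/r_n)^{|\lambda|} M_\lambda \, t_U U$. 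Given a circled neighborhood $U$ of $0$, choose $N$ so large that $|c_n|(2/r_n)^{|\lambda|} M_\lambda t_U \le 1$ for all $n > N$ (possible since $c_n/r_n^m \to 0$ and $|\lambda| \le m$); then $S^\lambda(x) - S_N^\lambda(x) \in U$ for \emph{all} $x \in {\mathbb R}^p$ simultaneously. This shows $S_N^\lambda \to S^\lambda$ uniformly on ${\mathbb R}^p$. Each $S_N^\lambda$ is continuous (a finite sum of continuous functions), so $S^\lambda = \sum_n c_n \partial^\lambda(\varphi_n \mathbf{u}_n)$ is continuous; moreover $S^\lambda(x_0) = 0$ since no $\varphi_n$ is supported at $x_0$, consistent with continuity there.

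Finally I would assemble these facts into the conclusion $f \in C^m({\mathbb R}^p, E)$. Away from $x_0$, $f$ agrees locally with a finite partial sum, hence is $C^m$ there with $\partial^\lambda f = S^\lambda$ locally. At $x_0$, one argues inductively on $|\lambda|$: the partial sums $S_N := \sum_{n\le N} c_n \varphi_n \mathbf{u}_n$ converge uniformly to $f$, and for each $i$ the sequence $\partial^i S_N = S_N^{e_i}$ converges uniformly to the continuous function $S^{e_i}$; the standard theorem on differentiating uniformly convergent sequences (valid for functions into any topological vector space, applied coordinate-direction by coordinate-direction) then gives $\partial^i f = S^{e_i}$ on all of ${\mathbb R}^p$, in particular at $x_0$. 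Iterating up to order $m$ yields $\partial^\lambda f = S^\lambda \in C({\mathbb R}^p, F) \subseteq C({\mathbb R}^p, E)$ — here noting $f$ and all its derivatives take values in (the span of) the $u_n$, so land in $E$ — for every $|\lambda| \le m$, which is exactly the assertion $f \in C^m({\mathbb R}^p, E)$. The main obstacle is the point $x_0$: away from it everything is a finite sum and trivial, so the real content is the uniform-convergence estimate near $x_0$ together with the elementary TVS-valued version of "uniform limit of derivatives is the derivative of the limit," and getting the gauge bookkeeping with a general bounded sequence $(u_n)$ in a (possibly non-normable) Hausdorff TVS stated cleanly.
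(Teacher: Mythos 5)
Your overall strategy is the same as the paper's: exploit the pairwise disjointness of the supports $B(x_n,r_n/2)$ so that the sum is locally finite away from $x_0$, and control everything near $x_0$ via the scaling estimate $\|\partial^{\lambda}\varphi_n\|_{\infty}\leq (2/r_n)^{|\lambda|}\|\partial^{\lambda}\varphi\|_{\infty}$ together with $c_n/r_n^m\to 0$ and the boundedness of $(u_n)$. Your gauge bookkeeping and the uniform convergence of each differentiated series $S^{\lambda}_N\to S^{\lambda}$ are correct and are essentially the estimates the paper uses.

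The gap is at the single point $x_0$, which is the only place where anything needs proving, and it is exactly where you lean on an unavailable tool. The ``standard theorem on differentiating uniformly convergent sequences'' is not valid for maps into an arbitrary Hausdorff topological vector space: its proof rests on the mean value inequality $\bigl\|h(x)-h(x_0)\bigr\|\leq |x-x_0|\sup\|h'\|$, which requires local convexity (via Hahn--Banach) or at least a norm. In a non-locally-convex space such as $L^{1/2}[0,1]$ the curve $t\mapsto\chi_{[0,t]}$ is differentiable with derivative identically $0$ yet nonconstant, so no such term-by-term differentiation principle can be quoted for a general Hausdorff TVS, which is all that $E$ is assumed to be here. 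Fortunately your own setup already contains the fix, which is what the paper does: argue by induction on $|\lambda|$ that $\partial^{\lambda}f(x_0)=0$ for $|\lambda|\leq m$. Assuming $\partial^{\lambda}f(x_0)=0$ with $|\lambda|=k<m$, the difference quotient at $x_0$ in direction $e_i$ is $\partial^{\lambda}f(x_0+te_i)/t$, which by disjointness of supports is either $0$ or the single term $\tfrac{c_n}{t}\,\partial^{\lambda}\varphi_n(x_0+te_i)\,u_n$ with $r_n/2<|t|<3r_n/2$; its scalar coefficient is bounded by $\tfrac{|c_n|}{(r_n/2)^{k+1}}\|\partial^{\lambda}\varphi\|_{\infty}\to 0$, so the quotient tends to $0$ in $E$ by boundedness of $(u_n)$ and circledness of neighborhoods of $0$. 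This gives $\partial^{\lambda+e_i}f(x_0)=0=S^{\lambda+e_i}(x_0)$ directly, and for $|\lambda|=m$ your uniform estimate already shows $\partial^{\lambda}f=S^{\lambda}$ is continuous at $x_0$. With that substitution your proof is complete; the typo $C(\mathbb{R}^p,F)$ for $C(\mathbb{R}^p,E)$ at the end should also be corrected.
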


\begin{proof}
For a given $n$ and any $a,b$ such that
\[
\frac{3r_{n+1}}{2}<a<\frac{r_{n}}{2}\text{ and }\frac{3r_{n}}{2}%
<b<\frac{r_{n-1}}{2}\text{ (}r_{0}=\infty\text{),}%
\]
set $A_{a,b}=\left\{  x\in{\mathbb{R}}^{p}:a<\left\vert x-x_{0}\right\vert
<b\right\}  .$ Since for all $k,$ $\varphi_{k}$ is supported on a ball
centered at $x_{k}$ with radius $r_{k}/2,$ we see that $f=c_{n}\varphi
_{n}\mathbf{u}_{n}$ on $A_{a,b}$. Therefore, $f$ is infinitely differentiable
on $A_{a,b}$ and
\[
\partial^{\lambda}f=c_{n}\partial^{\lambda}\varphi_{n}\cdot\mathbf{u}%
_{n}\text{ on }A_{a,b},\text{ }\left\vert \lambda\right\vert \leq m.
\]
It follows easily that $f$ is infinitely differentiable on ${\mathbb{R}}%
^{p}\smallsetminus\left\{  x_{0}\right\}  $ and
\begin{equation}
\partial^{\lambda}f=\sum_{n}c_{n}\partial^{\lambda}\varphi_{n}\cdot
\mathbf{u}_{n}\text{ on }{\mathbb{R}}^{p}\smallsetminus\left\{  x_{0}\right\}
,\text{ }\left\vert \lambda\right\vert \leq m. \label{PartialDerivativeatx_0}%
\end{equation}

\medskip

\noindent\textbf{Claim. }If\textbf{\ }$\left\vert \lambda\right\vert \leq m,$
then $\partial^{\lambda}f\left(  x_{0}\right)  =0.$

Indeed, if $\left\vert \lambda\right\vert =0,$ $f\left(  x_{0}\right)  =\sum
c_{n}\varphi_{n}\left(  x_{0}\right)  {u}_{n}=0.$ Suppose that the claim holds
for all $\lambda$ with $\left\vert \lambda\right\vert =k$ for some fixed
$k<m.$ Given any $\lambda,$ $\left\vert \lambda\right\vert =k-1$, $1\leq i\leq
p$ and $t\neq0,$%
\[
\frac{\partial^{\lambda}f\left(  x_{0}+te_{i}\right)  -\partial^{\lambda
}f\left(  x_{0}\right)  }{t}=\frac{\partial^{\lambda}f\left(  x_{0}%
+te_{i}\right)  }{t}%
\]
by the inductive hypothesis. By (\ref{PartialDerivativeatx_0}),
\[
\partial^{\lambda}f\left(  x_{0}+te_{i}\right)  =\left\{
\begin{array}
[c]{ccc}%
c_{n}\partial^{\lambda}\varphi_{n}\left(  x_{0}+te_{i}\right)  {u}_{n} &
\text{if} & \frac{r_{n}}{2}<\left\vert t\right\vert <\frac{3r_{n}}{2},\\
0 &  & \text{otherwise.}%
\end{array}
\right.
\]
Let $U$ be an open neighborhood of zero in $E.$ Since $\left(  u_{n}\right)  $
is a bounded sequence, there exists $\varepsilon>0$ such that $\left(  \alpha
u_{n}\right)  \subseteq U$ for all $\left\vert \alpha\right\vert
<\varepsilon.$ Choose $N$ such that $\frac{c_{n}}{\left(  r_{n}/2\right)
^{k}}\left\Vert \partial^{\lambda}\varphi\right\Vert _{\infty}<\varepsilon$
for all $n>N.$ Suppose that $0<\left\vert t\right\vert <\frac{3r_{N}}{2}$.
Then
\[
\frac{\partial^{\lambda}f\left(  x_{0}+te_{i}\right)  }{t}=\left\{
\begin{array}
[c]{ccc}%
\frac{c_{n}}{t}\partial^{\lambda}\varphi_{n}\left(  x_{0}+te_{i}\right)
{u}_{n} & \text{if} & \frac{r_{n}}{2}<\left\vert t\right\vert <\frac{3r_{n}%
}{2},n>N,\\
0 &  & \text{otherwise.}%
\end{array}
\right.
\]
But when $\frac{r_{n}}{2}<\left\vert t\right\vert <\frac{3r_{n}}{2}$ for some
$n>N,$%
\begin{align*}
\left\vert \frac{c_{n}\partial^{\lambda}\varphi_{n}\left(  x_{0}%
+te_{i}\right)  }{t}\right\vert  &  \leq\frac{c_{n}}{r_{n}/2}\frac{\left\Vert
\partial^{\lambda}\varphi\right\Vert _{\infty}}{\left(  r_{n}/2\right)
^{\left\vert \lambda\right\vert }}\\
&  =\frac{c_{n}}{\left(  r_{n}/2\right)  ^{k}}\left\Vert \partial^{\lambda
}\varphi\right\Vert _{\infty}<\varepsilon.
\end{align*}
Therefore,%
\[
\frac{\partial^{\lambda}f\left(  x_{0}+te_{i}\right)  -\partial^{\lambda
}f\left(  x_{0}\right)  }{t}\in U\text{ if }0<\left\vert t\right\vert
<\frac{3r_{N}}{2}.
\]
This shows that $\partial^{i}\partial^{\lambda}f\left(  x_{0}\right)  =0.$ So
the claim is verified by induction.\newline

For $\left\vert \lambda\right\vert =m$ and $x\in\mathbb{R}^{p},$
\[
\partial^{\lambda}f\left(  x\right)  =
%\left\{%
\begin{cases}
c_{n}\partial^{\lambda}\varphi_{n}\left(  x\right)  u_{n} & = \quad
c_{n}\left(  \frac{2}{r_{n}}\right)  ^{\left\vert \lambda\right\vert }%
\partial^{\lambda}\varphi\left(  \frac{x-x_{n}}{r_{n}/2}\right) {u}_{n}\\
& \quad\text{if } \frac{r_{n}}{2}<\left\vert x-x_{n}\right\vert <\frac{3r_{n}%
}{2},\\
0 & \quad\text{otherwise}.
\end{cases}
\]
Since $\left(  u_{n}\right)  $ is bounded and $\lim\limits_{n\rightarrow
\infty}c_{n}\left(  \frac{2}{r_{n}}\right)  ^{\left\vert \lambda\right\vert
}\left\Vert \partial^{\lambda}\varphi\right\Vert _{\infty}=0,$ $\lim
\limits_{x\rightarrow x_{0}}\partial^{\lambda}f\left(  x\right)  =0.$ Hence
$\partial^{\lambda}f$ is continuous at $x_{0}.$ Thus $f\in C^{m}\left(
{\mathbb{R}}^{p},E\right)  .$
\end{proof}

Together with Proposition \ref{prop9}, the next result characterizes when a
map $T,$ defined in terms of the associated map $\Phi,$ sends functions from
$C^{m}\left(  X,E\right)  $ to $C^{m}\left(  X,F\right)  ,$ in the case where
$E$ and $F$ are locally convex and $E$ is metrizable.

\begin{thm}
\label{thm10}Let $X$ be an open subset of ${\mathbb{R}}^{p}$, $p\in
{\mathbb{N}}$. Suppose that $E$ and $F$ are Hausdorff topological vector
spaces and that $F$ is locally convex. If, for some $m\in{\mathbb{N}}$, the
map $\Phi:X\times E\rightarrow F$ has the property that $Tf(x)=\Phi(x,f(x))$
defines a linear operator $T$ from $C^{m}(X,E)$ to $C^{m}(X,F).$ Then $\Phi$
has the following properties.

\begin{enumerate}
\item For all $u\in E$, $\Phi(\cdot,u)$ belongs to $C^{m}(X,F)$;

\item For all $x\in X$, $\Phi(x,\cdot)$ is a linear operator from $E$ into $F$;

\item[(3$^{\prime}$)] \label{c'} If $(x_{n})$ is a sequence in $X$ converging
to some $x_{0}\in X$ and $(u_{n})$ is a bounded sequence in $E$, then
$(\Phi(x_{n},u_{n}))$ is a bounded sequence in $F$.
\end{enumerate}

\noindent Moreover, if $E$ is locally convex metrizable, then $\Phi$ is continuous.
\end{thm}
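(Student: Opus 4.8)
The plan is to establish the three listed properties of $\Phi$ and then the continuity statement in that order. Properties (1) and (2) are essentially immediate from the hypothesis that $Tf(x) = \Phi(x,f(x))$ defines a linear operator $T : C^m(X,E) \to C^m(X,F)$: for (1), apply $T$ to the constant function $\mathbf{u}$ to see that $\Phi(\cdot,u) = T\mathbf{u} \in C^m(X,F)$; for (2), fix $x \in X$, and use linearity of $T$ together with the fact that $(\alpha f + \beta g)(x) = \alpha f(x) + \beta g(x)$, noting that every vector in $E$ is the value at $x$ of some constant function in $C^m(X,E)$, so $\Phi(x,\alpha u + \beta v) = T(\alpha\mathbf{u} + \beta\mathbf{v})(x) = \alpha\Phi(x,u) + \beta\Phi(x,v)$.

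The heart of the matter is property (3$'$). I would argue by contradiction: suppose $(x_n) \to x_0$ in $X$ and $(u_n)$ is bounded in $E$, but $(\Phi(x_n,u_n))$ is unbounded in $F$. By passing to a subsequence I may assume the $x_n$ approach $x_0$ rapidly — specifically, fast enough that $|x_{n+1} - x_0| < \tfrac13|x_n - x_0| < 1$ — which is exactly the hypothesis needed to invoke Proposition \ref{[B]}. Since $(\Phi(x_n,u_n))$ is unbounded, there is a continuous seminorm (or, since $F$ is locally convex, an absolutely convex neighborhood of $0$) witnessing the unboundedness; choose scalars $c_n \to 0$ with $c_n/r_n^m \to 0$ (where $r_n = |x_n - x_0|$) but with $c_n \Phi(x_n,u_n)$ still failing to be absorbed — this is possible precisely because unboundedness means we can keep $c_n \Phi(x_n, u_n)$ from shrinking even as $c_n \to 0$ as fast as we wish polynomially in $r_n$. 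Then set $f = \sum c_n \varphi_n \mathbf{u}_n \in C^m(\mathbb{R}^p, E)$ by Proposition \ref{[B]}; restricting to $X$, we have $f \in C^m(X,E)$, and since $\varphi_n(x_n) = 1$ (as $\varphi$ is a bump equal to $1$ at the center), $f(x_n) = c_n u_n$, so $Tf(x_n) = \Phi(x_n, c_n u_n) = c_n \Phi(x_n, u_n)$ by property (2). But $Tf \in C^m(X,F) \subseteq C(X,F)$, so $Tf(x_n) \to Tf(x_0) = \Phi(x_0, f(x_0)) = \Phi(x_0, 0) = 0$, and in particular $(Tf(x_n))$ is bounded — contradicting the choice of the $c_n$.

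For the final assertion, assume in addition that $E$ is locally convex and metrizable. I want to upgrade (3$'$) to full (joint) continuity of $\Phi$, and since $X \times E$ is metrizable (as $X \subseteq \mathbb{R}^p$ and $E$ is metrizable) it suffices to prove sequential continuity. Suppose $(x_n, u_n) \to (x_0, u_0)$; I must show $\Phi(x_n, u_n) \to \Phi(x_0, u_0)$. Split $\Phi(x_n,u_n) - \Phi(x_0,u_0) = \big(\Phi(x_n,u_n) - \Phi(x_n,u_0)\big) + \big(\Phi(x_n,u_0) - \Phi(x_0,u_0)\big)$; the second term tends to $0$ by property (1), since $\Phi(\cdot,u_0) \in C^m(X,F) \subseteq C(X,F)$. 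For the first term, use property (2) to write it as $\Phi(x_n, u_n - u_0)$, and then exploit metrizability of $E$: since $u_n - u_0 \to 0$, there is a sequence of scalars $\lambda_n \to \infty$ with $\lambda_n(u_n - u_0) \to 0$ as well (a standard consequence of metrizability — e.g.\ pick $\lambda_n$ growing slowly enough relative to a metric-decreasing rate), so $(\lambda_n(u_n - u_0))$ is bounded; by (3$'$) the sequence $\Phi(x_n, \lambda_n(u_n-u_0)) = \lambda_n \Phi(x_n, u_n - u_0)$ is bounded in $F$, whence $\Phi(x_n, u_n - u_0) = \lambda_n^{-1}\big(\lambda_n \Phi(x_n,u_n-u_0)\big) \to 0$. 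Combining, $\Phi(x_n,u_n) \to \Phi(x_0,u_0)$, as required.

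The step I expect to be the main obstacle is the choice of the scalars $c_n$ in the proof of (3$'$): one must simultaneously arrange $c_n/r_n^m \to 0$ (so that Proposition \ref{[B]} applies and produces an honest $C^m$ function) and yet keep $c_n\Phi(x_n,u_n)$ from being absorbed into a fixed neighborhood of $0$ in $F$. This is where local convexity of $F$ is used: unboundedness of $(\Phi(x_n,u_n))$ with respect to some absolutely convex $0$-neighborhood $V$ means $\Phi(x_n,u_n) \notin k_n V$ for some $k_n \to \infty$ along a subsequence, and then taking $c_n$ comparable to $\min\{r_n^m/n,\ 1/\sqrt{k_n}\}$ (or any sequence going to $0$ slower than $1/k_n$ but no slower than needed against $r_n^m$) keeps $c_n\Phi(x_n,u_n) \notin V$ infinitely often while still satisfying the Proposition \ref{[B]} growth condition — one has to check these two rates are compatible, which they are since $r_n^m$ decays only geometrically while we have complete freedom in $k_n$.
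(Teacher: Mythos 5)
Your handling of (1), (2) and of the final continuity assertion is sound and essentially matches the paper's (the paper runs that last step as a contrapositive, but the content --- reduce to $u_0=0$, rescale $u_n-u_0$ by scalars $\lambda_n\to\infty$ using metrizability of $E$, and invoke (3$'$) --- is the same). The genuine gap is in (3$'$), at exactly the step you flagged. You need scalars $c_n$ with $c_n/r_n^m\to 0$ (to apply Proposition \ref{[B]}) and with $c_n\Phi(x_n,u_n)$ bounded away from $0$, or unbounded (to contradict the boundedness, indeed convergence to $0$, of $Tf(x_n)$). Writing $M_n=\rho(\Phi(x_n,u_n))$ for a continuous seminorm $\rho$ witnessing the unboundedness, this requires $c_nM_n\not\to 0$ while $c_n=o(r_n^m)$, hence forces $r_n^mM_n\to\infty$ along the chosen subsequence. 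Nothing guarantees this: after the subsequence extraction required by Proposition \ref{[B]} you only know $r_n$ decays at least geometrically (possibly much faster), while $M_n$ may tend to infinity arbitrarily slowly --- e.g.\ $M_n\sim\log n$ with $r_n=3^{-n}$ gives $r_n^mM_n\to 0$, and no further subsequence repairs this since $M_n$ and $r_n$ are tied to the same indices. Your parenthetical claim that the two rates are compatible because you "have complete freedom in $k_n$" is where the argument breaks: the rate of unboundedness of $\Phi(x_n,u_n)$ is handed to you by the data, not chosen by you.

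The paper closes this gap by extracting far more than continuity from $Tf\in C^m(X,F)$. Since $f$, and hence $g=Tf$ (by property (2)), vanishes on the annuli separating the supports of consecutive bumps, and these annuli accumulate at $x_0$, all partial derivatives of $g$ of order at most $m$ vanish at $x_0$. Passing to the completion $\overline{F_\rho}$ of the normed quotient $F/\rho^{-1}\{0\}$ and applying Taylor's formula there yields the quantitative estimate $\rho(g(x_n))=o(r_n^m)$, not merely $g(x_n)\to 0$. With the choice $c_n=r_n^m/M_n$ --- which satisfies $c_n/r_n^m\to 0$ automatically because $M_n\to\infty$, with no compatibility condition between the two rates --- one gets $\rho(g(x_n))/r_n^m=c_nM_n/r_n^m=1$ for every $n$, contradicting the Taylor estimate. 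So the repair is not a cleverer choice of $c_n$ but the use of the order-$m$ vanishing of $Tf$ at $x_0$; using only $C^0$ information about $Tf$, your argument can rule out unboundedness at rate $r_n^{-m}$ but not unboundedness per se.
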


\begin{proof}
We first show that the ``moreover" statement follows from (1), (2), and
(3$^{\prime}$). Suppose that $E$ is locally convex metrizable and $\Phi$ is
not continuous. Then there exist sequences $\left(  x_{n}\right)  $ and
$\left(  u_{n}\right)  $ converging to $x_{0}$ and $u_{0}$ in $X$ and $E$
respectively such that $\Phi\left(  x_{n},u_{n}\right)  \nrightarrow
\Phi\left(  x_{0},u_{0}\right)  .$ It follows from the local convexity of $F$
that there is a continuous seminorm $\rho$ on $F$ such that%
\[
d_{n}=\rho\left(  \Phi\left(  x_{n},u_{n}\right)  ,\Phi\left(  x_{0}%
,u_{0}\right)  \right)  \nrightarrow0.
\]
Let $v_{n}=u_{n}-u_{0}.$ Then $\left(  v_{n}\right)  $ converges to $0.$ Note
that
\begin{align*}
\Phi\left(  x_{n},v_{n}\right)   &  -\Phi\left(  x_{0},0\right)  =\Phi\left(
x_{n},u_{n}\right)  -\Phi\left(  x_{n},u_{0}\right)  -\Phi\left(
x_{0},0\right) \\
&  =\left(  \Phi\left(  x_{n},u_{n}\right)  -\Phi\left(  x_{0},u_{0}\right)
\right)  +\left(  \Phi\left(  x_{0},u_{0}\right)  -\Phi\left(  x_{n}%
,u_{0}\right)  \right)  .
\end{align*}
By (1), $\lim\limits_{n\rightarrow\infty}\left(  \Phi\left(  x_{0}%
,u_{0}\right)  -\Phi\left(  x_{n},u_{0}\right)  \right)  =0.$ Therefore,
$\Phi\left(  x_{n},v_{n}\right)  \nrightarrow\Phi\left(  x_{0},0\right)  .$
This shows that we may assume $u_{0}=0$ without loss of generality.

By using a subsequence, we may further assume that for some $c>0,$ $d_{n}%
=\rho\left(  \Phi\left(  x_{n},u_{n}\right)  \right)  \geq c$ for all $n.$
Since $E$ is locally convex metrizable, there exists a sequence of continuous
seminorms $\left(  \rho_{k}\right)  $ on $E$ which determines the topology on
$E.$ For all $k,$ $\lim\limits_{n\rightarrow\infty}\rho_{k}\left(
u_{n}\right)  =0.$ Thus there exists a sequence $\left(  \alpha_{n}\right)  $
of positive numbers diverging to $\infty$ such that $\lim\limits_{n\rightarrow
\infty}\alpha_{n}\rho_{k}\left(  u_{n}\right)  =0$ for all $k.$ Now $\left(
\alpha_{n}u_{n}\right)  $ is a bounded sequence in $E.$ But $\rho\left(
\Phi\left(  x_{n},\alpha_{n}u_{n}\right)  \right)  =\alpha_{n}d_{n}\geq
\alpha_{n}c\rightarrow\infty.$ Hence (3$^{\prime}$) fails.

We now turn to proving that $\Phi$ satisfies (1), (2) and (3$^{\prime}$). The
first two parts are clear. Suppose that (3$^{\prime}$) fails. Then there exist
sequences $\left(  x_{n}\right)  $ and $\left(  u_{n}\right)  $ in $X$ and $E$
respectively, with $\left(  x_{n}\right)  $ converging to some $x_{0}\in X$,
$\left(  u_{n}\right)  $ bounded in $E,$ such that $(\Phi(x_{n},u_{n}))$ is
unbounded in $F$. Take a continuous seminorm $\rho$ on $F$ such that $\left(
\rho(\Phi(x_{n},u_{n}))\right)  $ is unbounded. Using a subsequence if
necessary, we may assume that
\[
3r_{n+1}=3\left\vert x_{n+1}-x_{0}\right\vert <\left\vert x_{n}-x_{0}%
\right\vert =r_{n}<1\text{ for all }n
\]
and that $\lim\limits_{n\rightarrow\infty}\rho(\Phi(x_{n},u_{n}))=\infty.$ Set
$c_{n}=\frac{1}{\rho(\Phi(x_{n},u_{n}))}r_{n}^{m}$ for all $n.$ By Proposition
\ref{[B]}, $f=\sum c_{n}\varphi_{n}\mathbf{u}_{n}\in C^{m}\left(  X,E\right)
.$ Therefore, $g(x)=\Phi(x,f(x))\in C^{m}\left(  X,F\right)  .$ By (2),
$g\left(  x\right)  =0$ whenever $f\left(  x\right)  =0.$ For any $n,$ $a,$
and $b$ such that
\[
\frac{3r_{n+1}}{2}<a<\frac{r_{n}}{2}\text{ and }\frac{3r_{n}}{2}%
<b<\frac{r_{n-1}}{2}\text{,}%
\]
$f=0$ on $A_{a,b}=\left\{  x\in X:a<\left\vert x-x_{0}\right\vert <b\right\}
.$ Thus $g=0$ on $A_{a,b}.$ By continuity, we have $\partial^\lambda g\left(  x_{0}\right)  =0$ if $|\lambda| \leq m.$ Let $i:F\rightarrow
\overline{F_{\rho}}$ be the natural map, where $F_{\rho}$ is the quotient
space $F/\rho^{-1}\left\{  0\right\}  $ normed by $\rho$ and $\overline
{F_{\rho}}$ is its completion. Then $G=i\circ g\in C^{m}(X,\overline{F_{\rho}%
}).$ By Taylor's Formula (see e.g., \cite[p.115]{L}), denoting by $\left(
x_{n}-x_{0}\right)  ^{\left(  m\right)  }$ the vector in $\left(
\mathbb{R}^{p}\right)  ^{m}$ with coordinates $x_{n}-x_{0}$ repeated
$m$-times,
\[
G\left(  x_{n}\right)  =G\left(  x_{0}\right)  +\sum_{k=1}^{m}\frac{D^{\left(
k\right)  }G\left(  x_{0}\right)  \left(  x_{n}-x_{0}\right)  ^{\left(
m\right)  }}{k!}+E_{n},
\]
where $\lim\limits_{n\rightarrow\infty}\frac{E_{n}}{\left\vert x_{n}%
-x_{0}\right\vert ^{m}}=0.$ Since $D^{\left(  k\right)  }g\left(
x_{0}\right)  =0$ for $0\leq k\leq m,$ we have $D^{\left(  k\right)  }G\left(
x_{0}\right)  =0$ for $0\leq k\leq m.$ Thus $\lim\limits_{n\rightarrow\infty
}\frac{G\left(  x_{n}\right)  }{\left\vert x_{n}-x_{0}\right\vert ^{m}}=0.$ It
follows that%
\begin{align*}
0  &  =\lim\limits_{n\rightarrow\infty}\frac{\rho\left(  g\left(
x_{n}\right)  \right)  }{\left\vert x_{n}-x_{0}\right\vert ^{m}}
=\lim\limits_{n\rightarrow\infty}\frac{\rho\left(  \Phi(x_{n},f(x_{n}%
))\right)  }{r_{n}^{m}}\\
&  =\lim\limits_{n\rightarrow\infty}\frac{\rho\left(  \Phi(x_{n},c_{n}%
u_{n})\right)  }{r_{n}^{m}}=\lim\limits_{n\rightarrow\infty}\frac{c_{n}%
\rho\left(  \Phi(x_{n},u_{n})\right)  }{r_{n}^{m}}=1,
\end{align*}
a contradiction.
\end{proof}

When $E$ and $F$ are locally convex Fr\'{e}chet spaces, Theorem \ref{thm10}
can be strengthened to yield continuity of the partial derivatives of $\Phi$
with respect to the coordinates of $x.$ The results also holds for $m=\infty$.
The assumption of completeness is crucial here as Examples \ref{Exm} and
\ref{Example_infinity} will show. The idea for Theorem \ref{thm11} comes from
\cite{A2-1}, especially \S 5.

\begin{thm}
\label{thm11} Let $X$ be an open subset of ${\mathbb{R}}^{p}$, $p\in
{\mathbb{N}},$ and let $m\in{\mathbb{N}}\cup\{\infty\}$. Assume that $E$ and
$F$ are locally convex Fr\'{e}chet spaces. Suppose that the map $\Phi:X\times
E\rightarrow F$ has the property that $Tf(x)=\Phi(x,f(x))$ defines a linear
operator $T$ from $C^{m}(X,E)$ into $C^{m}(X,F)$.

\begin{enumerate}
\item If $C^{m}(X,E)$ and $C^{m}(X,F)$ are endowed with complete linear metric topologies
that are stronger than the respective topologies of pointwise convergence,
then $T$ is continuous;

\item For any $\lambda$ with $|\lambda|<m+1$, the map $\Phi_{\lambda}:X\times
E\rightarrow F$ defined by $\Phi_{\lambda}(x,u)=(\partial^{\lambda}%
T\mathbf{u})(x)$ is continuous. (We adopt the convention $\infty+1=\infty$).
\end{enumerate}
\end{thm}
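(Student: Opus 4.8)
The plan is to derive both parts from the closed graph theorem, with the single substantive ingredient being the continuity of the map $\Phi$.

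\emph{Continuity of $\Phi$.} First I would show that $\Phi\colon X\times E\to F$ is continuous. When $m\in\mathbb{N}$ this is exactly Theorem \ref{thm10}: since $E$ and $F$ are Fr\'echet they are locally convex and $E$ is metrizable, so the hypotheses and the ``moreover'' clause of that theorem apply. When $m=\infty$ one cannot quote Theorem \ref{thm10} directly, but its proof adapts. Properties (1) and (2) hold as before; to verify the boundedness condition (3$'$) — whenever $x_n\to x_0$ in $X$ and $(u_n)$ is bounded in $E$, $(\Phi(x_n,u_n))$ is bounded in $F$ — suppose it fails. Then, after passing to a subsequence with $3|x_{n+1}-x_0|<|x_n-x_0|<1$, we may assume $\rho(\Phi(x_n,u_n))\to\infty$ for some continuous seminorm $\rho$ on $F$. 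Choose coefficients $c_n>0$ so that $c_n/r_n^k\to0$ for every $k\in\mathbb{N}$, where $r_n=|x_n-x_0|$ (so that $f=\sum c_n\varphi_n\mathbf{u}_n\in C^\infty(X,E)$, applying Proposition \ref{[B]} for each $k$), while $c_n\rho(\Phi(x_n,u_n))/r_n^k$ does not tend to $0$ for some $k$; then $\Phi(\cdot,f(\cdot))\in C^\infty(X,F)$ vanishes on annuli accumulating at $x_0$, so all its derivatives vanish at $x_0$, and Taylor's formula applied to it (exactly as in the proof of Theorem \ref{thm10}) gives a contradiction. With (1), (2) and (3$'$) in place, the ``moreover'' argument of Theorem \ref{thm10}, which uses only metrizability of $E$ and local convexity of $F$, shows that $\Phi$, and hence each linear map $\Phi(x,\cdot)\colon E\to F$, is continuous. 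I expect this last step — selecting the subsequence together with the coefficients $c_n$ in the $m=\infty$ case — to be the main obstacle, since the growth rate of $\rho(\Phi(x_n,u_n))$ is not controlled a priori and has to be reconciled with the essentially geometric decay of $r_n$ forced by disjointness of the bumps; one may need to thin $(x_n)$ further so that $\rho(\Phi(x_n,u_n))$ dominates every inverse power of $r_n$.

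\emph{Part (1).} Endow $C^m(X,E)$ and $C^m(X,F)$ with the given complete linear metric topologies; these are $F$-spaces, so by the closed graph theorem it suffices to check that the graph of $T$ is closed. Suppose $f_n\to f$ in $C^m(X,E)$ and $Tf_n\to g$ in $C^m(X,F)$. Since both topologies are stronger than pointwise convergence, $f_n(x)\to f(x)$ in $E$ and $Tf_n(x)\to g(x)$ in $F$ for every $x\in X$. As $Tf_n(x)=\Phi(x,f_n(x))$ and $\Phi(x,\cdot)$ is continuous, $Tf_n(x)\to\Phi(x,f(x))=Tf(x)$. Hence $g=Tf$, the graph is closed, and $T$ is continuous.

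\emph{Part (2).} The standard topology on $C^m(X,E)$ — uniform convergence on compact subsets of $X$ of all partial derivatives of order $<m+1$, measured through the continuous seminorms of $E$ — is complete, metrizable, and stronger than pointwise convergence, and likewise for $C^m(X,F)$; so by part (1), $T$ is continuous between these. Fix $\lambda$ with $|\lambda|<m+1$ and let $(x_n,u_n)\to(x_0,u_0)$ in $X\times E$. Then
\[
\Phi_\lambda(x_n,u_n)-\Phi_\lambda(x_0,u_0)=\bigl(\partial^\lambda T\mathbf{u}_n(x_n)-\partial^\lambda T\mathbf{u}_0(x_n)\bigr)+\bigl(\partial^\lambda T\mathbf{u}_0(x_n)-\partial^\lambda T\mathbf{u}_0(x_0)\bigr).
\]
The second bracket tends to $0$ because $T\mathbf{u}_0\in C^m(X,F)$, so $\partial^\lambda T\mathbf{u}_0$ is continuous. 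The first bracket equals $\partial^\lambda T\mathbf{w}_n(x_n)$ with $w_n=u_n-u_0\to0$; since $\mathbf{w}_n\to0$ in the standard topology of $C^m(X,E)$, continuity of $T$ gives $T\mathbf{w}_n\to0$ in $C^m(X,F)$, hence $\sup_{x\in K}\rho(\partial^\lambda T\mathbf{w}_n(x))\to0$ for the compact set $K=\{x_0\}\cup\{x_n:n\in\mathbb{N}\}\subseteq X$ and every continuous seminorm $\rho$ on $F$; in particular $\partial^\lambda T\mathbf{w}_n(x_n)\to0$. Therefore $\Phi_\lambda(x_n,u_n)\to\Phi_\lambda(x_0,u_0)$, so $\Phi_\lambda$ is continuous.
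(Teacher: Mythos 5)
Your treatment of the case $m\in{\mathbb N}$ is fine and close in spirit to the paper's, and your Part (1) and Part (2) deductions (closed graph theorem, then the telescoping estimate for $\Phi_\lambda$ using continuity of $T$ in the topology of uniform convergence on compacta of all partials) are essentially the paper's arguments. The genuine gap is the first step in the case $m=\infty$: your plan is to establish condition (3$'$) of Theorem \ref{thm10} and then run the ``moreover'' argument, but that argument nowhere uses completeness of $E$ or $F$, and Example \ref{Example_infinity} shows that for $E=F=c_{00}$ (locally convex, metrizable, not complete) the map $\Phi(0,\cdot)$ can be an unbounded linear map even though $T$ carries $C^\infty({\mathbb R},c_{00})$ into itself; hence (3$'$) simply fails there, and no completeness-free argument can prove it. Concretely, your proposed repair --- thinning $(x_n)$ so that $\rho(\Phi(x_n,u_n))$ dominates every inverse power of $r_n$ --- is impossible in general: if $\rho(\Phi(x_n,u_n))\sim r_n^{-1}$ (which is exactly what happens in Example \ref{Example_infinity}), then along every subsequence $c_n/r_n^{k}\to 0$ for all $k$ forces $c_n\rho(\Phi(x_n,u_n))/r_n^{k}\to 0$ as well, so the Taylor-formula contradiction never materializes. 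The obstacle you flagged is not a technicality; it is where the approach breaks.

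The paper circumvents this by proving something weaker than (3$'$), namely Lemma \ref{c_n_infty}: for prescribed coefficients $c_n$ with $c_n/r_n^k\to0$ for all $k$, one still gets $c_n\Phi(x_n,u_n)/r_n^k\to0$. This is then used in Lemma \ref{Continuousondense}, where the roles are reversed: if $q\circ\Phi(x_n,\cdot)$ were discontinuous for infinitely many $x_n$ in a compact set, one may \emph{choose} $u_n$ bounded with $\rho(\Phi(x_n,u_n))\geq 1/c_n$ for $c_n=r_n^n$, because a discontinuous linear map into the normed quotient $F_\rho$ is unbounded on every bounded neighborhood; this does contradict Lemma \ref{c_n_infty}. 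The conclusion is only that $\Phi(x,\cdot)$ is continuous off a countable subset of each compact set --- but that suffices for the closed-graph argument, since $Tf$ and $g$ are continuous and agree on a co-countable, hence dense, set. Full (joint) continuity of $\Phi=\Phi_0$ is then recovered \emph{a posteriori} from Part (2), not assumed beforehand; completeness of $E$ and $F$ enters only through the closed graph theorem, via the completeness of $C^m(X,E)$ and $C^m(X,F)$ in the compact-open $C^m$-topologies. To fix your proof, replace your first step for $m=\infty$ by this pair of lemmas (or an equivalent), and weaken the pointwise-continuity claim in your Part (1) accordingly.
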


In the next two lemmas, $E$ and $F$ are locally convex Fr\'{e}chet spaces.

\begin{lem}
\label{c_n_infty} Assume that $\Phi$ and $T$ satisfy the hypotheses of Theorem
\ref{thm11} for some $m \in{\mathbb{N}} \cup\{\infty\}$. Let $\left(
u_{n}\right)  $ be a bounded sequence in $E$ and $\left(  x_{n}\right)  $ be a
sequence in ${\mathbb{R}}^{p}$ converging to a point $x_{0}$ such that
\[
3r_{n+1}<r_{n}<1\text{ for all }n,
\]
where $r_{n}=\left\vert x_{n}-x_{0}\right\vert .$ If $\left(  c_{n}\right)  $
is a sequence of real numbers such that
\begin{align*}
& \lim_{n\rightarrow\infty}\frac{c_{n}}{r_{n}^{m}}=0\text{ if $m
\in{\mathbb{N}}$ }\text{,}\\
& \lim_{n\rightarrow\infty}\frac{c_{n}}{r_{n}^{k}}=0\text{ for all }%
k\in\mathbb{N}\text{, if $m =\infty$},
\end{align*}
then
\begin{align*}
& \lim_{n\rightarrow\infty}\frac{c_{n}\Phi\left(  x_{n},u_{n}\right)  }%
{r_{n}^{m}}=0\text{ if $m \in{\mathbb{N}}$ },\\
& \lim_{n\rightarrow\infty}\frac{c_{n}\Phi\left(  x_{n},u_{n}\right)  }%
{r_{n}^{k}}=0\text{ for all }k\in\mathbb{N}\text{, if $m =\infty$}.
\end{align*}

\end{lem}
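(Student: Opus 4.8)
The plan is to argue by contradiction along the lines of the proof of Theorem \ref{thm10}, but first \emph{amplifying} the coefficients $(c_n)$ so that a mild growth of $\Phi(x_n,u_n)$ is turned into a genuine obstruction. Recall that in this section $Tf(x)=\Phi(x,f(x))$. Fix a continuous seminorm $\rho$ on $F$; writing $\ell=m$ when $m\in\mathbb N$ and taking $\ell=k$ to be an arbitrary fixed positive integer when $m=\infty$, it suffices to prove that $\rho(c_n\Phi(x_n,u_n))/r_n^\ell\to0$. Suppose not. Passing to a subsequence and relabelling (the hypotheses $3r_{n+1}<r_n<1$ and $x_n\to x_0$ survive this passage), we find $\varepsilon>0$ with $\rho(c_n\Phi(x_n,u_n))\ge\varepsilon r_n^\ell$ for all $n$; in particular $c_n\neq0$, so $\rho(\Phi(x_n,u_n))\ge\varepsilon r_n^\ell/|c_n|$. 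Since $x_0\in X$ and $X$ is open, after discarding finitely many further indices the balls $B(x_n,r_n/2)$ all lie in $X$.

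Next choose amplified coefficients $(d_n)$ with $|d_n|/|c_n|\to\infty$ while keeping the decay required by Proposition \ref{[B]}. For $m\in\mathbb N$ put $d_n=\operatorname{sgn}(c_n)\sqrt{|c_n|\,r_n^m}$, so that $|d_n|/|c_n|=\sqrt{r_n^m/|c_n|}\to\infty$ and $d_n/r_n^m=\operatorname{sgn}(c_n)\sqrt{|c_n|/r_n^m}\to0$. For $m=\infty$, a diagonal argument supplies integers $j(n)\to\infty$ with $|c_n|/r_n^{j(n)}\to0$, and then $d_n=\operatorname{sgn}(c_n)\sqrt{|c_n|\,r_n^{j(n)}}$ satisfies $|d_n|/|c_n|\to\infty$ and $d_n/r_n^j\to0$ for every fixed $j\in\mathbb N$. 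In either case Proposition \ref{[B]} applies at every order $\le m$, so, using that $(u_n)$ is bounded, $\tilde f:=\sum_n d_n\varphi_n\mathbf{u}_n\in C^m(X,E)$. Hence $\tilde g:=T\tilde f\in C^m(X,F)$ and $\tilde g(x)=\Phi(x,\tilde f(x))$; and since the balls $B(x_n,r_n/2)$ are pairwise disjoint and $\varphi_n(x_n)=1$, linearity of $\Phi(x_n,\cdot)$ gives $\tilde g(x_n)=\Phi(x_n,d_nu_n)=d_n\Phi(x_n,u_n)$.

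Because no $\varphi_j$ is supported on the annulus $\{3r_{n+1}/2<|x-x_0|<r_n/2\}$, the function $\tilde f$, hence also $\tilde g$, vanishes there; as these annuli accumulate at $x_0$ and $\tilde g\in C^m$, continuity of its partial derivatives forces $\partial^\lambda\tilde g(x_0)=0$ for all $|\lambda|\le m$, exactly as in the proof of Theorem \ref{thm10}. Composing with the canonical map $i:F\to\overline{F_\rho}$ into the Banach completion $\overline{F_\rho}$ of $F/\rho^{-1}\{0\}$ gives $G:=i\circ\tilde g\in C^m(X,\overline{F_\rho})$ with all derivatives of order $\le\ell$ vanishing at $x_0$, so Taylor's formula in $\overline{F_\rho}$ (used precisely as in the proof of Theorem \ref{thm10}) yields $\rho(\tilde g(x_n))=\|G(x_n)\|_{\overline{F_\rho}}=o(r_n^\ell)$. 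On the other hand $\rho(\tilde g(x_n))/r_n^\ell=|d_n|\,\rho(\Phi(x_n,u_n))/r_n^\ell\ge\varepsilon\,|d_n|/|c_n|\to\infty$, which is the desired contradiction.

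The support bookkeeping for the $\varphi_n$, the reduction to a single seminorm $\rho$, and the quotient-and-Taylor passage are all handled exactly as in the proof of Theorem \ref{thm10}; the one genuinely new point is the amplification of the coefficients, and within it the $m=\infty$ case, where $(d_n)$ must out-grow $(c_n)$ yet still decay faster than every power of $r_n$. That is the step I expect to need the most care, and it is resolved by the diagonal choice of exponents $j(n)\to\infty$ with $|c_n|/r_n^{j(n)}\to0$.
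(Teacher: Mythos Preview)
Your argument is correct, but it is considerably more elaborate than what the paper does, and the extra machinery is not needed.

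For $m\in\mathbb{N}$ the paper simply invokes condition~(3$'$) of Theorem~\ref{thm10}: the sequence $(\Phi(x_n,u_n))$ is bounded in $F$, so $c_n\Phi(x_n,u_n)/r_n^m=(c_n/r_n^m)\Phi(x_n,u_n)\to 0$ because a null scalar sequence times a bounded vector sequence is null. No contradiction, no amplification.

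For $m=\infty$ the paper again proceeds \emph{directly}: it builds $f=\sum c_n\varphi_n\mathbf{u}_n$ with the \emph{original} coefficients $c_n$ (Proposition~\ref{[B]} applies at every order since $c_n/r_n^k\to0$ for all $k$), sets $g=Tf\in C^\infty(X,F)$, and runs the Taylor argument from the proof of Theorem~\ref{thm10} to conclude $\rho(g(x_n))/r_n^k\to0$ for every continuous seminorm $\rho$ and every $k$. Since $g(x_n)=c_n\Phi(x_n,u_n)$, the result follows.

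Your amplification idea (choosing $d_n$ with $|d_n|/|c_n|\to\infty$ but $d_n/r_n^j\to0$, via a diagonal choice of exponents when $m=\infty$) is sound and would also be the natural move if one only knew $\rho(g(x_n))=O(r_n^\ell)$ rather than $o(r_n^\ell)$; but Taylor's formula already gives the little-$o$ estimate, so the contradiction framework and the amplified coefficients can be dispensed with entirely. What your approach does buy is a single uniform argument for both the finite and infinite cases, whereas the paper handles $m\in\mathbb N$ by a one-line appeal to Theorem~\ref{thm10}.
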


\begin{proof}
If $m \in{\mathbb{N}}$, the conclusion follows since $(\Phi(x_{n}, u_{n}))$ is
a bounded sequence by ($3^{\prime}$) of Theorem \ref{thm10}. Consider the case
$m = \infty$. Let $\varphi$ be a $C^{\infty}$-bump on ${\mathbb{R}}^{p}$ and
set $\varphi_{n}\left(  x\right)  =\varphi\left(  \frac{x-x_{n}}{r_{n}%
/2}\right) .$ According to Proposition \ref{[B]}, $f=\sum c_{n}\varphi
_{n}\mathbf{u}_{n}\in C^{\infty}\left(  X,E\right) $. Let $g\left(  x\right)
=Tf\left(  x\right)  =\Phi(x,f(x)).$ Then $g\in C^{\infty}\left(  X,F\right)
$. Using the same proof as in Theorem \ref{thm10}, we find that for any
continuous seminorm $\rho$ on $F$ and any $k\in{\mathbb{N}}$,%
\begin{align*}
0  &  =\lim\limits_{n\rightarrow\infty}\frac{\rho\left(  g\left(
x_{n}\right)  \right)  }{\left\vert x_{n}-x_{0}\right\vert ^{k}}
=\lim\limits_{n\rightarrow\infty}\frac{\rho\left(  \Phi(x_{n},f(x_{n}%
))\right)  }{r_{n}^{k}}\\
& =\lim\limits_{n\rightarrow\infty}\frac{\rho\left(  \Phi(x_{n},c_{n}%
u_{n})\right)  }{r_{n}^{k}}=\lim\limits_{n\rightarrow\infty}\rho\left(
\frac{c_{n}\Phi(x_{n},u_{n})}{r_{n}^{k}}\right)  .
\end{align*}
The lemma follows since the topology of $F$ is determined by continuous seminorms.
\end{proof}

\begin{lem}
\label{Continuousondense} Assume that $\Phi$ and $T$ satisfy the hypotheses of
Theorem \ref{thm11} for some $m \in{\mathbb{N}} \cup\{\infty\}$. Given any
compact set $K\subseteq X,$ there are only countably many $x\in K$ at which
$\Phi\left(  x,\cdot\right)  :E\rightarrow F$ is not continuous.
\end{lem}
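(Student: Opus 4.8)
The plan is to argue by contradiction, using the bump-function estimates already established. Suppose some compact $K\subseteq X$ contains uncountably many points $x$ at which the (linear, by part (2) of Theorem \ref{thm10}) map $\Phi(x,\cdot)\colon E\to F$ is discontinuous. Fix an increasing sequence $(\rho_j)$ of continuous seminorms generating the topology of the Fr\'echet space $F$. A linear map into $F$ is continuous precisely when each $\rho_j\circ\Phi(x,\cdot)$ is a continuous seminorm on $E$, so every such $x$ is ``bad'' for some index $j$; by a pigeonhole argument there is one index $j$ for which $B:=\{x\in K:\rho_j\circ\Phi(x,\cdot)\text{ is discontinuous}\}$ is uncountable. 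Write $\rho=\rho_j$. Being an infinite subset of the compact metric space $K$, $B$ has a limit point, so I can choose pairwise distinct points $x_n\in B$ with $x_n\to x_0\in K\subseteq X$; passing to a subsequence and discarding finitely many terms I may assume $3r_{n+1}<r_n<1$ and $\overline{B(x_n,r_n/2)}\subseteq X$, where $r_n=|x_n-x_0|>0$.

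First I would exploit discontinuity together with homogeneity to produce inputs that are small in $E$ but have large image. Fix an increasing sequence $(\sigma_k)$ of seminorms generating the topology of $E$. Since, for each $n$, the seminorm $\rho\circ\Phi(x_n,\cdot)$ on $E$ is dominated by no multiple of any $\sigma_k$, it is unbounded on each set $\{\sigma_k\le 1\}$; scaling by small positive constants then shows that for every $k$, $\varepsilon>0$ and $M>0$ there is $u\in E$ with $\sigma_k(u)\le\varepsilon$ and $\rho(\Phi(x_n,u))\ge M$. Hence, for a sequence $(M_n)$ of positive reals still to be fixed, I pick $u_n\in E$ with $\sigma_n(u_n)\le 1/n$ and $\rho(\Phi(x_n,u_n))\ge M_n$. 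Because $(\sigma_k)$ is increasing, $u_n\to 0$ in $E$, so $(u_n)$ is a bounded sequence.

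Next I would feed this data into Lemma \ref{c_n_infty}. If $m\in{\mathbb N}$, set $c_n=r_n^{m}/n$ and $M_n=n$; if $m=\infty$, set $c_n=r_n^{n}$ and $M_n=r_n^{1-n}$. In either case the decay hypothesis of Lemma \ref{c_n_infty} is met: $c_n/r_n^{m}=1/n\to 0$ in the first case, and $c_n/r_n^{k}=r_n^{n-k}\to 0$ for every $k$ in the second (recall $r_n<1$). That lemma, applied to $(u_n)$, $(x_n)$, $(c_n)$, then gives $c_n\Phi(x_n,u_n)/r_n^{m}\to 0$ in $F$ when $m\in{\mathbb N}$, and $c_n\Phi(x_n,u_n)/r_n\to 0$ in $F$ when $m=\infty$. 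Applying the seminorm $\rho$ and using $c_n>0$ yields $c_n\rho(\Phi(x_n,u_n))/r_n^{m}\to 0$ (respectively $c_n\rho(\Phi(x_n,u_n))/r_n\to 0$). But the choice $\rho(\Phi(x_n,u_n))\ge M_n$ forces $c_n\rho(\Phi(x_n,u_n))/r_n^{m}\ge c_nM_n/r_n^{m}=1$ for all $n$ (respectively $c_n\rho(\Phi(x_n,u_n))/r_n\ge c_nM_n/r_n=1$), the desired contradiction.

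I expect the only genuine obstacle to be the homogeneity observation in the second step: once one sees that discontinuity of the \emph{linear} seminorm $\rho\circ\Phi(x_n,\cdot)$ supplies inputs that are simultaneously arbitrarily small in $E$ and arbitrarily large in $\rho\circ\Phi(x_n,\cdot)$-value, everything else is bookkeeping --- choosing $c_n$ small enough in powers of $r_n$ to keep the associated function in $C^m$ (the decay condition of Lemma \ref{c_n_infty}) while choosing $M_n$ large enough that the product $c_nM_n$ is \emph{not} small in the same power of $r_n$, thereby contradicting the decay that Lemma \ref{c_n_infty} forces. The pigeonhole reduction to a single seminorm $\rho$ is also essential, since the conclusion of Lemma \ref{c_n_infty} can only be tested against one fixed seminorm of $F$ at a time.
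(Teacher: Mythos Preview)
Your proof is correct and follows essentially the same route as the paper. The only organizational difference is that the paper fixes a single seminorm $\rho$ on $F$ first and proves the stronger claim that there are only \emph{finitely} many $x\in K$ for which $\rho\circ\Phi(x,\cdot)$ is discontinuous (then takes a countable union over generating seminorms), whereas you start from uncountably many bad points and pigeonhole down to one seminorm; the core contradiction via Lemma~\ref{c_n_infty} with a bounded sequence $(u_n)$ chosen from the discontinuity at each $x_n$ is identical.
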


\begin{proof}
Let $\rho$ be a continuous seminorm on $F$ and $q:F\rightarrow F_{\rho}$ be
the quotient map, where $F_{\rho}$ is the quotient space $F/\rho^{-1}\left\{
0\right\}  $ normed by $\rho.$ We claim that $q\circ\Phi\left(  x,\cdot
\right)  :E\rightarrow F_{\rho}$ is continuous for all but finitely many $x\in
K.$ Suppose, to the contrary, that there exists a sequence $\left(
x_{n}\right)  $ converging to $x_{0}$ in $K$ such that $q\circ\Phi\left(
x_{n},\cdot\right)  $ is discontinuous for all $n\in\mathbb{N}$. By taking a
subsequence if necessary, we may assume that $3r_{n+1}<r_{n}<1$ for all $n,$
where $r_{n}=\left\vert x_{n}-x_{0}\right\vert $. Let $c_{n}=r_{n}^{n}$ if $m
= \infty$ and $c_{n} = r_{n}^{m}$ if $m \in{\mathbb{N}}$. Suppose that the
topology on $E$ is determined by a sequence of seminorms $\left(  \rho
_{k}\right)  .$ For every $n,$ by the discontinuity of $q\circ\Phi\left(
x_{n},\cdot\right)  ,$ there exists $u_{n}\in E$ such that $\rho_{k}\left(
u_{n}\right)  \leq1,1\leq k\leq n,$ and
\[
\rho\left(  \Phi\left(  x_{n},u_{n}\right)  \right)  \geq\frac{1}{c_{n}}.
\label{LowerBddr(u_n)}%
\]
The sequence $\left(  u_{n}\right)  $ is bounded in $E.$ However,
\[
\lim\limits_{n\rightarrow\infty}\frac{c_{n}\rho\left(  \Phi(x_{n}%
,u_{n})\right)  }{r_{n}^{k}}\neq0
\]
for any $k \in{\mathbb{N}}$, contrary to Lemma \ref{c_n_infty}. This proves
the claim. Since the topology of $F$ is determined by countably many
seminorms, the lemma follows.
\end{proof}

\begin{proof}
[Proof of Theorem \ref{thm11}](1) It suffices to show that the graph of $T$ is
closed. Suppose that $f_{n}\rightarrow f$ in $C^{m}\left(  X,E\right)  $ and
$Tf_{n}\rightarrow g$ in $C^{m}\left(  X,F\right)  .$ Since the topologies on
$C^{m}\left(  X,E\right)  $ and $C^{m}\left(  X,F\right)  $ are stronger than
the topologies of pointwise convergence, $f_{n}\left(  x\right)  \rightarrow
f\left(  x\right)  $ and $Tf_{n}(x)\rightarrow g\left(  x\right)  $ for all
$x\in X.$ At any $x$ where $\Phi\left(  x,\cdot\right)  $ is continuous,
\[
Tf_{n}\left(  x\right)  =\Phi\left(  x,f_{n}\left(  x\right)  \right)
\rightarrow\Phi\left(  x,f\left(  x\right)  \right)  =Tf\left(  x\right)  .
\]
By Lemma \ref{Continuousondense}, $Tf_{n}\left(  x\right)  \rightarrow
Tf\left(  x\right)  $ for all $x\ $in a co-countable subset of $X$. Hence
$Tf=g$ on a dense subset of $X$. By continuity, $Tf=g.$

(2) Endow $C^{m}(X,E)$ and $C^{m}(X,F)$ with the respective topologies of
uniform convergence on compact sets of all partial derivatives of order
$<m+1$. Then the hypothesis of (1) is satisfied.

If $\left(  x_{n}\right)  $ and $\left(  u_{n}\right)  $ are sequences in $X$
and $E$ converging to $x_{0}\in X$ and $u_{0}\in E$ respectively, then the
sequence of constant functions $\left(  \mathbf{u}_{n}\right)  $ converges to
$\mathbf{u}_{0}$ in the given topology of $C^{m}\left(  X,E\right)  $. By
continuity of $T,$ $\left(  T\mathbf{u}_{n}\right)  $ converges to
$T\mathbf{u}_{0}$ in $C^{m}\left(  Y,F\right)  $. Since $K=\left\{
x_{n}\right\}  _{n=1}^{\infty}\cup\left\{  x_{0}\right\}  $ is compact,
$\partial^{\lambda}T\mathbf{u}_{n}\rightarrow\partial^{\lambda}T\mathbf{u}%
_{0}$ uniformly on $K$ for all $\lambda,$ $\left\vert \lambda\right\vert
<m+1.$ Thus $\lim\limits_{n\rightarrow\infty}\left(  \left(  \partial
^{\lambda}T\mathbf{u}_{n}\right)  \left(  x_{n}\right)  -\left(
\partial^{\lambda}T\mathbf{u}_{0}\right)  \left(  x_{n}\right)  \right)  =0.$
Hence
\begin{align*}
\Phi_{\lambda}\left(  x_{n},u_{n}\right)  -\Phi_{\lambda}\left(  x_{0}%
,u_{0}\right)   &  =\left(  \partial^{\lambda}T\mathbf{u}_{n}\right)  \left(
x_{n}\right)  -\left(  \partial^{\lambda}T\mathbf{u}_{0}\right)  \left(
x_{0}\right) \\
&  =\left(  \partial^{\lambda}T\mathbf{u}_{n}\right)  \left(  x_{n}\right)
-\left(  \partial^{\lambda}T\mathbf{u}_{0}\right)  \left(  x_{n}\right) \\
&  +\left(  \partial^{\lambda}T\mathbf{u}_{0}\right)  \left(  x_{n}\right)
-\left(  \partial^{\lambda}T\mathbf{u}_{0}\right)  \left(  x_{0}\right)
\end{align*}
converges to $0,$ keeping in mind the continuity of $\partial^{\lambda
}T\mathbf{u}_{0}.$
\end{proof}

For ease of reference, let us summarize the results from the preceding section
and the present one into a unified statement.

\begin{thm}
\label{Unified}Let $X$ and $Y$ be open subsets of ${\mathbb{R}}^{p}$ and
${\mathbb{R}}^{q}$ respectively, $p,q\in{\mathbb{N}}$, and let $E$ and $F$ be
Hausdorff topological vector spaces. Assume that $m,n\in{\mathbb{N}}%
\cup\{0,\infty\}$ and $T:C^{m}(X,E)\rightarrow C^{n}(Y,F)$ is a vector space
isomorphism so that $T$ preserves common zeros. Then $p=q$, $m=n$, and there
are a $C^{m}$-diffeomorphism $h:Y\rightarrow X$ and a map $\Phi:Y\times
E\rightarrow F$ such that

\begin{itemize}
\item[(a)] For all $u\in E,$ $\Phi\left(  \cdot,u\right)  $ belongs to
$C^{m}\left(  Y,F\right)  ;$

\item[(b)] For all $y\in Y,$ $\Phi\left(  y,\cdot\right)  $ is a vector space
isomorphism from $E$ onto $F;$

\item[(c)] $\left(  Tf\right)  \left(  y\right)  =\Phi\left(  y,f\left(
h\left(  y\right)  \right)  \right)  $ for all $f\in C^{m}\left(  X,E\right)
$ and all $y\in Y;$

\item[(d)] If $E$ and $F$ are locally convex metrizable and $m \in{\mathbb{N}%
}$, then $\Phi$ is continuous on $Y\times F.$ In particular, $E$ and $F$ are
isomorphic as topological vector spaces;

\item[(e)] If $E$ and $F$ and both locally convex Fr\'{e}chet, then the
conclusion in (d) also holds for $m=\infty.$ Furthermore, for any $\lambda$
with $\left\vert \lambda\right\vert <m+1,$ the map $\Phi_{\lambda}:Y\times
E\rightarrow F$ defined by $\Phi_{\lambda}\left(  y,u\right)  =\left(
\partial^{\lambda}T\mathbf{u}\right)  \left(  y\right)  $ is continuous, and
$T$ is continuous whenever $C^{m}\left(  X,E\right)  $ and $C^{m}\left(
Y,F\right)  $ are given complete linear metric topologies stronger than the respective
topologies of pointwise convergence.
\end{itemize}
\end{thm}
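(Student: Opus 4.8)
The statement is a consolidation of results already established, so the proof is essentially a matter of invoking them and reconciling hypotheses. The plan is as follows. First I would apply Theorem~\ref{thm8} with $A(X,E)=C^{m}(X,E)$ and $A(Y,F)=C^{n}(Y,F)$; this yields $p=q$, $m=n$, a $C^{m}$-diffeomorphism $h:Y\to X$, and for each $y\in Y$ a vector space isomorphism $S_{y}:E\to F$ with $Tf(y)=S_{y}(f(h(y)))$. Define $\Phi:Y\times E\to F$ by $\Phi(y,u)=S_{y}(u)=(T\mathbf{u})(y)$. Then (c) is the displayed formula of Theorem~\ref{thm8}, (b) records that each $S_{y}$ is a vector space isomorphism, and (a) holds because $\mathbf{u}\in C^{m}(X,E)$ for every $u\in E$, so $\Phi(\cdot,u)=T\mathbf{u}$ lies in $C^{m}(Y,F)$.

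For the continuity assertions (d) and (e) I would normalise, as in the opening paragraph of \S5, by composing with the linear isomorphism $J:C^{m}(Y,F)\to C^{m}(X,F)$, $(Jg)(x)=g(h^{-1}(x))$ (an isomorphism because $h$ is a $C^{m}$-diffeomorphism). The map $\widetilde{T}=J\circ T:C^{m}(X,E)\to C^{m}(X,F)$ is then a vector space isomorphism with $(\widetilde{T}f)(x)=\Psi(x,f(x))$, where $\Psi(x,u)=\Phi(h^{-1}(x),u)$, so that $\Phi=\Psi\circ(h\times\mathrm{id}_{E})$. I would also note that $J$ is a homeomorphism from the topology of pointwise convergence on $C^{m}(Y,F)$ onto that on $C^{m}(X,F)$, so that complete linear metric topologies stronger than pointwise convergence correspond under $J$. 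Now (d) is exactly Theorem~\ref{thm10}: when $E$ and $F$ are locally convex metrizable and $m\in\N$, $\Psi$ satisfies conditions (1), (2), (3$^{\prime}$) of that theorem and is therefore continuous, whence $\Phi$ is continuous. The continuity assertions in (e) follow by applying Theorem~\ref{thm11} to $\Psi$ and $\widetilde{T}$: part (1) gives continuity of $\widetilde{T}$, hence of $T=J^{-1}\widetilde{T}$, for the stated complete linear metric topologies (transported through $J$ on the $F$-side); and part (2) gives continuity of each $\Psi_{\lambda}(x,u)=(\partial^{\lambda}\widetilde{T}\mathbf{u})(x)$. To pass from $\Psi_{\lambda}$ back to $\Phi_{\lambda}(y,u)=(\partial^{\lambda}T\mathbf{u})(y)$, note that $T\mathbf{u}=(\widetilde{T}\mathbf{u})\circ h$, so by the chain rule $\Phi_{\lambda}$ is a fixed-continuous-coefficient combination of the maps $(y,u)\mapsto\Psi_{\mu}(h(y),u)$ with $|\mu|\le|\lambda|$, and is therefore continuous; the case $\lambda=0$ simultaneously records the continuity of $\Phi$ when $m=\infty$.

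Finally, to obtain the isomorphism $E\cong F$ in (d) and (e) I would fix $y_{0}\in Y$, so that $S_{y_{0}}=\Phi(y_{0},\cdot)$ is a vector space isomorphism which is continuous because $\Phi$ is. To see that $S_{y_{0}}^{-1}$ is continuous, I would run the same argument for $T^{-1}:C^{m}(Y,F)\to C^{m}(X,E)$, which is again a vector space isomorphism preserving common zeros; Theorems~\ref{thm8}, \ref{thm10} and \ref{thm11} then furnish a continuous associated map $\Phi':X\times F\to E$, and a short computation with the identity $f=T^{-1}(Tf)$ applied to constant functions shows that, for the appropriate $x_{0}$, $\Phi'(x_{0},\cdot)=S_{y_{0}}^{-1}$. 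Hence $S_{y_{0}}$ is a topological isomorphism of $E$ onto $F$. The only genuinely non-routine points are the transport of the complete linear metric topology through $J$ in (e) together with the chain-rule bookkeeping needed to translate Theorem~\ref{thm11}(2) back to the $\Phi_{\lambda}$, and the symmetric argument on $T^{-1}$ controlling the inverse operators; everything else is a direct appeal to \S4 and to Theorems~\ref{thm10} and \ref{thm11}.
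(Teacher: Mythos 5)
Your proposal is correct and follows essentially the same route as the paper, which presents Theorem \ref{Unified} explicitly as a summary assembled from Theorem \ref{thm8}, the normalization via the composition with $J$ at the start of \S 5, and Theorems \ref{thm10} and \ref{thm11}. You in fact supply two details the paper leaves implicit --- the chain-rule transfer of the continuity of $\Psi_{\mu}$ back to $\Phi_{\lambda}$ for $|\lambda|>0$, and the argument via $T^{-1}$ showing $S_{y}^{-1}$ is continuous so that $E$ and $F$ are topologically isomorphic --- and both are handled correctly.
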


Example \ref{Exm} demonstrates that neither the continuity of $T$ nor that of
$\Phi_{\lambda},$ $\left\vert \lambda\right\vert >0,$ is guaranteed without
the completeness of $E$ and $F.$ Similarly, Example \ref{Example_infinity}
shows that the the continuity of $\Phi$ itself is not guaranteed if $m=\infty$
and $E$ and $F$ are not assumed to be complete. Denote by $c_{00}$ the space
of all finitely supported real sequences endowed with the sup-norm.

\begin{ex}
\label{Exm}For any $m\in{\mathbb{N}}$, there is a map $\Phi:{\mathbb{R}}\times
c_{00}\rightarrow c_{00}$ such that the induced map $T$ given by
$Tf(x)=\Phi(x,f(x))$ is a linear bijection from $C^{m}({\mathbb{R}},c_{00})$
onto itself. However, there is a sequence of functions $(f_{n})$ in
$C^{m}({\mathbb{R}},c_{00})$ so that $(f_{n}^{(k)})_{n}$ converges uniformly
to $0$ for all $k\leq m$, but $((Tf_{n})^{(k)}(0))_{n}$ does not converge to
$0$ for any $k$, $1\leq k\leq m$. Furthermore, the maps $\Phi_{k}(0,\cdot)$,
$1\leq k\leq m$, defined as in part (e) Theorem \ref{Unified}, are not bounded.
\end{ex}

If $f\in$ $C^{m}({\mathbb{R}},c_{00}),$ write $f=\sum f_{i}\mathbf{e}_{i},$
where $\left(  e_{i}\right)  $ is the unit vector basis of $c_{00}$ and
$f_{i}\in C^{m}({\mathbb{R)}}$.

\begin{lem}
\label{LemmaExample_inf_1}If $f\in$ $C^{m}({\mathbb{R}},c_{00}),$ then for all
$x_{0}\in{\mathbb{R}}\ $and $0\leq k\leq m,$ there exists $i_{0}=i_{0}\left(
x_{0}\right)  \in\mathbb{N}$ such that
\[
\lim_{x\rightarrow x_{0}}\sup_{i\geq i_{0}}\frac{|f_{i}^{\left(  k\right)
}\left(  x\right)  |}{\left\vert x-x_{0}\right\vert ^{m-k}}=0.
\]

\end{lem}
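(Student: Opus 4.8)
The plan is to pick $i_{0}=i_{0}(x_{0})$ large enough to contain the supports of \emph{all} the derivative vectors $f^{(j)}(x_{0})$, $0\le j\le m$, and then run a uniform Taylor estimate on the tail coordinates. First I would record that, since $f\in C^{m}(\mathbb{R},c_{00})$, every derivative $f^{(j)}$, $0\le j\le m$, is a continuous map of $\mathbb{R}$ into $c_{00}$, and $f^{(j)}(x)=\sum_{i}f_{i}^{(j)}(x)\mathbf{e}_{i}$ with $f_{i}\in C^{m}(\mathbb{R})$; in particular each $f^{(j)}(x_{0})$ is finitely supported. Hence there is $i_{0}=i_{0}(x_{0})\in\mathbb{N}$, \emph{independent of $k$}, with $f_{i}^{(j)}(x_{0})=0$ for all $i\ge i_{0}$ and all $0\le j\le m$.

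For $k<m$, fix $i\ge i_{0}$ and $x\neq x_{0}$ and apply Taylor's theorem with Lagrange remainder to the scalar function $f_{i}^{(k)}\in C^{m-k}(\mathbb{R})$ on the interval with endpoints $x_{0}$ and $x$. Since $f_{i}^{(k+j)}(x_{0})=0$ for $0\le j\le m-k$ (note $k+j\le m$), the polynomial part vanishes and one is left with
\[
f_{i}^{(k)}(x)=\frac{f_{i}^{(m)}(\xi_{i,x})}{(m-k)!}\,(x-x_{0})^{m-k}
\]
for some $\xi_{i,x}$ strictly between $x_{0}$ and $x$, so that $|f_{i}^{(k)}(x)|/|x-x_{0}|^{m-k}=|f_{i}^{(m)}(\xi_{i,x})|/(m-k)!$. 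The case $k=m$ is immediate: $|f_{i}^{(m)}(x)|/|x-x_{0}|^{0}=|f_{i}^{(m)}(x)|$, so one may simply take $\xi_{i,x}=x$ and the same identity holds, with $(m-k)!=0!=1$.

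Finally I would use that convergence in $c_{00}$ is convergence in the sup-norm, hence uniform over coordinates. Given $\varepsilon>0$, continuity of $f^{(m)}:\mathbb{R}\to c_{00}$ at $x_{0}$ furnishes $\delta>0$ with $\|f^{(m)}(t)-f^{(m)}(x_{0})\|_{\infty}<\varepsilon$ whenever $|t-x_{0}|<\delta$; since $f_{i}^{(m)}(x_{0})=0$ for $i\ge i_{0}$, this says $\sup_{i\ge i_{0}}|f_{i}^{(m)}(t)|<\varepsilon$ for $|t-x_{0}|<\delta$. For $0<|x-x_{0}|<\delta$ each remainder point obeys $|\xi_{i,x}-x_{0}|\le|x-x_{0}|<\delta$, so
\[
\sup_{i\ge i_{0}}\frac{|f_{i}^{(k)}(x)|}{|x-x_{0}|^{m-k}}=\sup_{i\ge i_{0}}\frac{|f_{i}^{(m)}(\xi_{i,x})|}{(m-k)!}\le\frac{\varepsilon}{(m-k)!},
\]
which gives the claimed limit. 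There is no serious obstacle; the only point needing care is that the Lagrange remainder point $\xi_{i,x}$ depends on the coordinate $i$, but since it is always trapped between $x_{0}$ and $x$, a single modulus-of-continuity estimate for $f^{(m)}$ at $x_{0}$ controls all coordinates at once — which is precisely where the sup-norm structure of $c_{00}$ enters.
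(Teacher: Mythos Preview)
Your proof is correct and follows essentially the same route as the paper: choose $i_{0}$ so that $f_{i}^{(j)}(x_{0})=0$ for all $i\ge i_{0}$ and $0\le j\le m$, apply Taylor's theorem with Lagrange remainder coordinatewise to reduce to $|f_{i}^{(m)}(\xi_{i,x})|/(m-k)!$, and finish with the sup-norm continuity of $f^{(m)}$ at $x_{0}$. Your explicit treatment of the case $k=m$ and your remark that $\xi_{i,x}$ depends on $i$ but stays trapped in $(x_{0}-\delta,x_{0}+\delta)$ are useful clarifications that the paper leaves implicit.
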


\begin{proof}
Let $x_{0}\in{\mathbb{R}}$ and $0\leq k\leq m$. Since $f\in C^{m}({\mathbb{R}%
},c_{00})$, for all $0\leq j\leq m,$
\begin{equation}
\lim_{x\rightarrow x_{0}}\sup_{i}|f_{i}^{\left(  j\right)  }\left(  x\right)
-f_{i}^{\left(  j\right)  }\left(  x_{0}\right)  |=0. \label{Ex1Eq1}%
\end{equation}
Since $f^{\left(  j\right)  }\left(  x_{0}\right)  \in c_{00}$ for all $j\leq
m,$ there exists $i_{0}\in\mathbb{N}$ such that $f_{i}^{\left(  j\right)
}\left(  x_{0}\right)  =0$ for all $i\geq i_{0}$ and $0\leq j\leq m.$ For
$i\geq i_{0},$ $x\neq x_{0},$ there exists, by Taylor's Theorem, $\xi
=\xi\left(  i,x\right)  \ $satisfying $0<\left\vert \xi-x_{0}\right\vert
<\left\vert x-x_{0}\right\vert ,$ such that
\begin{align*}
f_{i}^{\left(  k\right)  }\left(  x\right)   &  =f_{i}^{\left(  k\right)
}\left(  x_{0}\right)  +\cdots+\tfrac{f_{i}^{\left(  m-1\right)  }\left(
x_{0}\right)  }{\left(  m-1-k\right)  !}\left(  x-x_{0}\right)  ^{m-1-k}%
+\tfrac{f_{i}^{\left(  m\right)  }\left(  \xi\right)  }{\left(  m-k\right)
!}\left(  x-x_{0}\right)  ^{m-k}\\
&  =\frac{f_{i}^{\left(  m\right)  }\left(  \xi\right)  }{\left(  m-k\right)
!}\left(  x-x_{0}\right)  ^{m-k}.
\end{align*}
Thus%
\[
\frac{|f_{i}^{\left(  k\right)  }\left(  x\right)  |}{\left\vert
x-x_{0}\right\vert ^{m-k}}=\frac{|f_{i}^{\left(  m\right)  }\left(
\xi\right)  |}{\left(  m-k\right)  !}.
\]
By (\ref{Ex1Eq1}), for any $\varepsilon>0,$ there exists $\delta>0$ such that
\[
\sup_{0<\left\vert x-x_{0}\right\vert <\delta}\sup_{i\geq i_{0}}%
|f_{i}^{\left(  m\right)  }\left(  x\right)  |=\sup_{0<\left\vert
x-x_{0}\right\vert <\delta}\sup_{i\geq i_{0}}|f_{i}^{\left(  m\right)
}\left(  x\right)  -f_{i}^{\left(  m\right)  }\left(  x_{0}\right)
|<\varepsilon.
\]
Therefore, for $0<\left\vert x-x_{0}\right\vert <\delta,$
\[
\sup_{i\geq i_{0}}\frac{|f_{i}^{\left(  k\right)  }\left(  x\right)
|}{\left\vert x-x_{0}\right\vert ^{m-k}}<\frac{\varepsilon}{\left(
m-k\right)  !}.
\]

\end{proof}

\begin{lem}
\label{ConditionforCmMap}Let $\left(  \varphi_{i}\right)  $ be a sequence in
$C^{\infty}\left(  \mathbb{R}\right)  .$ If, for any $x_{0}\in\mathbb{R}$ and
$0\leq j\leq m,$ there exists $i_{0}\in\mathbb{N}$ such that
\[
\limsup_{x\rightarrow x_{0}}\sup_{i\geq i_{0}}|\varphi_{i}^{\left(  j\right)
}\left(  x\right)  ||x-x_{0}|^{j}=M<\infty,
\]
then $\sum f_{i}\varphi_{i}\mathbf{e}_{i}\in C^{m}\left(  \mathbb{R}%
,c_{00}\right)  $ whenever $\sum f_{i}\mathbf{e}_{i}\in C^{m}\left(
\mathbb{R},c_{00}\right)  .$
\end{lem}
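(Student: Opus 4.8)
The plan is to show that the coordinatewise derivatives of $g:=\sum_i f_i\varphi_i\mathbf{e}_i$ assemble into a $c_{00}$-valued $C^m$ map. First, $g$ is well defined into $c_{00}$: since $f(x)\in c_{00}$, only finitely many $f_i(x)$ are nonzero, hence so is $g(x)=(f_i(x)\varphi_i(x))_i$. Each scalar coordinate $g_i=f_i\varphi_i$ lies in $C^m(\mathbb{R})$ (a product of a $C^m$ and a $C^\infty$ function), and by the Leibniz rule $g_i^{(k)}=\sum_{l=0}^k\binom{k}{l}f_i^{(l)}\varphi_i^{(k-l)}$. Since $f^{(l)}(x)\in c_{00}$ for every $l\le m$, only finitely many coordinates of $G_k(x):=(g_i^{(k)}(x))_i$ are nonzero, so $G_k$ maps $\mathbb{R}$ into $c_{00}$ for $0\le k\le m$. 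It then suffices to prove: (i) each $G_k$ is continuous as a map $\mathbb{R}\to c_{00}$; and (ii) $G_k'=G_{k+1}$ in the $c_{00}$-valued sense for $0\le k<m$. Granting these, $g=G_0\in C^m(\mathbb{R},c_{00})$ with $g^{(k)}=G_k$.

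Step (ii) is a formal consequence of (i). Fix $x$; since $g_i^{(k)}\in C^1(\mathbb{R})$ (as $k+1\le m$), the scalar identity
\[
g_i^{(k)}(x+h)-g_i^{(k)}(x)-h\,g_i^{(k+1)}(x)=\int_x^{x+h}\bigl(g_i^{(k+1)}(t)-g_i^{(k+1)}(x)\bigr)\,dt
\]
gives $\|G_k(x+h)-G_k(x)-hG_{k+1}(x)\|_\infty\le |h|\sup_{|t-x|\le|h|}\|G_{k+1}(t)-G_{k+1}(x)\|_\infty$, which is $o(|h|)$ by the continuity of $G_{k+1}$ at $x$. Hence $G_k$ is differentiable with $G_k'(x)=G_{k+1}(x)$, and iterating gives $g\in C^m$ once (i) is known.

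The main work is step (i): for every $x_0$ and every $k\le m$, $\lim_{x\to x_0}\|G_k(x)-G_k(x_0)\|_\infty=0$. By Leibniz it suffices to show, for each pair $l,s$ with $l+s\le m$, that $x\mapsto(f_i^{(l)}(x)\varphi_i^{(s)}(x))_i$ is continuous into $c_{00}$ at $x_0$. Applying Lemma \ref{LemmaExample_inf_1} to $f$ and the hypothesis of the lemma to $(\varphi_i)$ — in each case taking a single index that works for all derivative orders $0\le j\le m$ — we obtain $N=N(x_0)$ and $M'<\infty$ so that for all $i\ge N$ and all $0\le j\le m$: $f_i^{(j)}(x_0)=0$; $\sup_{i\ge N}|f_i^{(j)}(x)|/|x-x_0|^{m-j}\to 0$ as $x\to x_0$; and $|\varphi_i^{(j)}(x)|\,|x-x_0|^j\le M'$ for $x$ in some punctured neighborhood of $x_0$. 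For the finitely many $i<N$, the scalar functions $f_i^{(l)}\varphi_i^{(s)}$ are continuous, so the corresponding partial supremum of $|f_i^{(l)}(x)\varphi_i^{(s)}(x)-f_i^{(l)}(x_0)\varphi_i^{(s)}(x_0)|$ tends to $0$. For $i\ge N$ the value at $x_0$ vanishes, and for $0<|x-x_0|<1$,
\[
|f_i^{(l)}(x)\varphi_i^{(s)}(x)|\le \Bigl(\sup_{i\ge N}\frac{|f_i^{(l)}(x)|}{|x-x_0|^{m-l}}\Bigr)M'\,|x-x_0|^{m-l-s}\le M'\sup_{i\ge N}\frac{|f_i^{(l)}(x)|}{|x-x_0|^{m-l}},
\]
using $m-l-s=m-k\ge 0$; the last quantity is independent of $i$ and tends to $0$ as $x\to x_0$. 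Since $\sup_i(\cdot)$ equals the maximum of the two partial suprema, it tends to $0$, proving continuity of $x\mapsto(f_i^{(l)}(x)\varphi_i^{(s)}(x))_i$. Summing over $l+s=k$ with the Leibniz coefficients yields (i). The delicate point — and the expected main obstacle — is the bookkeeping that produces a single index $N(x_0)$ handling simultaneously the vanishing of $f_i^{(j)}(x_0)$, the rate estimate for $(f_i)$, and the boundedness estimate for $(\varphi_i)$, uniformly across all orders $j\le m$; once that is arranged, the displayed estimate (which crucially uses $l+s\le m$, so that no negative power of $|x-x_0|$ ever appears) closes the argument.
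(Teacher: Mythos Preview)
Your proof is correct and follows the same approach as the paper: decompose via Leibniz and combine the rate estimate from Lemma~\ref{LemmaExample_inf_1} with the hypothesis on $(\varphi_i)$ to show each term $(f_i^{(l)}\varphi_i^{(s)})_i$ has uniformly vanishing tail at $x_0$. Your write-up is in fact more complete than the paper's, which treats only the top order $j+k=m$ explicitly and omits your step (ii) passing from continuity of the coordinatewise derivatives $G_k$ to genuine $c_{00}$-valued $C^m$ regularity.
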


\begin{proof}
Let $\varepsilon>0$ and $0\leq j,k\leq m$ with $j+k=m.$ According to the
hypothesis on $\left(  \varphi_{i}\right)  $ and Lemma
\ref{LemmaExample_inf_1}, there exists $\delta>0$ and $i_{1}\in\mathbb{N}$
such that
\[
|\varphi_{i}^{\left(  j\right)  }\left(  x\right)  ||x-x_{0}|^{j}<M+1\text{
and }\frac{|f_{i}^{\left(  k\right)  }\left(  x\right)  |}{\left\vert
x-x_{0}\right\vert ^{j}}<\varepsilon
\]
whenever $0<\left\vert x-x_{0}\right\vert <\delta$ and $i\geq i_{1}.$ Hence
\[
\lim_{x\rightarrow x_{0}}\sup_{i\geq i_{1}}|f_{i}^{\left(  k\right)  }\left(
x\right)  \varphi_{i}^{\left(  j\right)  }\left(  x\right)  |=0.
\]
We may also assume that $f_{i}^{\left(  \ell\right)  }\left(  x_{0}\right)
=0$ for $i\geq i_{1}$ and $0\leq\ell\leq m.$ Thus
\[
\lim_{x\rightarrow x_{0}}\sup_{i\geq i_{1}}|f_{i}^{\left(  k\right)  }\left(
x\right)  \varphi_{i}^{\left(  j\right)  }\left(  x\right)  -f_{i}^{\left(
k\right)  }\left(  x_{0}\right)  \varphi_{i}^{\left(  j\right)  }\left(
x_{0}\right)  |=0.
\]
Since $f_{i}^{\left(  k\right)  }\varphi_{i}^{\left(  j\right)  }$ is
continuous for $1\leq i<i_{1},$
\[
\lim_{x\rightarrow x_{0}}\sup_{i}|f_{i}^{\left(  k\right)  }\left(  x\right)
\varphi_{i}^{\left(  j\right)  }\left(  x\right)  -f_{i}^{\left(  k\right)
}\left(  x_{0}\right)  \varphi_{i}^{\left(  j\right)  }\left(  x_{0}\right)
|=0.
\]
It follows that $\sum\left(  f_{i}\varphi_{i}\right)  ^{\left(  m\right)
}\mathbf{e}_{i}\in C\left(  \mathbb{R},c_{00}\right)  .$
\end{proof}

\begin{proof}
[Proof of Example \ref{Exm}]Let $\varphi$ be a $C^{\infty}$-bump function. For
all $j\in\mathbb{N}\cup\left\{  0\right\}  ,$ let $C_{j}=\left\Vert
\varphi^{(j)}\right\Vert _{\infty}.$ Set $\varphi_{i}\left(  x\right)
=i\varphi\left(  ix\right)  x^{k}+2$ if $i\in\mathbb{N}_{k}=m\mathbb{N}%
+k,1\leq k\leq m.$ Note that
\begin{equation}
\left\vert \varphi\left(  ix\right)  \right\vert \leq\chi_{\left[  -\frac
{1}{i},\frac{1}{i}\right]  }\left(  x\right)  .\label{bump}%
\end{equation}
In particular $\varphi_{i}\left(  x\right)  \geq$ $1$ for all $x.$ Thus
$\psi_{i}\left(  x\right)  =\frac{1}{\varphi_{i}\left(  x\right)  }$ is
well-defined for $i\in\mathbb{N}$. We claim that for $x_{0}\in\mathbb{R}$ and
$0\leq j\leq m$, there exists $i_{0}\in\mathbb{N}$ such that

\begin{enumerate}
\item $\limsup\limits_{x\rightarrow x_{0}}\sup_{i\geq i_{0}}|\varphi
_{i}^{\left(  j\right)  }\left(  x\right)  ||x-x_{0}|^{j}<\infty,$

\item $\limsup\limits_{x\rightarrow x_{0}}\sup_{i\geq i_{0}}|\psi_{i}^{\left(
j\right)  }\left(  x\right)  ||x-x_{0}|^{j}<\infty.$
\end{enumerate}

If $x_{0}\neq0,$ there exists $i_{0}$ and a neighborhood $U$ of $x_{0}$ such
that $\varphi_{i}$ and $\psi_{i}$ are constant on $U$ for all $i>i_{0}.$ Hence
(1) and (2) hold. Assume that $x_{0}=0$. It follows from (\ref{bump}) that
$1\leq\left\vert \varphi_{i}\left(  x\right)  \right\vert \leq C_{0}\left\vert
x\right\vert ^{k-1}+2$ if $i\in\mathbb{N}_{k}$. Therefore (1) and (2) hold for
$j=0.$ Let $1\leq j\leq m$ be fixed. If $i\in\mathbb{N}_{k},$ then%

\[
\varphi_{i}^{\left(  j\right)  }\left(  x\right)  =i%
%TCIMACRO{\dsum _{\ell=0}^{j\wedge k}}%
%BeginExpansion
{\displaystyle\sum_{\ell=0}^{j\wedge k}}
%EndExpansion
\left(
\begin{array}
[c]{c}%
j\\
\ell
\end{array}
\right)  \left[  k\left(  k-1\right)  \cdots\left(  k-\ell+1\right)
x^{k-\ell}\right]  \left[  i^{j-\ell}\varphi^{\left(  j-\ell\right)  }\left(
ix\right)  \right]  .
\]
Thus%
\begin{align*}
|\varphi_{i}^{\left(  j\right)  }\left(  x\right)  |\left\vert x\right\vert
^{j}  &  \leq%
%TCIMACRO{\dsum _{\ell=0}^{j\wedge k}}%
%BeginExpansion
{\displaystyle\sum_{\ell=0}^{j\wedge k}}
%EndExpansion
\left(
\begin{array}
[c]{c}%
j\\
\ell
\end{array}
\right)  k^{\ell}\left\vert x\right\vert ^{k-1}\left\vert \varphi^{\left(
j-\ell\right)  }\left(  ix\right)  \right\vert \left\vert ix\right\vert
^{j-\ell+1}\\
&  \leq m^{m}\left\vert x\right\vert ^{k-1}%
%TCIMACRO{\dsum _{\ell=0}^{j\wedge k}}%
%BeginExpansion
{\displaystyle\sum_{\ell=0}^{j\wedge k}}
%EndExpansion
\left(
\begin{array}
[c]{c}%
j\\
\ell
\end{array}
\right)  C_{j-\ell}.
\end{align*}
The last inequality holds because $\varphi^{\left(  j-\ell\right)  }\left(
ix\right)  =0$ when $\left\vert ix\right\vert >1.$ This proves that (1) is
satisfied for $1\leq j\leq m.$

We prove (2) by induction on $j$. Assume that (2) is satisfied for all $j\leq
j_{0}$ where $j_{0}\leq m-1.$ Note that
\[
0=\left(  \varphi_{i}\psi_{i}\right)  ^{\left(  j_{0}+1\right)  }=%
%TCIMACRO{\dsum _{\ell=0}^{j_{0}+1}}%
%BeginExpansion
{\displaystyle\sum_{\ell=0}^{j_{0}+1}}
%EndExpansion
\left(
\begin{array}
[c]{c}%
j_{0}+1\\
\ell
\end{array}
\right)  \varphi_{i}^{\left(  \ell\right)  }\psi_{i}^{\left(  j_{0}%
+1-\ell\right)  }.
\]
Therefore,%
\[
\psi_{i}^{\left(  j_{0}+1\right)  }\left(  x\right)  =\frac{-1}{\varphi
_{i}\left(  x\right)  }%
%TCIMACRO{\dsum _{\ell=1}^{j_{0}+1}}%
%BeginExpansion
{\displaystyle\sum_{\ell=1}^{j_{0}+1}}
%EndExpansion
\left(
\begin{array}
[c]{c}%
j_{0}+1\\
\ell
\end{array}
\right)  \varphi_{i}^{\left(  \ell\right)  }\left(  x\right)  \psi
_{i}^{\left(  j_{0}+1-\ell\right)  }\left(  x\right)  .
\]
Since $\left\vert \frac{1}{\varphi_{i}\left(  x\right)  }\right\vert \leq1$,%
\[
|\psi_{i}^{\left(  j_{0}+1\right)  }\left(  x\right)  |\left\vert x\right\vert
^{j_{0}+1}\leq%
%TCIMACRO{\dsum _{\ell=1}^{j_{0}+1}}%
%BeginExpansion
{\displaystyle\sum_{\ell=1}^{j_{0}+1}}
%EndExpansion
\left(
\begin{array}
[c]{c}%
j_{0}+1\\
\ell
\end{array}
\right)  |\varphi_{i}^{\left(  \ell\right)  }\left(  x\right)  ||x|^{\ell
}|\psi_{i}^{\left(  j_{0}+1-\ell\right)  }\left(  x\right)  ||x|^{j_{0}%
+1-\ell}.
\]
The desired inequality (2), with $j_{0}+1$ in place of $j,$ follows from (1)
and the inductive hypothesis.

By (1), (2) and Lemma \ref{ConditionforCmMap}, the map $T:C^{m}({\mathbb{R}%
},c_{00})\rightarrow C^{m}({\mathbb{R}},c_{00}),$ $\sum f_{i}\mathbf{e}%
_{i}\mapsto\sum\varphi_{i}f_{i}\mathbf{e}_{i}$ is a linear bijection. Finally,
the sequence $\left(  f_{n}\right)  =\left(  \frac{1}{n}\mathbf{e}_{n}\right)
$ in $C^{m}({\mathbb{R}},c_{00})$ and the sequences of derivatives $( f_{n}^{(
k) })_{n}$, $k\leq m$, converge to $0$ uniformly. However, for $1\leq k\leq
m,$ $\left(  Tf_{n}\right)  ^{\left(  k\right)  }\left(  0\right)  =k!e_{n}$
whenever $n\in\mathbb{N}_{k}.$ Hence $\lim\limits_{n\rightarrow\infty}\Phi
_{k}\left(  0,\frac{e_{n}}{n}\right)  =\lim\limits_{n\rightarrow\infty}%
Tf_{n}^{\left(  k\right)  }\left(  0\right)  \neq0.$ In particular, $\Phi
_{k}\left(  0,\cdot\right)  $ is not bounded.
\end{proof}

\begin{ex}
\label{Example_infinity}There is a map $\Phi:{\mathbb{R}}\times c_{00}%
\rightarrow c_{00}$ such that the induced map $T$ given by $Tf(x)=\Phi
(x,f(x))$ is a linear bijection from $C^{\infty}({\mathbb{R}},c_{00})$ onto
itself. However, $\Phi_{k}(0,\cdot)$ is not bounded for any $k\in{\mathbb{N}%
}\cup\{0\}$ and there is a sequence of functions $(f_{n})$ in $C^{\infty
}({\mathbb{R}},c_{00})$ so that $(f_{n}^{(k)})_{n}$ converges uniformly to $0$
for all $k\in{\mathbb{N}}\cup\{0\}$, but $((Tf_{n})^{(k)}(0))$ does not
converge to $0$ for any $k\in{\mathbb{N}}\cup\{0\}$.
\end{ex}

\begin{lem}
\label{ConditionforCinftyMap}Let $\left(  \varphi_{i}\right)  $ be a sequence
in $C^{\infty}\left(  \mathbb{R}\right)  .$ If for any $x_{0}\in\mathbb{R}$
and $k\in\mathbb{N}\cup\left\{  0\right\}  ,$ there exists $i_{0}=i_{0}\left(
k\right)  \in\mathbb{N}$ such that
\[
\limsup\limits_{x\rightarrow x_{0}}\sup_{i\geq i_{0}}|\varphi_{i}^{\left(
k\right)  }\left(  x\right)  |\left\vert x-x_{0}\right\vert ^{k+1}<\infty,
\]
then $\sum f_{i}\varphi_{i}\mathbf{e}_{i}\in C^{\infty}\left(  \mathbb{R}%
,c_{00}\right)  $ whenever $\sum f_{i}\mathbf{e}_{i}\in C^{\infty}\left(
\mathbb{R},c_{00}\right)  .$
\end{lem}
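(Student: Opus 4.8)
The plan is to follow the proof of Lemma~\ref{ConditionforCmMap} almost verbatim, the only new ingredient being that the additional smoothness of $f$ buys one extra derivative in the application of Lemma~\ref{LemmaExample_inf_1}. Write $f=\sum f_{i}\mathbf{e}_{i}$ and set $g=\sum f_{i}\varphi_{i}\mathbf{e}_{i}$; this is well defined because $f(x)\in c_{00}$ forces all but finitely many $f_{i}(x)$, hence all but finitely many $f_{i}(x)\varphi_{i}(x)$, to vanish. Since $C^{\infty}(\mathbb{R},c_{00})=\bigcap_{n\in\mathbb{N}}C^{n}(\mathbb{R},c_{00})$, it is enough to prove that for every $n\in\mathbb{N}$ the formal derivative series $\sum_{i}(f_{i}\varphi_{i})^{(n)}\mathbf{e}_{i}$ belongs to $C(\mathbb{R},c_{00})$; letting $n$ range over $\mathbb{N}$ then places $g$ in $C^{\infty}(\mathbb{R},c_{00})$, exactly as in the concluding step of Lemma~\ref{ConditionforCmMap}.

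So I would fix $n\in\mathbb{N}$ and $x_{0}\in\mathbb{R}$, expand $(f_{i}\varphi_{i})^{(n)}=\sum_{j=0}^{n}\binom{n}{j}f_{i}^{(n-j)}\varphi_{i}^{(j)}$ by Leibniz's rule, and reduce to showing, for each fixed $j$ with $0\le j\le n$, that
\[
\lim_{x\to x_{0}}\ \sup_{i}\bigl|f_{i}^{(n-j)}(x)\varphi_{i}^{(j)}(x)-f_{i}^{(n-j)}(x_{0})\varphi_{i}^{(j)}(x_{0})\bigr|=0 .
\]
The hypothesis on $(\varphi_{i})$ at order $j$ supplies $i_{0}\in\mathbb{N}$, $\delta_{1}>0$ and $M<\infty$ with $|\varphi_{i}^{(j)}(x)|\,|x-x_{0}|^{j+1}\le M+1$ for all $i\ge i_{0}$ and $0<|x-x_{0}|<\delta_{1}$. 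On the other side, since $f\in C^{\infty}(\mathbb{R},c_{00})\subseteq C^{n+1}(\mathbb{R},c_{00})$, Lemma~\ref{LemmaExample_inf_1} applied with $n+1$ in place of $m$ and with $k=n-j$ (so that $m-k=j+1$) produces $i_{1}\in\mathbb{N}$ with $\lim_{x\to x_{0}}\sup_{i\ge i_{1}}|f_{i}^{(n-j)}(x)|/|x-x_{0}|^{j+1}=0$. Multiplying the two estimates shows $\sup_{i\ge\max(i_{0},i_{1})}|f_{i}^{(n-j)}(x)\varphi_{i}^{(j)}(x)|\to 0$ as $x\to x_{0}$.

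From here the bookkeeping is identical to Lemma~\ref{ConditionforCmMap}: after enlarging $i_{1}$ I may assume $f_{i}^{(\ell)}(x_{0})=0$ for all $i\ge i_{1}$ and $0\le\ell\le n$ (possible because each $f^{(\ell)}(x_{0})\in c_{00}$), so the subtracted term $f_{i}^{(n-j)}(x_{0})\varphi_{i}^{(j)}(x_{0})$ vanishes for large $i$ and the tail supremum over $i\ge\max(i_{0},i_{1})$ tends to $0$, while the remaining finitely many indices are handled by continuity of each $f_{i}^{(n-j)}\varphi_{i}^{(j)}$. Summing over $j$ gives $\sum_{i}(f_{i}\varphi_{i})^{(n)}\mathbf{e}_{i}\in C(\mathbb{R},c_{00})$, and, $n$ being arbitrary, $g\in C^{\infty}(\mathbb{R},c_{00})$.

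The one point to watch — and the reason this is not merely a corollary of Lemma~\ref{ConditionforCmMap} — is the arithmetic of the exponents. The present hypothesis allows $\varphi_{i}^{(j)}$ to blow up like $|x-x_{0}|^{-(j+1)}$, one power worse than in the $C^{m}$ setting, and this is tolerable precisely because $f\in C^{\infty}$ lets Lemma~\ref{LemmaExample_inf_1} be invoked with $m=n+1$ rather than $m=n$, yielding the decay $|f_{i}^{(n-j)}(x)|=o(|x-x_{0}|^{j+1})$ that cancels the growth exactly. I do not anticipate any genuine difficulty beyond checking this matching; every other step transcribes the proof of Lemma~\ref{ConditionforCmMap}.
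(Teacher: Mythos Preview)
Your proposal is correct and follows essentially the same approach as the paper's proof. The paper's argument is terser but identical in substance: it invokes Lemma~\ref{LemmaExample_inf_1} (implicitly with $m$ large enough that $m-k=j+1$, exactly your choice $m=n+1$, $k=n-j$) to obtain $\lim_{x\to x_{0}}\sup_{i\ge i_{1}}|f_{i}^{(k)}(x)|/|x-x_{0}|^{j+1}=0$, multiplies by the hypothesis bound to get $\sup_{i\ge i_{1}}|f_{i}^{(k)}(x)\varphi_{i}^{(j)}(x)|\to 0$, and then defers to the concluding bookkeeping of Lemma~\ref{ConditionforCmMap}; your explicit exponent-matching commentary is precisely the point the paper leaves implicit.
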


\begin{proof}
Given $x_{0}\in\mathbb{R}$, $k,j\in\mathbb{N}\cup\left\{  0\right\}  ,$ it
follows from Lemma \ref{LemmaExample_inf_1} that there exists $i_{1}%
=i_{1}\left(  x_{0},k,j\right)  >i_{0}\left(  j\right)  $ such that
$f_{i}^{\left(  k\right)  }\left(  x_{0}\right)  =0$ if $i\geq i_{0}$ and
\[
\lim_{x\rightarrow x_{0}}\sup_{i\geq i_{1}}\frac{|f_{i}^{\left(  k\right)
}\left(  x\right)  |}{\left\vert x-x_{0}\right\vert ^{j+1}}=0.
\]
It follows readily that $\lim_{x\rightarrow x_{0}}\sup_{i\geq i_{1}}%
|f_{i}^{\left(  k\right)  }\left(  x\right)  \varphi_{i}^{\left(  j\right)
}\left(  x\right)  |=0.$ The remainder of the argument proceeds as in the
proof of Lemma \ref{ConditionforCmMap}.
\end{proof}

\begin{proof}
[Proof of Example \ref{Example_infinity}]Let $\varphi$ be a $C^{\infty}$-bump
function$.$ For all $j\in\mathbb{N}\cup\left\{  0\right\}  ,$ let
$C_{j}=\left\Vert \varphi^{(j)}\right\Vert _{\infty}.$ Let $\left(
\mathbb{N}_{k}\right)  _{k\in\mathbb{N}}$ be a partition of $\mathbb{N}$ into
infinite sets such that $i\geq k$ if $i\in\mathbb{N}_{k}.$ Set $\varphi
_{i}\left(  x\right)  =i\varphi\left(  ix\right)  x^{k-1}+2,$ if
$i\in\mathbb{N}_{k}$ and $\psi_{i}\left(  x\right)  =\frac{1}{\varphi
_{i}\left(  x\right)  }$. By arguments similar to those used in the proof of
Example \ref{Exm}, one can show that for all $x_{0}\in\mathbb{R}$ and
$j\in\mathbb{N\cup}\left\{  0\right\}  $, there exists $i_{0}\in\mathbb{N}$
such that

\begin{enumerate}
\item $\lim_{x\rightarrow x_{0}}\sup_{i\geq i_{0}}|\varphi_{i}^{\left(
j\right)  }\left(  x\right)  |\left\vert x-x_{0}\right\vert ^{j+1}<\infty,$

\item $\lim_{x\rightarrow x_{0}}\sup_{i\geq i_{0}}|\psi_{i}^{\left(  j\right)
}\left(  x\right)  |\left\vert x-x_{0}\right\vert ^{2j+1}<\infty.$
\end{enumerate}

It follows by Lemma \ref{ConditionforCinftyMap} that the map $T:C^{\infty
}({\mathbb{R}},c_{00})\rightarrow C^{\infty}({\mathbb{R}},c_{00}),$ $\sum
f_{i}\mathbf{e}_{i}\mapsto\sum\varphi_{i}f_{i}\mathbf{e}_{i},$ is a linear
bijection. For each $n,$ let $f_{n}=\frac{\mathbf{e}_{n}}{n}\in C^{\infty
}({\mathbb{R}},c_{00})$. Then $\left(  f_{n}^{\left(  k\right)  }\right)  $
converges uniformly to zero for all $k\in\mathbb{N}\cup\left\{  0\right\}  .$
However, $Tf_{n}\left(  x\right)  =\dfrac{\varphi_{n}\left(  x\right)
\mathbf{e}_{n}}{n}=\varphi\left(  nx\right)  x^{k-1}+\frac{2}{n}$ for all
$n\in\mathbb{N}_{k}.$ $\left\vert x\right\vert <1$. Recall that $\varphi
_{n}\left(  x\right)  =1$ if $\left\vert x\right\vert \leq\frac{1}{2n}.$
Therefore, for all $k\in\mathbb{N\cup}\left\{  0\right\}  $ and $n\in
\mathbb{N}_{k+1}$%
\[
\left(  Tf_{n}\right)  ^{\left(  k\right)  }\left(  0\right)  =\left\{
\begin{array}
[c]{ccc}%
1+2/n & \text{if} & k=0,\\
k! & \text{if} & k\geq1.
\end{array}
\right.
\]
Hence $\left(  Tf_{n}\right)  ^{\left(  k\right)  }\left(  0\right)  \ $does
not converge to $0$ for all $k.$ Since $\Phi_{k}\left(  0,\frac{e_{n}}%
{n}\right)  =\left(  Tf_{n}\right)  ^{\left(  k\right)  }\left(  0\right)  ,$
we also see that $\Phi_{k}\left(  0,\cdot\right)  $ is not bounded for any
$k\in\mathbb{N\cup}\left\{  0\right\}  .$
\end{proof}

\bigskip

\bigskip

\end{document}